\def\centerarc[#1](#2)(#3:#4:#5)
\newtheorem{theorem}{Theorem}[section]
\newtheorem{corollary}[theorem]{Corollary}
\newtheorem{lemma}[theorem]{Lemma}
\newtheorem{proposition}[theorem]{Proposition}
\newtheorem{remark}[theorem]{Remark}
\newtheorem*{ack}{Acknowledgments}
\numberwithin{equation}{section}
\newcommand{\C}{\mathbf{C}}
\newcommand{\R}{\mathbf{R}}
\newcommand{\mH}{\mathcal{H}}
\newcommand{\dd}{\partial \bar{\partial}}
\newcommand{\p}{\partial}
\newcommand{\cone}{\C_{\beta_1} \times \C_{\gamma}}
\newcommand{\coneb}{\C_{\beta} \times \C_{\gamma}}
\newcommand{\tz}{\tilde{z}}
\newcommand{\tw}{\tilde{w}}
\newcommand{\vol}{\mbox{vol}}
\newcommand{\mR}{\mathcal{R}}
\title[Three lines]{Calabi-Yau metrics with cone singularities along intersecting complex lines: the unstable case}
\author[M. de Borbon]{Martin de Borbon}
\address{King's College London}
\email{martin.deborbon@kcl.ac.uk}
\author[G. Edwards]{Gregory Edwards}
\address{University of Notre Dame}
\email{gedward2@nd.edu}
\begin{document}

\maketitle

\begin{abstract}
	We produce local Calabi-Yau metrics on \(\C^2\) with conical singularities along three or more complex lines through the origin whose cone angles strictly violate the Troyanov condition. The tangent cone at the origin is a flat K\"ahler cone with conical singularities along two intersecting lines: one with cone angle corresponding to the line with smallest cone angle, while the other forms as the collision of the remaining lines into a single conical line. Using a branched covering argument, we can construct Calabi-Yau metrics with cone singularities along cuspidal curves with cone angle in the unstable range.
\end{abstract}

\section{Introduction}

The geometry of Calabi-Yau metrics has received considerable attention since the seminal work of Yau \cite{yau}. In recent years such metrics, and the associated complex Monge-Amp\`ere equation, have been studied, for instance, on complete non-compact manifolds \cite{ChYa80,TiYau90,TiYau91,CoRo19,Li19,sz}, with singular volume form \cite{EyGuZe08,EGZ09,HeSu17,Ko98,Ko03}, and with conical singularities \cite{CGP,dBoSpo,GP,GuoSongI,GuoSongII}. The study of K\"ahler-Einstein metrics with conical singularities has been developed mainly in the normal crossing set-up. In this paper we investigate the more general not normal crossing case in the simplest situation of three or more complex lines meeting at a point.

We work on \(\C^2\) with standard complex coordinates \(z, w\). For $d\geq 3$, let \(L_1, \ldots, L_d\) be distinct complex lines through the origin with defining linear equations \(L_j = \{\ell_j =0\}\). 
Let \(0< \beta_1 \leq \beta_2 \leq \ldots \leq \beta_d < 1 \) satisfy
\begin{equation} \label{anglecond}
	(1-\beta_2) + \ldots + (1-\beta_d) < (1- \beta_1) .
\end{equation}
Write \(\C_{\beta}\) for the complex numbers endowed with the line element \(|z|^{\beta-1}|dz|\), which represents a cone of total angle \(2\pi\beta\). We now state our main result.

\begin{theorem}\label{MAINTTHM}
	Suppose $0 < \beta_1 \leq  \ldots \leq  \beta_d < 1$ satisfy Equation \eqref{anglecond}. Then there is a K\"ahler metric \(\omega_{CY}\) on a neighbourhood of \( 0 \in \C^2\) with the following properties: 
		\begin{enumerate}
			\item[(a)] it has cone angles \(2\pi\beta_j\) along \(L_j \setminus \{0\}\) for \(j=1, \ldots, d\);
			\item[(b)] it solves the Calabi-Yau equation \[\det(\omega_{CY}) = \prod_{j=1}^{d}|\ell_j|^{2\beta_j-2}\]
			in a neighbourhood of the origin;
			\item[(c)] its tangent cone at \(0\) is isometric to \( \C_{\beta_1} \times \C_{\gamma} \) where \(0< \gamma<1\) is determined by
				\begin{equation*}
					(1-\gamma) = (1-\beta_2) + \ldots + (1-\beta_d) .
				\end{equation*}
		\end{enumerate}	 
\end{theorem}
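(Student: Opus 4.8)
The plan is to solve the complex Monge-Amp\`ere equation by perturbing off an explicit fibered background metric whose determinant already matches the right-hand side to leading order and whose behaviour at the origin is exactly that of the product cone \(\cone\). After a linear change of variables I may assume \(L_1 = \{z=0\}\), so \(\ell_1 = z\), and that each remaining line is a graph \(L_j = \{w = m_j z\}\) with distinct slopes \(m_2, \dots, m_d\), so \(\ell_j = w - m_j z\). With this choice the target density factors as
\[ \prod_{j=1}^d |\ell_j|^{2\beta_j-2} = |z|^{2\beta_1-2}\,\prod_{j=2}^d |w-m_j z|^{2\beta_j-2}, \]
and I would read the two factors geometrically: \(|z|^{2\beta_1-2}\) is the density of the cone \(\C_{\beta_1}\) transverse to \(L_1\), while for each fixed \(z\) the factor \(\prod_{j\geq 2}|w-m_j z|^{2\beta_j-2}\) is the density of a flat metric on the fibre \(\C_w\) with cone points of angle \(2\pi\beta_j\) at \(w = m_j z\). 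By a Schwarz-Christoffel computation this fibre metric is asymptotic, as \(|w|\to\infty\), to the cone \(\C_\gamma\) with \((1-\gamma) = \sum_{j\geq 2}(1-\beta_j)\), which is precisely the \(\gamma\) of part (c).

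I would then assemble these into a K\"ahler potential \(\Phi_0 = \beta_1^{-2}|z|^{2\beta_1} + \Psi(z,w)\), where \(\Psi\) is a fibrewise potential for the flat metric above (so \(\p_w\p_{\bar w}\Psi = \prod_{j\geq 2}|w-m_j z|^{2\beta_j-2}\)), and set \(\omega_0 = \ddbar\Phi_0\). By design \(\omega_0\) has cone angle \(2\pi\beta_j\) along each \(L_j\setminus\{0\}\). A direct computation gives
\[ \det(\omega_0) = \prod_{j=1}^d |\ell_j|^{2\beta_j-2} + \bigl(\Psi_{z\bar z}\Psi_{w\bar w} - |\Psi_{z\bar w}|^2\bigr), \]
so the determinant equals the target times \((1+E)\), where the error \(E\) comes only from the base and mixed \(z\)-derivatives of \(\Psi\) and is concentrated near the transition region \(|w|\sim|z|\). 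Moreover, in the region \(|w|\gg|z|\) the leading part of \(\Phi_0\) near the origin is \(\beta_1^{-2}|z|^{2\beta_1} + c_0|w|^{2\gamma}\), the potential of \(\cone\). Thus \(\omega_0\) is an approximate solution with the correct tangent cone built in, and the remaining task is to correct it to an exact solution by solving \(\det(\omega_0 + \ddbar\phi) = \prod_j |\ell_j|^{2\beta_j-2}\) for a small potential \(\phi\).

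For this I would set up weighted H\"older spaces adapted to the two kinds of singularity: the transverse conical behaviour along each \(L_j\), and the product-cone behaviour at the origin, with a radial weight \(\delta\) measuring decay relative to the tangent-cone scale. The linearisation at \(\omega_0\) is the Laplacian \(\Laplace_{\omega_0}\), and the heart of the argument is to show that \(\Laplace_{\omega_0}\) is an isomorphism between the corresponding weighted spaces. Away from the origin this is standard elliptic theory for conical metrics; at the origin one must analyse the model Laplacian on \(\cone\), compute its indicial roots, and choose \(\delta\) so as to avoid them while still forcing \(\phi\) to decay strictly faster than \(|z|^{2\beta_1} + |w|^{2\gamma}\). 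Once invertibility is in place, a fixed-point argument exploiting the smallness of the nonlinear error \(E\) produces \(\phi\); its enforced decay guarantees that \(\phi\) alters neither the cone angles in (a) nor the tangent cone in (c), while (b) holds by construction.

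I expect the main obstacle to be the analysis at the origin. Because the tangent cone is a product of two cones of genuinely different angles \(\beta_1 \neq \gamma\), the dilation realising it as a metric cone is a \emph{weighted} \(\C^*\)-action rather than the standard scaling, so the indicial roots of \(\Laplace_{\omega_0}\) are governed by the Laplacian on the product \(\cone\) and not by a single cone spectrum; one must verify that the window of admissible weights \(\delta\) is non-empty throughout the full unstable range \eqref{anglecond}. Tied to this is the transition region near the origin where \(|w|\sim|z|\), the locus where the colliding lines \(L_2,\dots,L_d\) are still resolved and where \(\omega_0\) carries its largest error, across which the weighted estimates must be matched. Controlling these two features simultaneously, rather than the routine nonlinear iteration, is where the real work lies.
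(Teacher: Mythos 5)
Your approximate solution is essentially the paper's: the fibrewise potential \(\Psi\) with \(\p_w\p_{\bar w}\Psi=\prod_{j\geq 2}|w-m_jz|^{2\beta_j-2}\) is realized there as \(|z|^{2\gamma}\phi(z^{-1}w)\), where \(\phi\) is the Green's-function potential of the flat two-cone-point metric on \(\C\) (Section \ref{sect:FMETRIC}), glued by cutoffs to the product-cone potential \(|z|^{2\beta_1}+|w|^{2\gamma}\); your determinant identity and the identification of \(\gamma\) agree with Proposition \ref{propriccipot}. The divergence, and the gap, is in the linear theory. You propose weighted H\"older spaces with a single radial weight \(\delta\) at the origin, Fredholm theory via the indicial roots of the model Laplacian on \(\cone\), and ``standard elliptic theory for conical metrics away from the origin.'' This framework would fail: the approximate metric is \emph{not} asymptotically conical to \(\cone\) in the strong (weighted \(C^{k,\alpha}\)) sense such a scheme requires, because the lines \(L_2,\dots,L_d\) collide into the \(\gamma\)-line at the super-linear rate \(\rho^{\gamma/\beta_1}\). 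A point on \(L_2\) at distance \(\rho\) from the origin sees at least three separated scales (\(\rho\), the line-separation scale \(\rho^{\gamma/\beta_1}\), and the distance to \(L_2\)), and at intermediate scales the geometry is modelled not on \(\cone\) but on the whole hierarchy \(\C\times\C_\gamma\), \(\C\times\C_{\beta_j}\), \(\C^2\). Consequently the mapping properties of \(\Delta_{\omega_0}\) cannot be read off from the spectrum of \(\cone\) alone, and there is no off-the-shelf conical/edge theory for edges that merge; this is precisely why the paper replaces your step with the ball-comparison result (Proposition \ref{prop:MODELCONES}, six model cones with uniformly bounded sets of bad scales) and the Campanato-type Schauder estimate of Section \ref{sect:SCHAUDER}, whose monotonicity input (Lemma \ref{lem:epsilonmon}) requires avoiding indicial roots in \((2,2+\alpha]\) for \emph{all six} model cones (hence the restriction \(\alpha<1/\beta_3-1\)), not just for the tangent cone. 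You do flag this region as ``where the real work lies,'' but flagging it is not supplying it: the entire technical core of the theorem sits exactly there.

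Two smaller points. First, on a ball the kernel of \(\Delta_{\omega_0}\) is infinite dimensional (real parts of holomorphic functions), so your fixed-point scheme needs either boundary conditions or a compactification; the paper extends \(\omega\) to a conical metric on \(\mathbf{CP}^2\) (Lemma \ref{lem:EXT}) and then runs the implicit function theorem on zero-average spaces, solving the equation exactly only on a small ball \(B_{1/k}\) by cutting off the Ricci potential. Second, your localization of the worst error at \(|w|\sim|z|\) is off: in the paper's ansatz the largest Ricci-potential error occurs in the gluing region \(|w|\sim|z|^{\kappa}\) with \(\kappa=\alpha_0\beta_1/\gamma<1\) (Region \(\mathbf{III}\) of Proposition \ref{propriccipot}), while deep in the collision region the relative error is only \(O(\rho^{2\gamma/\beta_1-2})\); this matters because it is the exponent \(2\gamma/\beta_1-2\alpha_0\) that limits the available H\"older exponent for the right-hand side.
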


Some clarifying remarks are in order.
In item (a) we mean standard cone singularities in transverse directions, as considered by Donaldson \cite{Donaldson}. The Calabi-Yau equation in item (b) implies that \(\omega_{CY}\) is smooth and Ricci-flat on the complement of the lines. The tangent cone statement in item (c) is understood in the Gromov-Hausdorff sense.

The condition ~\eqref{anglecond} can be interpreted in terms of strict instability. The case where the inequality in \eqref{anglecond} is strictly violated, or equivalently, when the Troyanov condition (see \cite{LuTi92,tr91}) 
\[
	\sum_{j=1}^d (1 - \beta_j) > 2 \max_i \hspace{2pt} (1-\beta_i)
\]
holds was considered by de Borbon-Spotti \cite{dBoSpo} and modelled on polyhedral K\"ahler cones. 
The construction in the semistable case, where equality is obtained in \eqref{anglecond}, remains open. See Section \ref{sect:GENPICT} for further discussion.

Combining our main result together with a branched covering, we obtain the following result.

\begin{corollary}\label{cor:CUSP}
	Let \(C=\{u^m=v^n\}\) be a cuspidal curve with \(2 \leq m < n\) and let 
	\begin{equation}\label{eq:unstable}
		1 + \frac{1}{n} - \frac{1}{m} < \beta < 1 .
	\end{equation}
	There is a Calabi-Yau metric on a neighbourhood of \(0 \in \C^2\), with cone angle \(2\pi\beta\) along \(C \setminus \{0\}\) and tangent cone \(\C_{\tilde{\gamma}} \times \C\) at the origin with \(1-\tilde{\gamma}= m (1-\beta)\).
\end{corollary}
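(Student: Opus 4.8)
The plan is to deduce Corollary~\ref{cor:CUSP} from Theorem~\ref{MAINTTHM} by pulling back the singular Calabi-Yau metric under the normalization map of the cuspidal curve and checking that the required angle conditions match up.

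Let me set up the branched covering. The cusp $C = \{u^m = v^n\}$ with $\gcd(m,n)=1$ is parametrized by $u = t^n$, $v = t^m$... actually the standard resolution is a map from $\C^2$ with coordinates $(z,w)$. Let me think about what covering to use. I want a map $\Phi: \C^2 \to \C^2$, $(z,w) \mapsto (u,v)$, branched so that the preimage of the cusp $C$ becomes a union of $m$ (or some number of) smooth complex lines through the origin. The natural choice: since the cusp has a single branch with Puiseux parametrization $u = t^m, v = t^n$ (for $m<n$), I want to find a covering whose preimage splits $C$ into lines. A good candidate is $\Phi(z,w) = (z^a w^b, \ldots)$ type monomial map, or better, a cyclic cover. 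The key requirement is that $\Phi^* C$ is a configuration of $d$ lines through the origin whose weighted angles satisfy~\eqref{anglecond}.

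Here is the concrete approach I would carry out. First, I would find the appropriate branched cover $\Phi:\C^2\to\C^2$ such that (i) $\Phi$ is a finite map, holomorphic, branched only over the coordinate axes and/or over $C$, (ii) the reduced preimage $\Phi^{-1}(C)$ is a union $L_1 \cup \cdots \cup L_d$ of distinct lines through the origin, and (iii) the pullback of the meromorphic volume form transforms correctly. Concretely, the cusp $u^m = v^n$ has normalization given by $t\mapsto (t^n, t^m)$, and I expect the relevant cover to be built from this, with $d$ determined by the deck group so that the cone-angle bookkeeping yields the stated $\tilde\gamma$. Second, I would verify the angle translation: if $\omega_{CY}$ from Theorem~\ref{MAINTTHM} has cone angle $2\pi\beta_j$ along $L_j$, then $\Phi_*$ (pushing forward, or rather viewing $\omega_{CY}$ as $\Phi^*$ of a metric downstairs) produces cone angle $2\pi\beta$ along $C$, where the multiplicities of $\Phi$ along each line convert the angles. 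The formula $1-\tilde\gamma = m(1-\beta)$ should emerge because the tangent cone $\C_{\beta_1}\times\C_\gamma$ upstairs descends, and the branching multiplicity $m$ along the cuspidal direction rescales the cone angle of the $\C_\gamma$ factor. Third, and crucially, I would check that the unstable inequality~\eqref{eq:unstable}, namely $1 + \tfrac1n - \tfrac1m < \beta < 1$, is exactly equivalent to the instability condition~\eqref{anglecond} for the pulled-back line configuration; this equivalence is what makes Theorem~\ref{MAINTTHM} applicable.

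The main obstacle I anticipate is twofold. The delicate point is choosing the cover so that $\Phi^{-1}(C)$ is genuinely a union of \emph{distinct lines through the origin} (not lines with multiplicity, and not a more singular configuration), and simultaneously ensuring the cone angles $\beta_1,\ldots,\beta_d$ assigned to these lines are all \emph{equal} to a common value determined by $\beta$ (since the deck group permutes the lines, symmetry should force this, but it must be verified). Once the configuration and angles are fixed, the second difficulty is checking that $\Phi^*(\text{volume form on }C\text{-side})$ matches $\prod_j |\ell_j|^{2\beta_j - 2}$ up to a smooth nonvanishing factor, so that the Calabi-Yau equation~(b) of Theorem~\ref{MAINTTHM} transports to the Calabi-Yau equation downstairs for the cusp; this is a Jacobian computation for $\Phi$, where the ramification divisor contributes exactly the discrepancy needed to convert the cone angle $2\pi\beta_j$ upstairs into $2\pi\beta$ along $C$. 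The Gromov-Hausdorff tangent cone statement in Corollary~\ref{cor:CUSP} should then follow by equivariance of the tangent cone under the (finite, hence locally bi-Lipschitz away from ramification, and metrically controlled) cover, identifying $\C_{\beta_1}\times\C_\gamma$ upstairs with $\C_{\tilde\gamma}\times\C$ downstairs via the induced quotient map.
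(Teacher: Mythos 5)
Your covering map points the wrong way, and this is a genuine gap, not a technicality. You place the lines in the \emph{source} of \(\Phi\) (as \(\Phi^{-1}(C)\)) and the cusp in the \emph{target}, intending to apply Theorem \ref{MAINTTHM} in the source and then descend the metric. This cannot work with Theorem \ref{MAINTTHM} as input. Since the descended metric must have angle \(2\pi\beta\) along \(C\), and \(\beta > 1+\tfrac1n-\tfrac1m > \tfrac12\), any ramification of \(\Phi\) of index \(e_j\geq 2\) along a component \(L_j\) of \(\Phi^{-1}(C)\) would force cone angle \(2\pi e_j\beta > 2\pi\) along \(L_j\) upstairs, outside the admissible range \(\beta_j<1\); hence \(\Phi\) is unramified along \(\Phi^{-1}(C)\) and all \(d\) lines carry the \emph{same} angle \(2\pi\beta\). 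But a configuration of \(d\geq 3\) lines with equal angles never satisfies \eqref{anglecond}, which would read \((d-1)(1-\beta) < (1-\beta)\): equal angles is the \emph{stable} situation, so the equivalence you hope to verify in your third step is false, and Theorem \ref{MAINTTHM} simply does not apply to your pulled-back configuration. The difficulties compound: \(\Phi\) must ramify along some divisor besides \(\Phi^{-1}(C)\) (a finite cover of a punctured neighbourhood of \(0\in\C^2\), unramified in codimension one, is trivial by simple connectedness, and a biholomorphism cannot map lines to a cusp), and along such a divisor the descended metric is smooth only if the upstairs angle equals \(2\pi\times(\text{ramification index})\geq 4\pi\) --- again impossible --- while otherwise the descended metric acquires extra cone singularities (e.g.\ along the coordinate axes) that the corollary forbids. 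Finally, descent requires deck-invariance of \(\omega_{CY}\), which Theorem \ref{MAINTTHM}, having no uniqueness clause, does not supply.

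The paper runs the cover in the opposite direction, which removes every one of these obstructions. Apply Theorem \ref{MAINTTHM} in the \emph{target}, to the three lines \(\{w=0\}\), \(\{z=0\}\), \(\{z=w\}\) with the \emph{unequal} angles \(\beta_1=1/n\), \(\beta_2=1/m\), \(\beta_3=\beta\) (the ordering \(1/n<1/m<\beta\) is automatic); with this assignment, condition \eqref{anglecond} reads \((1-\tfrac1m)+(1-\beta) < 1-\tfrac1n\), which is precisely \eqref{eq:unstable}. Then \emph{pull back} by \(\Phi(u,v)=(u^m,v^n)\): the cusp \(C=\{u^m=v^n\}\) is the preimage of the line \(\{z=w\}\), pullback requires no equivariance, and the ramification of orders \(m\) and \(n\) along the axes exactly cancels the cone angles \(2\pi/m\) and \(2\pi/n\) placed there, since for instance \(|mu^{m-1}|^2\,|u^m|^{2/m-2}=m^2\); consequently \(\det(\Phi^*\omega_{CY}) = m^2n^2\,|u^m-v^n|^{2\beta-2}\), so \(\Phi^*\omega_{CY}\) is Calabi-Yau, smooth along the axes, with cone angle \(2\pi\beta\) along \(C\setminus\{0\}\). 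The tangent cone \(\C_{1/n}\times\C_{\gamma}\) of \(\omega_{CY}\), with \(1-\gamma=(1-\tfrac1m)+(1-\beta)\), pulls back to \(\C\times\C_{\tilde\gamma}\) with \(\tilde\gamma=m\gamma\), i.e.\ \(1-\tilde\gamma=m(1-\beta)\), as claimed. Your instinct that branching rescales cone angles is exactly the mechanism; it just has to be run with the cusp upstairs and the lines downstairs, with the small angles \(1/m\), \(1/n\) assigned to the branching axes.
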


Indeed, we take \(\omega_{CY}\) in Theorem \ref{MAINTTHM} with \(\beta_1=1/n\) along \(\{w=0\}\), \(\beta_2=1/m\) along \(\{z=0\}\) and \(\beta_3=\beta\) along \(\{z=w\}\). The metric in Corollary \ref{cor:CUSP} is the pullback of \(\omega_{CY}\) by the branched covering map \((u, v) \to (u^m, v^n)\). 

The set of cone angles ~\eqref{eq:unstable} represents the `unstable range'. For cone angles in the `stable range',
\[
	1 - \frac 1 m - \frac 1 n < \beta < 1 + \frac 1 n - \frac 1 m,
\]
Calabi-Yau metrics were constructed in ~\cite{dBoSpo}. The existence of Calabi-Yau metrics with prescribed behaviour as in Corollary \ref{cor:CUSP} was speculated for the \(m=2\), \(n=3\) case in \cite[p. 213]{CDS15b}.

\subsection*{Outline}
For the sake of definiteness, and to simplify notation, we assume from now on that the number of lines in Theorem \ref{MAINTTHM} is \(d=3\). The arguments for \(d>3\) are the same as for the \(d=3\) case, with the obvious modifications.

In Section \ref{sect:FMETRIC}, we use the Green's function for the Laplacian to obtain potentials for flat metrics on $\C$ which have two cone singularities, with angles $2\pi\beta_2$ and $2 \pi\beta_3$ at each point, and are asymptotic to $\C_\gamma$ at infinity. In Section \ref{sect:APROXSOL}, we write down an approximate metric on $\C^2$, with small Ricci potential, which has the prescribed conical singularities along the lines and the desired tangent cone at the origin. The approximate metric is modelled on $\C_{\beta_1} \times \C_\gamma$ in a neighbourhood of the origin, and uses Sz\'ekelyhidi's ansatz \cite{sz} to glue in scaled copies of the flat metrics with two cone points asymptotic to $\C_\gamma$.

The main technical work of the paper is in Section \ref{sect:SCHAUDER} deriving an appropriate Schauder estimate for the Laplace operator of the approximate solution acting on H\"older spaces. The estimate is proved by Campanato iteration using harmonic approximations, along the lines of \cite{dBoEdw}. There are two key ingredients: (i) approximation of balls, up to a fixed error, by balls centred at the apex of suitable model cones; (ii)  \(C^{\alpha}\) control on the complex Hessian for a family of reference functions which approximate the subquadratic harmonic polynomials on the model cones. Once the Schauder estimate is proved, the approximate solution is perturbed to an actual Ricci-flat metric by means of a standard application of the implicit function theorem in Section \ref{sect:PERTURBATION}. Finally, in Section \ref{sect:GENPICT} we discuss the relation of the Calabi-Yau metrics to algebro-geometric notions of stability and higher dimensions.

\begin{ack}
	The authors would like to thank G\'abor Sz\'ekelyhidi for sharing with us his insights and helpful conversation.
	
	The first named author was financially supported by the Agence Nationale de la Recherche, project CCEM: \textbf{ANR-17-CE40-0034}. 
	
	The second named author was supported by the National Science Foundation RTG:
Geometry and Topology at the University of Notre Dame, grant number \textbf{DMS-1547292}.

\end{ack}

\section{Flat metrics on \(\C\) with two cone points} \label{sect:FMETRIC}

We set the defining equations to be \(L_1 = \{z=0\}\) and \( L_{j} = \{w = a_j z \}\) for \(j=2, 3\). The linear maps of \(\C^2\) that preserve \(L_1\) act on the slice \(\{z=1\} \cong \C\) by affine transformations. Performing a suitable linear change of coordinates we assume that the weighted centre of mass \(\sum_j (1-\beta_j)a_j\) is located at zero:
\begin{equation*}
(1-\beta_2)a_2 + (1-\beta_3)a_3 = 0 .
\end{equation*}

The area form
\begin{equation} \label{flatmetriconc}
\omega_F = \gamma^2 |w-a_2|^{2\beta_2 -2} |w - a_3|^{2\beta_3 -2} i d w d\bar{w} 
\end{equation}
defines a flat K\"ahler metric on \(\C\) with two conical singularities of angles \(2\pi \beta_2\) at \(w=a_2\) and \(2\pi \beta_3\) at \(w=a_3\). It is asymptotic to the cone \(\gamma^2 |w|^{2\gamma-2} i dw d \bar{w}\) at infinity, as follows by noticing that \(2\gamma-2 = (2\beta_2-2) + (2\beta_3 -2)\).

\begin{proposition}\label{propmetriconC}
	Assume that \((1-\beta_2)a_2 + (1-\beta_3)a_3 =0\), then we can solve 
	\begin{equation} \label{flateq}
	i \dd \phi = \omega_F
	\end{equation}
	with \(\phi\) asymptotic to \(|w|^{2\gamma}\) at infinity. More precisely, outside a compact set we can write
	\begin{equation} \label{fmpotential}
	\phi =  |w|^{2\gamma} + A \log |w| + \phi_0
	\end{equation}
	with
	\begin{equation*}
	A = \frac{\gamma^2}{\pi} \int_{\C} (|t-a_2|^{2\beta_2 -2} |t - a_3|^{2\beta_3 -2} - |t|^{2\gamma-2})  i dt d \bar{t}
	\end{equation*}
	and \(\phi_0 = O(|w|^{-c})\) with derivatives for \(c = \min \{2-2\gamma, 1\}\).
\end{proposition}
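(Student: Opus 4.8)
The plan is to convert the complex Monge--Amp\`ere equation \eqref{flateq}, which in one complex variable is genuinely linear, into a Poisson problem and solve it by an explicit logarithmic potential, after a regularization dictated by the asymptotic cone. Writing $\omega_F = f\, i\, dw\, d\bar w$ with $f = \gamma^2|w-a_2|^{2\beta_2-2}|w-a_3|^{2\beta_3-2}$, equation \eqref{flateq} reads $\partial_w\partial_{\bar w}\phi = f$. A naive Green's representation $\phi(w) = \frac1\pi\int_\C\log|w-t|\,f(t)\,i\,dt\,d\bar t$ diverges, because $f(t)\sim\gamma^2|t|^{2\gamma-2}$ at infinity and $\int\log|t|\,|t|^{2\gamma-2}\,i\,dt\,d\bar t=\infty$. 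Since $\partial_w\partial_{\bar w}|w|^{2\gamma}=\gamma^2|w|^{2\gamma-2}$ is exactly the density of the tangent cone at infinity, I would subtract it off: setting $\phi = |w|^{2\gamma}+\psi$ reduces the problem to $\partial_w\partial_{\bar w}\psi = g$, where $g = f-\gamma^2|w|^{2\gamma-2}$.

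The decisive point is that the normalization $(1-\beta_2)a_2+(1-\beta_3)a_3=0$ improves the decay of $g$. Expanding $|w-a_j|^{2\beta_j-2}=|w|^{2\beta_j-2}(1-2(\beta_j-1)\mathrm{Re}(a_j/w)+O(|w|^{-2}))$ and using $(2\beta_2-2)+(2\beta_3-2)=2\gamma-2$, the product $f/\gamma^2$ equals $|w|^{2\gamma-2}$ times $1 + 2\,\mathrm{Re}\big(((1-\beta_2)a_2+(1-\beta_3)a_3)/w\big)+O(|w|^{-2})$; the center-of-mass condition kills the $1/w$ term, so $g = O(|w|^{2\gamma-4})$ rather than the generic $O(|w|^{2\gamma-3})$. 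This is exactly the decay needed for $\int_\C g\, i\, dt\, d\bar t$ to converge for every $\gamma<1$ (the integrand is $O(r^{2\gamma-3})$ in polar coordinates), which is what makes the constant $A=\frac1\pi\int_\C g\, i\, dt\, d\bar t$ well defined and equal to the stated value. I would then define $\psi$ as the logarithmic potential $\psi(w)=\frac1\pi\int_\C\log|w-t|\,g(t)\,i\,dt\,d\bar t$, check convergence near the cone points $a_2,a_3$ (where $g$ is integrable since $2\beta_j-2>-2$) and at infinity, and verify $\partial_w\partial_{\bar w}\psi=g$ from the fundamental solution identity $i\partial\bar\partial\log|w|=\pi\delta_0$. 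Together with $\partial_w\partial_{\bar w}|w|^{2\gamma}=\gamma^2|w|^{2\gamma-2}$ this gives $i\dd\phi=\omega_F$; elliptic regularity then shows $\phi$ is smooth wherever $f$ is, i.e.\ away from $w=a_2,a_3$.

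For the asymptotics I would split $\log|w-t|=\log|w|+\log|1-t/w|$. The first piece reproduces $A\log|w|$, and the remainder is $\phi_0(w)=\frac1\pi\int_\C\log|1-t/w|\,g(t)\,i\,dt\,d\bar t$, which I must show is $O(|w|^{-c})$ with $c=\min\{2-2\gamma,1\}$, together with its derivatives. The two terms in the minimum come from two distinct regions. On a fixed bounded set, $\log|1-t/w|=-\mathrm{Re}(t/w)+O(|w|^{-2})$, and since the first moment $\int_{\mathrm{bdd}}|t|\,|g|\, i\, dt\, d\bar t$ is finite this region contributes the dipole term $O(|w|^{-1})$. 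The far region is captured by the rescaling $t=sw$, $i\, dt\, d\bar t=|w|^2\, i\, ds\, d\bar s$, under which $g(sw)|w|^2=O(|w|^{2\gamma-2}|s|^{2\gamma-4})$ and the resulting $s$-integral $\int_\C\log|1-s|\,|s|^{2\gamma-4}\, i\, ds\, d\bar s$ converges at $\infty$ (as $\gamma<1$) and, when $\gamma>\tfrac12$, also at $s=0$; this yields the contribution $O(|w|^{-(2-2\gamma)})$. Taking the slower of the two decays gives the claimed $c$. Differentiating under the integral sign, each $\partial_w$ or $\partial_{\bar w}$ brings down a factor decaying one order faster, so the same estimates control the derivatives.

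I expect the asymptotic estimate of $\phi_0$ in the last paragraph to be the main obstacle: one must organize the region decomposition so that the logarithmic kernel's contributions combine without producing spurious $\log|w|$ factors, and track the borderline behaviour near $\gamma=\tfrac12$ where the two decay mechanisms coincide. The solvability and the identification of $A$, by contrast, are immediate once the center-of-mass normalization has been used to secure the $O(|w|^{2\gamma-4})$ decay of $g$.
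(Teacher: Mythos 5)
Your proposal is correct and follows essentially the same route as the paper: regularize the Green's representation by subtracting the cone density \(\gamma^2|w|^{2\gamma-2}\), use the centre-of-mass condition to get the \(O(|t|^{2\gamma-4})\) decay of the source, split \(\log|w-t|=\log|w|+\log|1-t/w|\), and estimate the remainder integral region by region. The only difference is technical rather than conceptual: the paper bounds the remainder directly over the three regions \(\{|t|\le |w|/2\}\), \(\{|w|/2\le |t|\le 2|w|\}\), \(\{|t|\ge 2|w|\}\) with pointwise kernel bounds instead of your rescaling \(t=sw\), and in doing so it treats the case \(\gamma\le 1/2\) uniformly --- exactly the regime where your rescaled \(s\)-integral diverges at \(s=0\) and must be cut off at \(|s|\sim |w|^{-1}\) to yield the \(O(|w|^{-1})\) term, as you yourself anticipate in your final paragraph.
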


\begin{proof}
	We solve the corresponding Poisson equation by taking convolution with the Green's function
	\begin{equation} \label{flatpotential}
	\phi(w) =  |w|^{2\gamma} + \frac{\gamma^2}{\pi} \int_{\C} (|t-a_2|^{2\beta_2 -2} |t - a_3|^{2\beta_3 -2} - |t|^{2\gamma-2}) \log (|w - t|) d\mu(t),	
	\end{equation}
	where \(d\mu(t) = i dt d \bar{t}\) is the standard Lebesgue measure. Write
	\begin{equation*}
		f(t) = |t-a_2|^{2\beta_2 -2} |t - a_3|^{2\beta_3 -2} - |t|^{2\gamma-2} .
	\end{equation*}
	Our assumption \(a_2 (1-\beta_2) + a_3(1-\beta_3) =0\) implies that for \(|t|\gg 1\) we have
	\begin{align*}
	\left|	f(t) \right| &= |t|^{2\gamma-2} \left| |(1-a_2/t)^{\beta_2-1} (1-a_3/t)^{\beta_3-1}|^2 -1 \right| \\
	&=|t|^{2\gamma-2} \left| |(1-a_2 (\beta_2-1) /t + O (t^{-2})) (1-a_3(\beta_3-1)/t + O(t^{-2}))|^2 -1 \right| \\ 
	&= O(|t|^{-2-\epsilon}) ,
	\end{align*}
	with \(\epsilon=2-2\gamma>0\). Hence, the integral on the r.h.s. of equation \eqref{flatpotential} is convergent and \(\phi\) solves equation \eqref{flateq}. 
	
	For \(|w| \gg 1\) we can write
	\begin{equation} \label{fmpot}
		\phi(w) - |w|^{2\gamma} - A \log |w| = \frac{\gamma^2}{\pi} \int_{\C} f(t) \log |1-t/w| d\mu(t) .
	\end{equation}
	We estimate the integral on the r.h.s. of equation \eqref{fmpot} by dividing it into three regions:
	
	\begin{enumerate}
		\item[(i)] \(\{|t| \geq 2|w|\}\). We use that \(|f(t)| \leq C |t|^{-2-\epsilon}\) together with \( 0 \leq \log |1 - t/w| \leq C \log |t/w|\) to get
		\begin{align*}
		\int_{2|w|}^{\infty} r^{-1-\epsilon}  \log (r/|w|) dr &= |w|^{-\epsilon} \int_{2}^{\infty} s^{-1-\epsilon} (\log s) ds \\
		&= C |w|^{-\epsilon} .
		\end{align*}
		
		\item[(ii)] \(\{|w|/2 \leq |t| \leq 2|w|\}\). We use that \(|f(t)| \leq C |w|^{-2-\epsilon}\) to get 
		\begin{align*}
		\int_{|w|/2 \leq |t| \leq 2|w|} & |w|^{-2-\epsilon}  |\log |1 - t/w|| d\mu(t) \\
		&\leq |w|^{-2-\epsilon} \int_{|t-w| \leq 3|w|} \log (|t-w|/|w|) d\mu(t) \\
		&= |w|^{-2-\epsilon} 2\pi \int_0^{3|w|} |\log(r/|w|)| r dr \\
		&= |w|^{-\epsilon} 2\pi \int_0^{3} |\log s| s ds \\
		&= C |w|^{-\epsilon} .
		\end{align*}
		
		\item[(iii)] \(\{|t| \leq |w|/2\}\). Let \(R= \max \{|a_2|, |a_3|\}\). We use that \(|f(t)| \leq C |t|^{-2-\epsilon}\) if \(|t| \geq 2R\) together with \( |\log |1 - t/w| | \leq C |t/w|\) to get
		\begin{align*}
		\int_{|t| \leq |w|/2} |f(t)|  |t/w|  d\mu(t) &\leq C |w|^{-1} \left(\int_{|t| \leq 2R} |tf(t)| d\mu(t) + \int_{2R}^{|w|/2} r^{-\epsilon}   dr \right) \\
		&= O(|w|^{-\min \{1, \epsilon\} }) .
		\end{align*}
		
	\end{enumerate} 
	
	The above three estimates together imply that 
	\begin{equation} \label{fmpot2}
		\left|\int_{\C} f(t) \log |1-t/w| d\mu(t)\right| = O(|w|^{-\min \{1, \epsilon\}}) .
	\end{equation}
	Equation \eqref{fmpotential} follows from equations \eqref{fmpot} and \eqref{fmpot2}.
\end{proof}

The metric defined by equation \eqref{flatmetriconc} is obtained by doubling a truncated wedge on the Euclidean plane with interior angles \(\pi\beta_2\) and \(\pi\beta_3\).  If we let \(a_2, a_3\) to be real and take a branch of the logarithm, then the Schwarz-Christoffel integral (see \cite[Chapter 6]{ahlfors})
\begin{equation*}
F(w) = \int_0^w (t - a_2)^{\beta_2 -1} (t - a_3)^{\beta_3 -1} dt
\end{equation*}
gives a conformal equivalence between the upper half plane and the truncated wedge; see Figure \ref{fig:truncated wedge}. From this point of view, a potential for the metric is given by \(|F(w)|^2\).

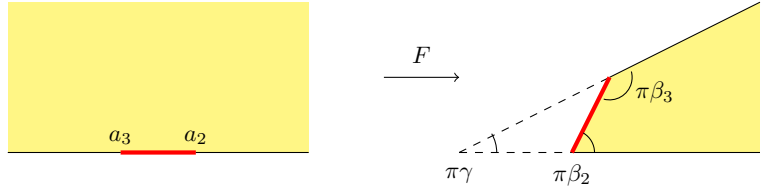
\begin{figure}
	\centering
	\begin{tikzpicture}
	
	\fill[yellow!60] (-2, 0) rectangle (2, 2);
	\fill[yellow!60] (5.5, 0) -- (8, 0) -- (8, 2) -- (6, 1) ;
	
	\draw[dashed] (4, 0) to (5.5,0);
	\draw[] (5.5, 0) to (8,0);
	\draw[dashed] (4, 0) to (6,1);
	\draw[] (6, 1) to (8, 2);
	\draw[red, ultra thick] (-.5, 0) to (.5,0);
	\draw[red, ultra thick] (5.5, 0) to (6,1);
	\draw[] (.5, 0) to (2,0);
	\draw[] (-.5, 0) to (-2,0);
	\draw[->] (3, 1) to (4,1);
	
	\centerarc[](5.5,0)(0:60:.3);
	\centerarc[](6,1)(-105:15:.3);
	\centerarc[](4,0)(0:30: .5);
	
	\draw  (-.5, 0.2) node (asd) [scale=.9] {\(a_3\)};
	\draw  (.5, .2) node (asd) [scale=.9] {\(a_2\)};
	\draw  (4, -0.3) node (asd) [scale=.9] {\(\pi \gamma\)};
	\draw  (5.5, -.3) node (asd) [scale=.9] {\(\pi \beta_2\)};
	\draw  (6.6, .8) node (asd) [scale=.9] {\(\pi \beta_3\)};
	\draw  (3.5, 1.3) node (asd) [scale=.9] {\(F\)};
	
	\end{tikzpicture}
	\caption{The Schwarz-Christoffel map uniformizes the truncated wedge. The flat metric \(\omega_F\) is obtained by doubling. If we parallel move the red segment of the truncated wedge, we realize the collision of the cone points \(2\pi\beta_2\) and \(2\pi\beta_3\) into \(2\pi\gamma\).}
	\label{fig:truncated wedge}
\end{figure}

Standard regularity theory implies that  \(\phi\) is smooth on \(\C \setminus \{a_2, a_3\}\) and it extends continuously over the cone points \(\{a_2, a_3\}\).

\begin{lemma} \label{lem:LOCREGFM}
	In a neighbourhood of \(a_2\) we can write 
	\begin{equation} \label{locregFm}
	\phi = e^{S_1} |w-a_2|^{2\beta_2} + S_2
	\end{equation}
	with \(S_1\) and \(S_2\)  smooth harmonic  functions; and similarly  around \(a_3\).
\end{lemma}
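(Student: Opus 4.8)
The plan is to write down an explicit local potential for $\omega_F$ near $a_2$ that manifestly has the stated shape, and then to absorb the discrepancy with $\phi$ into the harmonic term $S_2$. Set $\zeta = w - a_2$. Since $a_2 \neq a_3$, on a small disk around $\zeta = 0$ the factor $g(\zeta) := (\zeta + a_2 - a_3)^{\beta_3 - 1}$ is a well-defined holomorphic, nowhere-vanishing function (fix any branch), and the density of $\omega_F$ equals $\gamma^2 |\zeta|^{2\beta_2 - 2} |g(\zeta)|^2$. I would then integrate the Schwarz--Christoffel differential recentred at $a_2$: expanding $g(\zeta) = \sum_{k \ge 0} c_k \zeta^k$ with $c_0 = (a_2-a_3)^{\beta_3-1} \neq 0$ and setting
\[
G(\zeta) = \int_0^\zeta s^{\beta_2 - 1} g(s)\, ds = \zeta^{\beta_2} H(\zeta), \qquad H(\zeta) = \sum_{k \ge 0} \frac{c_k}{\beta_2 + k}\, \zeta^k ,
\]
gives a (multivalued) primitive with $H$ holomorphic and $H(0) = c_0 / \beta_2 \neq 0$.

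The point of this factorization is that $|G|^2 = |\zeta|^{2\beta_2}\, |H(\zeta)|^2$ is single-valued (the branch ambiguity in $\zeta^{\beta_2}$ disappears upon taking the modulus), while wherever $G$ is holomorphic one has $(|G|^2)_{\zeta\bar\zeta} = |G'(\zeta)|^2 = |\zeta|^{2\beta_2 - 2} |g(\zeta)|^2$. Hence $\gamma^2 |G|^2$ solves $i\dd(\gamma^2|G|^2) = \omega_F$ on the punctured disk. Because $H$ is holomorphic and nonvanishing, $\log H$ is holomorphic there, so
\[
\gamma^2 |G|^2 = e^{S_1}\, |\zeta|^{2\beta_2}, \qquad S_1 := \log \gamma^2 + 2\,\mathrm{Re}\,\log H(\zeta),
\]
and $S_1$ is smooth and harmonic on the whole disk. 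This already realizes the first term of \eqref{locregFm}.

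Finally, $\phi$ and $\gamma^2 |G|^2$ are two potentials for the same form $\omega_F$ on the punctured disk, so their difference $S_2 := \phi - \gamma^2 |G|^2$ satisfies $i\dd S_2 = 0$, i.e. it is harmonic on $0 < |\zeta| < r$. By the regularity statement preceding the lemma, $\phi$ extends continuously over $a_2$, and $\gamma^2|G|^2 \to 0$ there, so $S_2$ is bounded near $\zeta = 0$. A bounded harmonic function on a punctured disk extends harmonically across the puncture, so $S_2$ is harmonic, hence smooth, on the full neighbourhood, giving $\phi = e^{S_1}|\zeta|^{2\beta_2} + S_2$ as claimed; the argument around $a_3$ is identical.

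I expect the only delicate points to be bookkeeping rather than substance: verifying single-valuedness of $|G|^2$ despite the fractional power, and justifying the removable-singularity step for $S_2$ — which is exactly where the continuity of $\phi$ at the cone point, already asserted, is used. Everything else reduces to the explicit computation $(|G|^2)_{\zeta\bar\zeta} = |G'|^2$ and the identification of $\log|H|$ as the real part of a holomorphic function.
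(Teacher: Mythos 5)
Your proof is correct, and it takes a genuinely more self-contained route than the paper's. The paper does not construct the local potential by hand: it invokes Troyanov's normal-form result (\cite[Lemma 3.4]{troyanovpolaires}) to obtain a holomorphic coordinate \(v=v(w)\) with \(v(a_2)=0\) in which \(\omega_F=\beta_2^2|v|^{2\beta_2-2}\,i\,dv\,d\bar v\); then \(i\dd\bigl(\phi-|v|^{2\beta_2}\bigr)=0\) on the punctured neighbourhood, the difference extends over the puncture by removable singularities, and rewriting \(|v|^{2\beta_2}=e^{S_1}|w-a_2|^{2\beta_2}\), with \(S_1=2\beta_2\log|v(w)/(w-a_2)|\) harmonic, yields \eqref{locregFm}. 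Your recentred Schwarz--Christoffel primitive is exactly this normalization made explicit --- \(v\) is a constant multiple of \(G^{1/\beta_2}\), so \(|v|^{2\beta_2}\) and \(\gamma^2|G|^2\) agree up to the harmonic conformal factor --- and the paper itself anticipates your computation in the remark following Proposition \ref{propmetriconC}, where \(|F(w)|^2\) is identified as a potential for \(\omega_F\). What your version buys is independence from the external citation (the one-line identity \((|G|^2)_{\zeta\bar\zeta}=|G'|^2\) does the work) and precision at the delicate step: you state exactly which input removes the singularity of \(S_2\), namely boundedness coming from the continuity of \(\phi\) at the cone point, which the paper compresses into the phrase ``by removable of singularities''. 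What the paper's version buys is brevity and a geometric statement (existence of a conformal cone coordinate) of independent use; both arguments share the same final structure of an explicit conical potential plus a harmonic remainder extended across the puncture.
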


\begin{proof}
	Around the cone point \(a_2\) we can find a complex coordinate \(v=v(w)\) with \(v(a_2) = 0\) and such that \(\omega_F = \beta_2^2 |v|^{2\beta_2-2} i dv d\bar{v}\), see \cite[Lemma 3.4]{troyanovpolaires}. In particular, \(i\dd (\phi(v) - |v|^{2\beta_2}) = 0 \) and by removable of singularities \(\phi(v) - |v|^{2\beta_2}\) must extend smoothly over zero. Going back to the \(w\) coordinate we recover \eqref{locregFm}. 
\end{proof}

We differentiate equation \eqref{locregFm} to obtain the following.

\begin{corollary}
	Around \(a_2\) we have \(|\p \phi|= O(|w-a_2|^{2\beta_2-1})\) and similarly for \(a_3\). In particular, the ratio 
	\[\left| \frac{\p \phi}{\p w} \right|^2 \left(\frac{\p^2 \phi}{\p w \p \bar{w}}\right)^{-1}\]
	is uniformly bounded around the singularities.
\end{corollary}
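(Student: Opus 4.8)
The plan is to differentiate the local normal form furnished by Lemma \ref{lem:LOCREGFM}. Writing $u = w-a_2$, near $a_2$ we have $\phi = e^{S_1}|u|^{2\beta_2} + S_2$ with $S_1,S_2$ smooth (harmonic). Applying $\partial/\partial w = \partial/\partial u$ and using $\partial_u |u|^{2\beta_2} = \beta_2 |u|^{2\beta_2-2}\bar u$ gives
\[
\frac{\partial \phi}{\partial w} = \beta_2 e^{S_1}|u|^{2\beta_2-2}\bar u + (\partial_u S_1)\, e^{S_1}|u|^{2\beta_2} + \partial_u S_2 .
\]
Since $e^{S_1}$, $\partial_u S_1$ and $\partial_u S_2$ are bounded near $a_2$, the first term is $O(|u|^{2\beta_2-1})$, the second is $O(|u|^{2\beta_2})$, and the third is $O(1)$. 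The genuinely singular contribution is thus the first term, so $|\partial\phi|$ is controlled by $|w-a_2|^{2\beta_2-1}$: when $2\beta_2-1\le 0$ this term captures the blow-up and dominates, while when $2\beta_2-1>0$ the derivative is in fact bounded near $a_2$. The discussion around $a_3$ is identical with $\beta_3$ in place of $\beta_2$.

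For the ratio I would exploit that the denominator is exactly the density of $\omega_F$. From $i\partial\bar\partial\phi = \omega_F$ and \eqref{flatmetriconc},
\[
\frac{\partial^2 \phi}{\partial w\,\partial\bar w} = \gamma^2 |w-a_2|^{2\beta_2-2}|w-a_3|^{2\beta_3-2},
\]
which near $a_2$ is comparable to $|u|^{2\beta_2-2}$ (the factor $|w-a_3|^{2\beta_3-2}$ being bounded above and below there). Combining with $|\partial_w\phi|^2 = O(|u|^{2(2\beta_2-1)})$ from the first part yields
\[
\left|\frac{\partial\phi}{\partial w}\right|^2\left(\frac{\partial^2\phi}{\partial w\,\partial\bar w}\right)^{-1} = O\!\left(|u|^{(4\beta_2-2)-(2\beta_2-2)}\right) = O(|u|^{2\beta_2}),
\]
which tends to $0$ as $w\to a_2$; in particular it is bounded. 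The same estimate holds near $a_3$, and on the complement of $\{a_2,a_3\}$ the potential $\phi$ is smooth with positive Hessian, so the ratio is bounded on an entire punctured neighbourhood of each cone point.

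The computation is essentially routine once Lemma \ref{lem:LOCREGFM} is in hand; the only point requiring care is the bounded, generically non-vanishing term $\partial_u S_2$ coming from the harmonic part. This term is what prevents $\partial\phi$ from vanishing at the cone point in the regime $\beta_2>1/2$, so one should read the first estimate as controlling the leading singularity rather than forcing decay. Crucially it does not affect the second assertion: in the worst case $\partial_w\phi = O(1)$ the denominator $\partial_w\partial_{\bar w}\phi\sim|u|^{2\beta_2-2}$ still blows up, giving a ratio $O(|u|^{2-2\beta_2})\to 0$. Hence the boundedness of the ratio is robust across all admissible cone angles, which is the property actually used in the sequel.
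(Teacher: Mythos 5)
Your proof is correct and is essentially the paper's own argument: the paper's proof consists precisely of differentiating equation \eqref{locregFm}, and your identification of the denominator with the density of \(\omega_F\) is implicit in the defining equation \(i\partial\bar\partial\phi=\omega_F\). Your caveat about the term \(\partial_u S_2\) is moreover a genuine and correct refinement rather than a pedantic one: since \(\phi\) is normalized at infinity, nothing in the construction forces \(\partial S_2\) to vanish at \(a_2\), so for \(\beta_2>1/2\) (and note that \(\gamma=\beta_2+\beta_3-1>0\) forces at least one of the two angles into this range) the literal bound \(|\partial\phi|=O(|w-a_2|^{2\beta_2-1})\) fails and should be read as \(O(1+|w-a_2|^{2\beta_2-1})\). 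As you observe, this weaker statement still gives the boundedness of the ratio, and it is also all that is used downstream (Lemma \ref{lem:COLLREG} only invokes boundedness of \(|\partial\phi|\,|\xi-a_2|^{1-\beta_2}|\xi-a_3|^{1-\beta_3}\) near the singularities), so no later argument is affected.
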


\section{The approximate solution} \label{sect:APROXSOL}

\subsection{The ansatz} 
Write 
\begin{align*}
	r &= |z|^{\beta_1} , \\
	R &= |w|^{\gamma} , \\
	\rho^2 &= r^2 + R^2 .
\end{align*}
So \(\rho\) measures the distance to the origin with respect to \(\cone\). Similarly, \(r\) measures the distance to the \(\beta_1\)-line \(\{z=0\}\) and \(R\) measures the distance  to the \(\gamma\)-line \(\{w=0\}\) which  will be realized as the collision of \(L_2\) and \(L_3\).

Note that, since \(\gamma = \beta_2 + \beta_3 -1\), our crucial assumption given by equation \eqref{anglecond} is equivalent to
\begin{equation*}
	\gamma > \beta_1 .
\end{equation*}

Fix \(\alpha_0 \in (1, \gamma/\beta_1) \). 
We use standard cutoff functions such that \(\chi_1 + \chi_2 = 1 \), with \(\chi_1(s)=1\) if \(s>2\) and \(\chi_2(s)=1\) if \(s< 1\). Let \(\phi\) be the potential for the flat metric with two cone points given by Proposition \ref{propmetriconC}.
The approximate solution is 
\begin{equation}
\omega = i \dd \left( |z|^{2\beta_1} + \chi_1 (Rr^{-\alpha_0}) |w|^{2\gamma} + \chi_2 (Rr^{-\alpha_0}) |z|^{2\gamma} \phi (z^{-1}w) \right) .
\end{equation}

We now state the main result of this section.

\begin{proposition}\label{propaproxsol}
	\(\omega\) defines a K\"ahler metric on a neighbourhood of \( 0 \in \C^2\) with the following properties:
	\begin{enumerate}
		\item  it has standard cone singularities of angle \(2 \pi \beta_j \) along \(L_j \setminus \{0\}\);
		\item the tangent cone of \(\omega\) at \(0\) is \( \C_{\beta_1} \times \C_{\gamma} \).
	\end{enumerate}	 
\end{proposition}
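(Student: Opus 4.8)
The plan is to analyze $\omega$ region by region according to the cutoffs and then read off the three assertions (positivity off the lines, the transverse cone angles, and the tangent cone). Write $\zeta = w/z$, a holomorphic coordinate away from $\{z=0\}$, and set $\omega_0 := i\dd(|z|^{2\beta_1}+|w|^{2\gamma})$ for the flat model $\cone$. On $\Omega_1 := \{Rr^{-\alpha_0}>2\}$ we have $\chi_1\equiv1$, so $\omega=\omega_0$ identically, while on $\Omega_2 := \{Rr^{-\alpha_0}<1\}$ we have $\chi_2\equiv1$ and $\omega = i\dd\big(|z|^{2\beta_1}+|z|^{2\gamma}\phi(\zeta)\big)$; the remaining gluing region is $\{1\le Rr^{-\alpha_0}\le2\}$.

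For positivity, $\Omega_1$ is immediate since $\omega_0$ is diagonal and positive off $\{zw=0\}$. On $\Omega_2$ I would compute the complex Hessian of $u=|z|^{2\beta_1}+|z|^{2\gamma}\phi(\zeta)$ directly, using $\de_z\zeta=-\zeta/z$ and $\de_{\bar z}\bar{\zeta}=-\bar{\zeta}/\bar z$. One finds $u_{w\bar w}=|z|^{2\gamma-2}\phi_{\zeta\bar\zeta}>0$, and after the cross terms cancel,
\[
\det(u_{i\bar j}) = \beta_1^2|z|^{2\beta_1+2\gamma-4}\,\phi_{\zeta\bar\zeta} + \gamma^2|z|^{4\gamma-4}\big(\phi\,\phi_{\zeta\bar\zeta}-|\phi_\zeta|^2\big).
\]
The second term vanishes identically for the model $\phi=|\zeta|^{2\gamma}$ (reflecting that $|z|^{2\gamma}|\zeta|^{2\gamma}=|w|^{2\gamma}$ is degenerate in the $z$-direction), so it is the first term, present precisely because of the $|z|^{2\beta_1}$ summand, that makes $\omega$ nondegenerate. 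Dividing by $|z|^{4\gamma-4}$, positivity reduces to $\beta_1^2|z|^{-2(\gamma-\beta_1)}\phi_{\zeta\bar\zeta} > \gamma^2\big(|\phi_\zeta|^2-\phi\,\phi_{\zeta\bar\zeta}\big)$, and here the hypothesis $\gamma>\beta_1$ enters: the left side blows up like $|z|^{-2(\gamma-\beta_1)}$, while Proposition \ref{propmetriconC} gives $\phi=|\zeta|^{2\gamma}+A\log|\zeta|+O(|\zeta|^{-c})$, so the right side is only $O(|\zeta|^{2\gamma-2}\log|\zeta|)$ for large $\zeta$. Since on $\Omega_2$ one has $|\zeta|\lesssim|z|^{\alpha_0\beta_1/\gamma-1}$, the ratio of the two sides is $\lesssim|z|^{2(\gamma-\beta_1)}\,|\log|z||\to0$, giving $\omega>0$ on $\Omega_2$ near the origin. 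On the gluing region I would write $\omega=\omega_0+i\dd(\chi_2 D)$ with $D:=|z|^{2\gamma}\phi(\zeta)-|w|^{2\gamma}=A|z|^{2\gamma}\log|\zeta|+O(|z|^{2\gamma}|\zeta|^{-c})$; the point is that here $|\zeta|\sim|z|^{\alpha_0\beta_1/\gamma-1}$ is large (because $\alpha_0<\gamma/\beta_1$), so $D$ and its derivatives, including those falling on the cutoff through $\de(Rr^{-\alpha_0})$, are small relative to $\omega_0$. Showing that these error terms keep $\omega_0+i\dd(\chi_2 D)>0$ is the main obstacle: it is the one place where the construction could fail, and it is exactly where the matching asymptotics of Proposition \ref{propmetriconC} and the admissible range $\alpha_0\in(1,\gamma/\beta_1)$ are used.

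For the transverse cone angles, near $L_1\setminus\{0\}$ we lie in $\Omega_1$ (there $R$ is bounded below and $r\to0$, so $Rr^{-\alpha_0}\to\infty$), where $\omega=\beta_1^2|z|^{2\beta_1-2}\,i\,dz\,d\bar w\wedge$-type diagonal form $\beta_1^2|z|^{2\beta_1-2}i\,dz\,d\bar z+\gamma^2|w|^{2\gamma-2}i\,dw\,d\bar w$ has the standard angle $2\pi\beta_1$ along $\{z=0\}$ and is smooth in $w$. Near $L_j\setminus\{0\}$ for $j=2,3$ we lie in $\Omega_2$ on a small enough neighbourhood (along $L_j$ one has $Rr^{-\alpha_0}\sim|z|^{\gamma-\alpha_0\beta_1}\to0$ since $\gamma-\alpha_0\beta_1>0$), and $\zeta=w/z\to a_j$; Lemma \ref{lem:LOCREGFM} writes $\phi=e^{S_1}|\zeta-a_j|^{2\beta_j}+S_2$, and since $|\zeta-a_j|^{2\beta_j}=|z|^{-2\beta_j}|\ell_j|^{2\beta_j}$ the potential contains $|\ell_j|^{2\beta_j}$ multiplied by a smooth positive factor, yielding the standard cone angle $2\pi\beta_j$ transverse to $L_j$.

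For the tangent cone I would use the cone rescaling $s_\lambda(z,w)=(\lambda^{1/\beta_1}z,\lambda^{1/\gamma}w)$, under which $\omega_0$ is fixed: homogeneity of degree $2$ of its potential gives $\lambda^{-2}s_\lambda^*\omega_0=\omega_0$. Setting $\omega_\lambda:=\lambda^{-2}s_\lambda^*\omega=\omega_0+i\dd\big[\lambda^{-2}s_\lambda^*(\chi_2 D)\big]$, the rescaled cutoff is $\chi_2(\lambda^{1-\alpha_0}Rr^{-\alpha_0})$, supported in $\{Rr^{-\alpha_0}\lesssim\lambda^{\alpha_0-1}\}$, a neighbourhood of the collision line $\{w=0\}$ that shrinks as $\lambda\to0$ because $\alpha_0>1$; on it the scaled potential $\lambda^{-2}s_\lambda^*(\chi_2 D)$ tends to $0$, using $\gamma>\beta_1$ together with $\phi(\zeta)\sim|\zeta|^{2\gamma}$. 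Hence $\omega_\lambda\to\omega_0$ in $C^\infty_{loc}$ off $\{zw=0\}$, and since these are conical Kähler metrics with uniformly controlled geometry this smooth convergence away from the codimension-two singular set upgrades to pointed Gromov–Hausdorff convergence, identifying the tangent cone of $\omega$ at $0$ with $(\C^2,\omega_0,0)=\cone$.
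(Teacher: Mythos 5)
Your setup coincides with the paper's: the same three regions, the same reduction of positivity on $\Omega_2$ to a determinant inequality (your identity $\det(u_{i\bar j})=\beta_1^2|z|^{2\beta_1+2\gamma-4}\phi_{\zeta\bar\zeta}+\gamma^2|z|^{4\gamma-4}(\phi\,\phi_{\zeta\bar\zeta}-|\phi_\zeta|^2)$ is exactly the computation in region $\mathbf{V}$ of Proposition \ref{propriccipot}, and Lemma \ref{lem:COLLREG} proves the corresponding positivity), and the same use of Lemma \ref{lem:LOCREGFM} for the angles along $L_2,L_3$ (which the paper routes through Lemma \ref{localCSinglemma}; note you still need its hypothesis that the smooth part is nondegenerate \emph{along} the line, verified there via $\psi|_{L_j}=|z|^{2\beta_1}+\phi(a_j)|z|^{2\gamma}$). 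Two of your positivity steps are incomplete but fillable: on $\Omega_2$ you dominate $|\phi_\zeta|^2-\phi\,\phi_{\zeta\bar\zeta}$ only for large $|\zeta|$, yet $\Omega_2$ contains the lines $L_2,L_3$ where $\zeta\to a_j$; there you need the bound $|\phi_\zeta|^2\phi_{\zeta\bar\zeta}^{-1}\leq C$ from the Corollary to Lemma \ref{lem:LOCREGFM} together with boundedness of $\phi$. And on the gluing region you assert, but never prove, that $i\partial\bar\partial(\chi_2 D)$ is small relative to $\omega_{\cone}$; that estimate (including the derivatives hitting the cutoff) is precisely the paper's Lemma \ref{lem:gluingreg}, so as written this step is named rather than done.

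The genuine gap is the final step of your tangent cone argument. Smooth convergence (indeed eventual equality) $\omega_\lambda=\omega_0$ away from $\{w=0\}$ does \emph{not} upgrade to Gromov--Hausdorff convergence by an appeal to ``uniformly controlled geometry'': no such uniform control holds across the collision region, and none of the soft theories apply (the Ricci curvature of $\omega$ is not controlled, only its Ricci potential, as the paper stresses in Section \ref{sect:SCHAUDER}). Concretely, $\omega_\lambda$ has cone lines $\{w=a_j\lambda^{1/\beta_1-1/\gamma}z\}$ collapsing onto $\{w=0\}$, and at a point $(z,0)$ between them the $dw\,d\bar w$-coefficient of $\omega_\lambda$ is comparable to $\gamma^2|\lambda^{1/\beta_1-1/\gamma}z|^{2\gamma-2}$, hence finite, while that of $\omega_0$ is $\gamma^2|w|^{2\gamma-2}=\infty$; so $\omega_\lambda$ is \emph{not} bounded below by any fixed multiple of $\omega_0$ near $\{w=0\}$, and a priori paths could shortcut through the collision region, shrinking distances in the limit. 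What rules this out is a distance-level, not pointwise, statement: the flat metrics $\omega_{F,\mu}$ with cone points $\mu a_2,\mu a_3$ converge uniformly in distance to $\C_\gamma$ as $\mu\to 0$, which works precisely because the angle deficits add, $(1-\beta_2)+(1-\beta_3)=1-\gamma$. This is the paper's Lemma \ref{lem:FMTGCONE}, which, combined with the uniform equivalence $|\omega-\omega_H|_\omega=O(\rho^{2c})$ coming from Lemmas \ref{lem:COLLREG} and \ref{lem:gluingreg}, yields $\|d_\lambda-d_{\cone}\|_{L^\infty}\to 0$ in Lemma \ref{lem:TGCONE}. Your proposal neither proves nor cites anything playing this role, so item (2) of the proposition is not established.
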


The rest of this section is dedicated to the proof of Proposition \ref{propaproxsol}. We restrict to smaller neighbourhoods of the origin whenever necessary. Let's begin by analysing the K\"ahler potential of the approximate solution,
\[\psi = |z|^{2\beta_1} + \chi_1 |w|^{2\gamma} + \chi_2 |z|^{2\gamma} \phi(z^{-1}w) . \]

\begin{lemma}
	The potential \(\psi\) is smooth on the complement of the three lines and it extends continuously over them.
\end{lemma}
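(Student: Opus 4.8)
The plan is to prove the two assertions separately, and the organizing observation is that each cutoff $\chi_i(Rr^{-\alpha_0})$ is \emph{locally constant} precisely in the regions where its ingredients $R=|w|^\gamma$ and $r=|z|^{\beta_1}$ fail to be smooth, namely near $\{w=0\}$ (where $Rr^{-\alpha_0}\to 0$, so $\chi_2\equiv 1$) and near $\{z=0\}$ (where $Rr^{-\alpha_0}\to \infty$, so $\chi_1\equiv 1$). I first record that $a_2,a_3\neq 0$: if $a_2=0$ then the centre-of-mass relation $(1-\beta_2)a_2+(1-\beta_3)a_3=0$ with $\beta_3<1$ forces $a_3=0=a_2$, contradicting $L_2\neq L_3$; hence $0$ is not a cone point of $\phi$.

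For smoothness away from the three lines, I would fix $p=(z,w)$ with $z\neq 0$, $w\neq a_2 z$, $w\neq a_3 z$, and split into two cases. If also $w\neq 0$, then $r,R$ are smooth and positive near $p$, so $Rr^{-\alpha_0}$ and the cutoffs are smooth, each of $|z|^{2\beta_1},|w|^{2\gamma},|z|^{2\gamma}$ is smooth, and $\phi(w/z)$ is smooth since $w/z\notin\{a_2,a_3\}$ and $\phi$ is smooth off its cone points; this covers the gluing region $1\le Rr^{-\alpha_0}\le 2$ automatically. If $w=0$ (so $z\neq 0$), then $Rr^{-\alpha_0}\to 0$, so near $p$ we have $\chi_2\equiv 1,\chi_1\equiv 0$ and $\psi=|z|^{2\beta_1}+|z|^{2\gamma}\phi(w/z)$, which is smooth because $w/z\to 0$ and $\phi$ is smooth near $0$.

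For the continuous extension I treat each line in the region its cutoff selects. Near $L_1\setminus\{0\}$ we have $w\neq 0$ and $r\to 0$, hence $Rr^{-\alpha_0}\to\infty$, $\chi_1\equiv 1$, and $\psi=|z|^{2\beta_1}+|w|^{2\gamma}$ is manifestly continuous with boundary value $|w|^{2\gamma}$. Near $L_2\setminus\{0\}$ (and symmetrically $L_3$) the constraint $\alpha_0<\gamma/\beta_1$ gives $Rr^{-\alpha_0}\approx|a_2|^\gamma|z|^{\gamma-\beta_1\alpha_0}\to 0$, so $\chi_2\equiv 1$ in a neighbourhood and $\psi=|z|^{2\beta_1}+|z|^{2\gamma}\phi(w/z)$; inserting the local form $\phi=e^{S_1}|v-a_2|^{2\beta_2}+S_2$ of Lemma~\ref{lem:LOCREGFM} at $v=w/z$ and using $2\gamma-2\beta_2=2\beta_3-2$ yields the term $e^{S_1(w/z)}|z|^{2\beta_3-2}|w-a_2 z|^{2\beta_2}$, whose factor $|w-a_2 z|^{2\beta_2}\to 0$ while $|z|^{2\beta_3-2}$ stays finite for $z\neq 0$. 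Together with the smooth remainder $|z|^{2\gamma}S_2(w/z)$, this exhibits $\psi$ as a jointly continuous function near $(z_0,a_2 z_0)$, so it extends continuously across $L_2\setminus\{0\}$.

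The genuine difficulty is continuity at the origin, where the gluing and the behaviour of $\phi$ at infinity interact, and I expect this to be the main obstacle. Here I would show $\psi\to 0$ by bounding the only delicate term $\chi_2|z|^{2\gamma}\phi(w/z)$ on the support of $\chi_2$, where $|w|^\gamma\le 2|z|^{\beta_1\alpha_0}$. Splitting according to whether $|w/z|$ is bounded or large, one uses continuity of $\phi$ on compacta in the first case, and the expansion $\phi(\zeta)=|\zeta|^{2\gamma}+A\log|\zeta|+O(|\zeta|^{-c})$ from Proposition~\ref{propmetriconC} in the second. The leading term gives $|z|^{2\gamma}|w/z|^{2\gamma}=|w|^{2\gamma}\le 4|z|^{2\beta_1\alpha_0}\to 0$; the logarithmic term is handled via $M|z|\le|w|\le 2^{1/\gamma}|z|^{\beta_1\alpha_0/\gamma}$, which forces $\log|w/z|=O(|\log|z||)$ and hence $|z|^{2\gamma}\log|w/z|\to 0$; and the remainder gives $|z|^{2\gamma+c}|w|^{-c}=O(|z|^{2\gamma})\to 0$. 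This is the one place where the exponent constraint $\alpha_0\in(1,\gamma/\beta_1)$ and the subleading asymptotics of $\phi$ must be combined.
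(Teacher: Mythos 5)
Your proof is correct and takes essentially the same approach as the paper: the same decomposition into the regions determined by the cutoff (where $\psi$ reduces to the explicit forms $|z|^{2\beta_1}+|w|^{2\gamma}$ and $|z|^{2\beta_1}+|z|^{2\gamma}\phi(w/z)$), and the same treatment of continuity at the origin by splitting into $|w/z|$ bounded (boundedness of $\phi$ on compacta) versus $|w/z|$ large (the expansion of Proposition \ref{propmetriconC}), which is precisely the paper's split at $|w|=\Lambda|z|$. The only cosmetic differences are your explicit verification that $a_2,a_3\neq 0$ and your use of the local form in Lemma \ref{lem:LOCREGFM} across $L_2,L_3$, where the paper simply invokes the continuity of $\phi$ at the cone points.
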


\begin{proof}
	If \(\Lambda>0\) is such that \(|a_2|, |a_3| < \Lambda\) then we can use the expansion \eqref{fmpotential} on the region \( |w|>\Lambda |z|\) and write
	\begin{align*}
	\psi &= |z|^{2\beta_1} + |w|^{2\gamma} + \chi_2 |z|^{2\gamma} \left(A \log |z^{-1}w| + \phi_0 \right) \\
	&= |z|^{2\beta_1} + |w|^{2\gamma} + O(|z|^{2\gamma}(-\log |z|) ) \\
	&= O(\rho^2) .
	\end{align*}
	On the other hand, if \(|w| \leq \Lambda |z|\) then \(\phi(z^{-1}w)\) is uniformly bounded  and therefore \(\psi =  O(|z|^{2\beta_1})\) on this region. We conclude that \(\psi = \rho^2 +\) (higher order terms) around \(0\). In particular \(\psi\) extends continuously over the origin with \(\psi(0)=0\). 
	
	Let \(s=Rr^{-\alpha_0}\) and \(\kappa = \alpha_0 \beta_1/\gamma\), so \(\beta_1/\gamma < \kappa < 1\). On  \(\C^2 \setminus \{0\}\), we have:
	\begin{itemize}
		\item \(\psi = |z|^{2\beta_1} + |w|^{2\gamma}\) if \(s>2\), that is \(|w|>2^{1/\gamma}|z|^{\kappa}\).
		\item \(\psi\) is smooth if \( 1/2 < s < 4 \).
		\item \(\psi = |z|^{2\beta_1} + |z|^{2\gamma}\phi(z^{-1}w)\) if \(s<1\), that is \(|w|<|z|^{\kappa}\).
	\end{itemize}
	We see that \(\psi\) extends continuously over \(\{z=0\}\) by \(\psi=|w|^{2\gamma}\). Since \(\phi: \C \to \R\) is continuous and smooth on \(\C \setminus \{a_2, a_3\}\), we see that \(\psi\) extends continuously over \(L_2\) by \(\psi|_{L_2} = |z|^{2\beta_1} + \phi(a_2)|z|^{2\gamma}\) and similarly for \(L_3\). Note that, because \(\phi\) is smooth at \(0 \in \C\), then \(\psi\) is extends smoothly over \(\{w=0\}\) outside the origin. 
\end{proof}

The restriction of \(\omega_{\cone}\) to a complex line \(L=\{w=az\}\) agrees with \(i \dd (|z|^{2\beta_1} + |a|^{2\gamma}|z|^{2\gamma})\) and has cone angle \(2\pi\beta_1\) at the origin; while its restriction to \(\{z=0\}\) agrees with \(\C_{\gamma}\). The approximate solution \(\omega\) exhibits this same behaviour 
Given a complex line \(L=\{w=az\}\), we have \(\omega|_L = i \dd (|z|^{2\beta_1}+\phi(a)|z|^{2\gamma})\) provided \(|z|\) is sufficiently small but how small depends on the line. 

Next we want to show that \(\omega\) is indeed a positive form. We begin by analysing the region that contains the colliding lines \(L_2\) and \(L_3\). Recall that \(R=|w|^{\gamma}\) and \(r=|z|^{\beta_1}\).

\begin{lemma} \label{lem:COLLREG}
	There is \(\delta>0\) such that 
	\begin{equation} \label{aproxsolV}
		i\dd \left(|z|^{2\beta_1} + |z|^{2\gamma}\phi(z^{-1}w)\right) >0
	\end{equation} 
	on \(\{R < \delta r\}\) and it is uniformly equivalent to the Hermitian metric 
	\begin{equation} \label{eq:HERMMET}
		\omega_H = \beta_1^2 |z|^{2\beta_1-2} i dz d\bar{z} + \gamma^2 |w-a_2z|^{2\beta_2-2}|w-a_3z|^{2\beta_3-2} idwd\bar{w}.
	\end{equation}
\end{lemma}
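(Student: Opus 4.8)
The plan is to compute the complex Hessian of $\Psi_0 := |z|^{2\beta_1} + |z|^{2\gamma}\phi(z^{-1}w)$ directly in the coordinate $u := z^{-1}w$, in which the two cone points of $\phi$ sit at $u=a_2,a_3$, and then to compare it with $\omega_H$ entry by entry. Writing $\phi_u,\phi_{\bar u},\phi_{u\bar u}$ for the derivatives of $\phi$ and using $\partial_w u = z^{-1}$, $\partial_z u = -u\,z^{-1}$, a routine differentiation gives, for $H := i\,\partial\bar\partial\Psi_0$,
\[
H_{w\bar w} = |z|^{2\gamma-2}\phi_{u\bar u}, \qquad H_{z\bar w} = |z|^{2\gamma-2}\big(\gamma\phi_{\bar u} - u\,\phi_{u\bar u}\big),
\]
\[
H_{z\bar z} = \beta_1^2|z|^{2\beta_1-2} + |z|^{2\gamma-2}\big(\gamma^2\phi - \gamma(u\phi_u + \bar u\phi_{\bar u}) + |u|^2\phi_{u\bar u}\big).
\]
The crucial algebraic point is that, since $\gamma=\beta_2+\beta_3-1$ and $|w-a_jz|=|z|\,|u-a_j|$, the identity $i\,\partial\bar\partial\phi=\omega_F$ gives $\phi_{u\bar u}=\gamma^2|u-a_2|^{2\beta_2-2}|u-a_3|^{2\beta_3-2}$, so that $H_{w\bar w}=|z|^{2\gamma-2}\phi_{u\bar u}$ coincides \emph{exactly} with the $w\bar w$--entry of $\omega_H$.

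Next I would assemble the determinant. Expanding $\det H = H_{z\bar z}H_{w\bar w}-|H_{z\bar w}|^2$, the terms involving $u\phi_u+\bar u\phi_{\bar u}$ and $|u|^2\phi_{u\bar u}$ telescope against $|H_{z\bar w}|^2$, leaving the clean identity
\[
\det H = \det\omega_H + \gamma^2|z|^{4\gamma-4}\big(\phi\,\phi_{u\bar u} - |\phi_u|^2\big), \qquad \det\omega_H = \beta_1^2|z|^{2\beta_1+2\gamma-4}\phi_{u\bar u}.
\]
Both positivity and the comparison with $\omega_H$ then reduce to showing that the relative error
\[
\frac{\gamma^2}{\beta_1^2}\,|z|^{2(\gamma-\beta_1)}\,G(u), \qquad G(u):=\phi - \frac{|\phi_u|^2}{\phi_{u\bar u}},
\]
is smaller than $\tfrac12$ on $\{R<\delta r\}$: indeed then $\det H\ge\tfrac12\det\omega_H>0$, and together with $H_{w\bar w}>0$ this forces $H>0$, while $\det H/\det\omega_H\to1$; combined with the exact matching of the $w\bar w$--entries this yields the asserted uniform comparison with $\omega_H$.

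The heart of the argument is a uniform bound on $G$. On any compact $u$--set, and in particular near the cone points $a_2,a_3$, $G$ is bounded, since $\phi$ is continuous and $|\phi_u|^2/\phi_{u\bar u}$ is bounded by the Corollary following Lemma \ref{lem:LOCREGFM}. At infinity I would invoke the expansion $\phi=|u|^{2\gamma}+A\log|u|+\phi_0$ of Proposition \ref{propmetriconC}: the pure cone $|u|^{2\gamma}$ satisfies $\phi\,\phi_{u\bar u}-|\phi_u|^2\equiv 0$, so only the subleading terms contribute and $G=O(\log|u|)$. Hence $G(u)\le C(1+\log^+|u|)$ on all of $\C$. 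Finally, on $\{R<\delta r\}$ one has $|u|=|w|/|z|\lesssim|z|^{(\beta_1-\gamma)/\gamma}$, so $\log^+|u|=O(\log(1/|z|))$; thus the error is $O\big(|z|^{2(\gamma-\beta_1)}\log(1/|z|)\big)$, which tends to $0$ as $z\to0$ precisely because $\gamma>\beta_1$. Restricting to a small enough neighbourhood of the origin makes it $<\tfrac12$.

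The main obstacle is exactly this uniform control over a region that stretches to $|u|=\infty$ as $z\to0$ and that meets the singular lines $L_2,L_3$. Two cancellations make it go through and deserve emphasis: near the cone points the would-be singular $|u|^2\phi_{u\bar u}$ contributions cancel in the \emph{determinant} (which is why one compares determinants and uses the boundedness of $|\phi_u|^2/\phi_{u\bar u}$ rather than comparing entries individually), and at infinity the leading cone term cancels identically, leaving only logarithmic growth. That logarithm is defeated by the factor $|z|^{2(\gamma-\beta_1)}$, and this is the single place where the instability hypothesis $\gamma>\beta_1$, equivalent to \eqref{anglecond}, is used.
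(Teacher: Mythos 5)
Your computations are correct and the first half of your argument genuinely works: the Hessian entries in $(z,w)$-coordinates, the identity $\det H = \det\omega_H + \gamma^2|z|^{4\gamma-4}\bigl(\phi\,\phi_{u\bar u}-|\phi_u|^2\bigr)$ with $\det \omega_H=\beta_1^2|z|^{2\beta_1+2\gamma-4}\phi_{u\bar u}$, the bound $|G|\leq C(1+\log^+|u|)$ (bounded near $a_2,a_3$ by the Corollary following Lemma \ref{lem:LOCREGFM}, logarithmic at infinity by Proposition \ref{propmetriconC}), and the observation that $|u|\lesssim |z|^{(\beta_1-\gamma)/\gamma}$ on $\{R<\delta r\}$ all check out; since a $2\times 2$ Hermitian matrix with one positive diagonal entry and positive determinant is positive definite, this proves the positivity claim after shrinking to $|z|^{2(\gamma-\beta_1)}\log(1/|z|)\ll1$. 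This determinant cancellation is in fact the same one the paper exploits in region $\mathbf{V}$ of the Ricci-potential estimate (Proposition \ref{propriccipot}), so as a route to \emph{positivity} your argument is a clean alternative to the paper's.

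The gap is in the equivalence half: for a $2\times2$ Hermitian matrix, the determinant plus one diagonal entry do not control the matrix. For instance
\[
\begin{pmatrix} M^2+1 & M \\ M & 1 \end{pmatrix}
\]
has determinant $1$ and lower-right entry $1$ but is not uniformly comparable to the identity; one must also bound the off-diagonal entry $H_{z\bar w}=|z|^{2\gamma-2}(\gamma\phi_{\bar u}-u\,\phi_{u\bar u})$. The failure is not a technicality here. Writing $\Pi(u)=|u-a_2|^{2\beta_2-2}|u-a_3|^{2\beta_3-2}$, the warped-product metric
\[
\omega_{\mathrm{w}} \;=\; \beta_1^2|z|^{2\beta_1-2}\,i\,dz\,d\bar z \;+\; \gamma^2|z|^{2\gamma-2}\,\Pi(u)\; i\,(dw-u\,dz)\wedge(d\bar w-\bar u\,d\bar z)
\]
has \emph{exactly} the same determinant and the same $dw\,d\bar w$-coefficient as $\omega_H$, yet $\omega_{\mathrm{w}}$ and $\omega_H$ are not uniformly equivalent near $L_2\cup L_3$: on the vector $V=\partial_z+u\,\partial_w$ (tangent to $L_2$ when $u=a_2$) one has $\omega_{\mathrm{w}}(V,\bar V)=\beta_1^2|z|^{2\beta_1-2}$ bounded, while $\omega_H(V,\bar V)=\beta_1^2|z|^{2\beta_1-2}+\gamma^2|z|^{2\gamma-2}\Pi(u)|u|^2\to\infty$ as $u\to a_2$ (since $a_2\neq 0$ and $\beta_2<1$). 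So the two facts you establish cannot decide which metric, if either, $H$ is comparable to. The paper supplies exactly the missing ingredient: it passes to the coordinates $(z,\xi=w/z)$, compares $H$ entry-by-entry with $\omega_{\mathrm{w}}$, and bounds the off-diagonal coefficient by $|B|\leq C|z|^{\gamma-\beta_1}(|\xi|^\gamma+1)$ — using Lemma \ref{lem:LOCREGFM} near the cone points and the expansion of Proposition \ref{propmetriconC} at infinity — which is what dictates $\delta=1/(2C)$ and yields uniform equivalence with $\omega_{\mathrm{w}}$. Note also that, by the computation above combined with $H\sim\omega_{\mathrm{w}}$, the equivalence with $\omega_H$ in the form stated can only hold away from $L_2\cup L_3$ (where $\Pi(u)|u|^2|z|^{2\gamma-2\beta_1}$ stays bounded); so the warped product, not $\omega_H$, is the metric your argument should target, and to reach it you need the off-diagonal bound, not just the determinant.
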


\begin{proof}
	Let \(\xi=w/z\). On the complement of \(\{z=0\}\) we use the map 
	\[(z, \xi) \to (z, w=z\xi)\] 
	and pull-back the form \eqref{aproxsolV}. Recall that
	\[\omega_F = \gamma^2 |\xi-a_2|^{2\beta_2-2}|\xi-a_3|^{2\beta_3-2}id\xi d\bar{\xi} .\]
	We will show that \eqref{aproxsolV} is uniformly equivalent to the warped product Hermitian metric 
	\[ \omega_\mathrm w = \omega_{\C_{\beta_1}} + |z|^{2\gamma} \omega_F = i\eta_1 \wedge \bar{\eta}_1 + i \eta_2 \wedge \bar{\eta}_2 , \]
	with
	\[\eta_1 = \beta_1 |z|^{\beta-1} dz, \hspace{2mm} \eta_2 = \gamma |z|^{\gamma} |\xi-a_2|^{\beta_2-1} |\xi-a_3|^{\beta_3-2}d\xi . \]
	
	In the coordinates \((z, \xi)\), we have
	\begin{equation} \label{eq:collreg1}
		i \dd \left(|z|^{2\beta_1} + |z|^{2\gamma} \phi (\xi) \right) 
		= \left(\omega_{\C_{\beta_1}} + \phi(\xi)\gamma^2|z|^{2\gamma-2}idzd\bar{z}\right) + |z|^{2\gamma} \omega_F + E .	
	\end{equation}
	First, note that
	\[\big|\phi(\xi)|z|^{2\gamma-2}idzd\bar{z}|_{\omega_\mathrm w}=O(|z|^{2\gamma-2\beta_1}|\xi\big|^{2\gamma}).\]
	
	The off diagonal term on the r.h.s. of equation \eqref{eq:collreg1} is
	\begin{align*}
	E &= \frac{\p^2}{\p z \p \bar{\xi}} \left( |z|^{2\gamma} \phi (\xi) \right) idz d\bar{\xi} + \mbox{(conjugate)} \\
	&= B i\eta_1 \wedge \bar{\eta}_2 + \mbox{(conjugate)} 
	\end{align*}
	where 
	\begin{align*}
		|B| &= |z|^{2\gamma-1} \left|\frac{\p \phi}{\p \xi}\right| |z|^{1-\beta_1} |z|^{-\gamma} |\xi-a_2|^{1-\beta_2} |\xi-a_3|^{1-\beta_3} \\
		&= |z|^{\gamma-\beta_1} \left( \left|\frac{\p \phi}{\p \xi}\right| |\xi-a_2|^{1-\beta_2} |\xi-a_3|^{1-\beta_3} \right).
	\end{align*}
	By Lemma \ref{lem:LOCREGFM}, the function \(|\p \phi / \p \xi| |\xi-a_2|^{1-\beta_2} |\xi-a_3|^{1-\beta_3}\) is uniformly bounded in a neighbourhood of the singularities \(\xi= a_2, a_3\). Furthermore, using that \(|\p \phi / \p \xi |= O(|\xi|^{2\gamma-1})\) as \(|\xi| \to \infty\), together with \(\gamma-1 = (\beta_2-1) + (\beta_3-1)\), we find that there is  \(C>0\) such that for all \(\xi \in \C\)
	\[\left|\frac{\p \phi}{\p \xi}\right| |\xi-a_2|^{1-\beta_2} |\xi-a_3|^{1-\beta_3} \leq C (|\xi|^{\gamma} + 1) .\]
Hence
\[
	|B| \leq C |z|^{\gamma-\beta_1} (|\xi|^{\gamma} +1) .
\]

	We conclude that if
	\begin{equation} \label{regioncondition}
		|\xi|^{\gamma} < (1/2C) |z|^{\beta_1-\gamma} ,
	\end{equation}
	then \(i \dd \left(|z|^{2\beta_1} + |z|^{2\gamma} \phi (\xi) \right)\) is uniformly equivalent to \(\omega_\mathrm w\).\footnote{Note that, since \(\gamma-\beta_1>0\), if \(|z|\) is sufficiently small, say \(|z|^{\gamma-\beta_1} < 1/(4C)\), then $|B|< 3/4$ is uniformly bounded.} Finally, replacing \(\xi=w/z\), we see that Equation \eqref{regioncondition} is equivalent to \(|w|^{\gamma}<(1/2C)|z|^{\beta_1}\). That is, \(R<\delta r\) with \(\delta=1/2C\).	
	
	A similar computation shows that, on the region \(\{R<\delta r\}\), 
	\[ | (\omega_{\C_{\beta_1}} + |z|^{2\gamma} \omega_F) - \omega_H |_{\omega_H} = O(r^{-1}R) \]
	which proves the claim.
\end{proof}

Note that \(\omega = i\dd \left(|z|^{2\beta_1} + |z|^{2\gamma}\phi(z^{-1}w)\right)\) on \(\{R<r^{\alpha_0}\}\). In this region \(\rho \sim r\), and the proof of Lemma \ref{lem:COLLREG} gives
\[|\omega - \omega_H|_{\omega_H} = O(\rho^{\alpha_0-1}) . \]
In particular, \(\omega\) is well approximated by \(\omega_{\C_{\beta_1}}+\rho^{2\gamma/\beta_1}\omega_F\) on the colliding region \(\{R<r^{\alpha_0}\}\). See Figure \ref{fig:three lines}.

\begin{figure}
	\centering
	\scalebox{0.8}{
	\begin{tikzpicture}
	
	\centerarc[blue, thin](4,0)(110:250:4);
	\centerarc[red, thin](2.5,0)(100:260:2.5);
	
	\centerarc[blue, thin](-4,0)(-70:70:4);
	\centerarc[red, thin](-2.5,0)(-80:80:2.5);
	
	\draw[blue, dashed] (-2, 3.5) to (2, 3.5);
	\draw[blue, dashed] (-1, 2.7) to (1, 2.7);
	\draw[blue, dashed] (-.3, 1.5) to (.3, 1.5);
	
	\draw[blue, dashed] (-2, -3.5) to (2, -3.5);
	\draw[blue, dashed] (-1, -2.7) to (1, -2.7);
	\draw[blue, dashed] (-.3, -1.5) to (.3, -1.5);
	
	\draw[red, dashed] (1.6, 2.2) to (1.6,-2.2);
	\draw[red, dashed] (1.1, 2) to (1.1,-2);
	\draw[red, dashed] (.6, 1.5) to (.6,-1.5);
	
	\draw[red, dashed] (-1.6, 2.2) to (-1.6,-2.2);
	\draw[red, dashed] (-1.1, 2) to (-1.1,-2);
	\draw[red, dashed] (-.6, 1.5) to (-.6,-1.5);

	\draw [thin](-2,0) to (4,0);
	\draw [thick](0,-2) to (0,4);
	\draw [thick](-2, -0.5) to (4, 1);
	\draw [thick](-2, 1) to (4, -2);

	\draw (0.5, 3.1) ellipse (.05cm and .15cm);
	\draw (0, 3.1) to (0.5, 3.25);
	\draw (0, 3.1) to (0.5, 2.95);
	
	\draw (3, 1.25) ellipse (.25cm and .05cm);
	\draw (3, .75) to (2.75, 1.25);
	\draw (3, .75) to (3.25, 1.25);
	
	\draw (2, -0.5) ellipse (0.35cm and 0.05cm);
	\draw (1.65,-0.5) to (2,-1);
	\draw (2,-1) to (2.35,-0.5);
	
	\draw  (4, 0.2) node (asd) [scale=.9] {\(z\)};
	\draw  (0.2, 4) node (asd) [scale=.9] {\(w\)};
	\draw  (.9, 3.1) node (asd) [scale=.7] {\(2\pi\beta_1\)};
	\draw  (3, 1.5) node (asd) [scale=.7] {\(2\pi\beta_2\)};
	\draw  (2, -0.25) node (asd) [scale=.7] {\(2\pi\beta_3\)};
	
	\draw[thin](6,-1) to (12,-1);
	\draw[thick](6,-1) to (6,3);
	
	\centerarc[red, thin](6,9)(270:305:10);
	\draw[red, dashed] (9, -1) to (9,-.6);
	\draw[red, dashed] (10, -1) to (10,-.3);
	\draw[red, dashed] (11, -1) to (11, .3);
	
	\centerarc[blue, thin](6,4)(270:340:5);
	\draw[blue, dashed] (6, 0) to (8.9,0);
	\draw[blue, dashed] (6, 1) to (10,1);
	\draw[blue, dashed] (6, 2) to (10.5,2);
	
	\centerarc[thick](6,19)(270:288:20);
	\centerarc[thick](6,25)(270:284:26);
	
	\draw  (12, -1.2) node (asd) [scale=.9] {\(r\)};
	\draw  (6.2, 3) node (asd) [scale=.9] {\(R\)};
	\draw  (12, .2) node (asd) [scale=.9] {\(L_2\)};
	\draw  (12.1, -.6) node (asd) [scale=.9] {\(L_3\)};
	\end{tikzpicture}
	}
	\caption{The approximate solution in complex and polar coordinates. On the blue region \(\omega= \omega_{\cone}\), while \(\omega \sim \omega_{\C_{\beta_1}} + \rho^{2\gamma/\beta_1} \omega_F\) on the red part. The lines \(L_2\) and \(L_3\) approach  each other at super-linear rate \(\rho^{\gamma/\beta_1}\).} 
	\label{fig:three lines}
\end{figure}
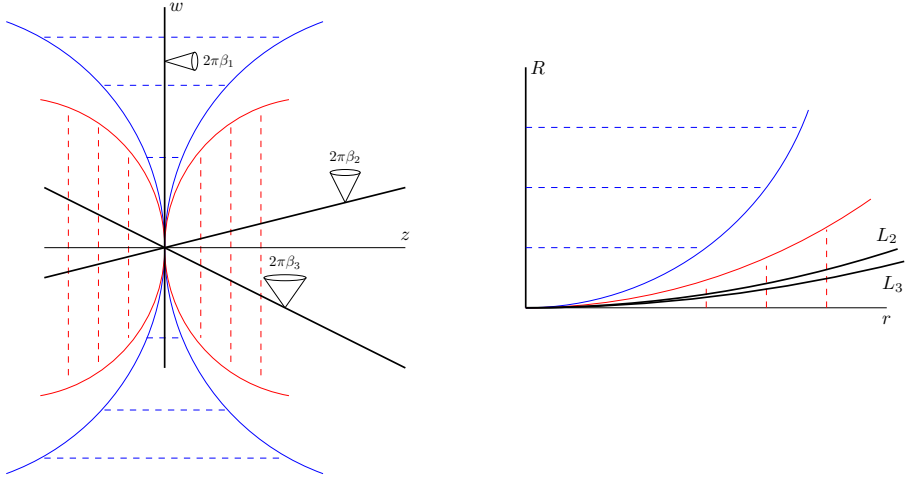

\begin{lemma} \label{lem:gluingreg}
	Recall that \(\beta_1/\gamma<\kappa=\alpha_0 \beta_1/\gamma<1\). On the gluing region \(|w| \sim |z|^{\kappa}\) we have
	\begin{equation*}
		\omega = \omega_{\cone} + E ,
	\end{equation*}
	with \(|E|_{\cone} = O(\rho^c)\) for some \(c>0\).
\end{lemma}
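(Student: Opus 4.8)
```latex
\textbf{Proof proposal.}
The goal is to estimate the difference between the approximate solution $\omega$ and the model cone metric $\omega_{\cone}$ on the gluing annulus $|w| \sim |z|^{\kappa}$, where $\kappa = \alpha_0 \beta_1/\gamma \in (\beta_1/\gamma, 1)$. In this region both cutoffs $\chi_1$ and $\chi_2$ (as well as their derivatives) are active, so the plan is to expand $\omega = i\dd\psi$ with $\psi = |z|^{2\beta_1} + \chi_1|w|^{2\gamma} + \chi_2|z|^{2\gamma}\phi(z^{-1}w)$ and show that every term produced by the cutoffs is of size $O(\rho^{2+c})$ relative to the potential $\rho^2$ of $\omega_{\cone}$, equivalently $|E|_{\cone}=O(\rho^c)$.

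First I would record the scale. On the gluing region $s = R r^{-\alpha_0} \in [1/2,4]$, so $|w|^\gamma \sim |z|^{\beta_1\alpha_0}$, giving $R \sim r^{\alpha_0}$ and hence $\rho \sim r = |z|^{\beta_1}$, with $|z|^\gamma \sim \rho^{\gamma/\beta_1} = o(\rho)$ since $\gamma > \beta_1$. The key input is Proposition \ref{propmetriconC}: on this region $|z^{-1}w| = |\xi|$ is large (of order $|z|^{\kappa-1}\to\infty$), so I may substitute the expansion \eqref{fmpotential}, namely $\phi(\xi) = |\xi|^{2\gamma} + A\log|\xi| + \phi_0(\xi)$ with $\phi_0 = O(|\xi|^{-c})$ together with its derivatives. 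The leading term reproduces the cone: $|z|^{2\gamma}|z^{-1}w|^{2\gamma} = |w|^{2\gamma}$, which matches the $|w|^{2\gamma}$ appearing in $\omega_{\cone}$; so after cancellation $\psi - (|z|^{2\beta_1}+|w|^{2\gamma})$ is governed by the subleading pieces $|z|^{2\gamma}(A\log|z^{-1}w| + \phi_0)$ multiplied against the cutoffs.

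Next I would apply $i\dd$ and sort the resulting terms by homogeneity. The logarithmic term contributes $|z|^{2\gamma}\log|z^{-1}w|$, whose complex Hessian, measured in $\omega_{\cone}$, carries an extra factor $|z|^{2\gamma-2\beta_1}(\log) = O(\rho^{2(\gamma-\beta_1)/\beta_1}\log)$ relative to unit size, which is $O(\rho^{c})$ for any $c < 2(\gamma-\beta_1)/\beta_1$ after absorbing the logarithm. The $\phi_0$ term is even smaller by its decay. The derivatives of the cutoffs produce terms involving $\chi_i'$ and $\chi_i''$ evaluated at $s\in[1/2,4]$; since $s = |w|^\gamma|z|^{-\beta_1\alpha_0}$, each $z$- or $w$-derivative of $s$ scales like $s$ times $|z|^{-1}$ or $|w|^{-1}$, so these bounded-multiplier terms again multiply the difference $|w|^{2\gamma} - |z|^{2\gamma}\phi(z^{-1}w) = |z|^{2\gamma}(-A\log|z^{-1}w| - \phi_0)$, which is lower order by exactly the same $|z|^{2\gamma}$ gain. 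Collecting, every correction beyond $\omega_{\cone}$ is $O(\rho^c)$ in $\cone$-norm, with $c = \min\{2(\gamma-\beta_1)/\beta_1 - \epsilon,\ \text{decay of }\phi_0\} > 0$.

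The main obstacle I anticipate is the bookkeeping for the cutoff-derivative terms: because $\chi_1' $ and $\chi_2'$ are supported precisely on the gluing annulus, one must verify that the cross terms $\p\chi_i \wedge \bar\p(\cdots)$ and $\chi_i'' \,\p s\wedge\bar\p s\,(\cdots)$ do not cancel the $|z|^{2\gamma}$ gain — i.e. that differentiating $s$ does not introduce negative powers of $\rho$ large enough to overwhelm it. Tracking the scaling $|\p s| \sim s|z|^{-1}$ carefully, and using $\chi_1+\chi_2 = 1$ so that $\p\chi_1 = -\p\chi_2$ to combine the two cutoff contributions against the single difference $|z|^{2\gamma}\phi(z^{-1}w) - |w|^{2\gamma}$, should show the dangerous terms all carry the saving factor $|z|^{2\gamma-2\beta_1}=O(\rho^{2(\gamma-\beta_1)/\beta_1})$. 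Once this is confirmed the estimate $|E|_{\cone}=O(\rho^c)$ follows directly.
```
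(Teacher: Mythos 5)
Your proof skeleton is the same as the paper's: substitute the expansion \eqref{fmpotential} for \(\phi\) (valid since \(|z^{-1}w|\gg 1\) on the gluing region), use \(\chi_1+\chi_2=1\) to cancel the leading \(|w|^{2\gamma}\), so that the error reduces to \(E=i\dd\left(\chi_2\,|z|^{2\gamma}(A\log|z^{-1}w|+\phi_0)\right)\), and then estimate this in the \(\cone\)-norm. The gap is in the quantitative step that you yourself flag as the crux. You assert that after careful tracking of \(|\p s|\sim s|z|^{-1}\), every dangerous term carries the saving factor \(|z|^{2\gamma-2\beta_1}=O(\rho^{2(\gamma-\beta_1)/\beta_1})\). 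That scaling is not the relevant one: in the \(\cone\)-norm a \(w\)-derivative is costed against the coframe element \(\gamma|w|^{\gamma-1}dw\), so the \(w\)-part of \(\p s\) has norm \(\sim s/R\) while the \(z\)-part has norm \(\sim s/r\). On the gluing region \(s\sim 1\), \(r\sim\rho\), and \(R\sim\rho^{\alpha_0}\) with \(\alpha_0>1\), so the \(w\)-part dominates and \(|\p \chi_2|_{\cone}=O(\rho^{-\alpha_0})\), \(|\dd\chi_2|_{\cone}=O(\rho^{-2\alpha_0})\) — much larger than the \(O(\rho^{-1})\), \(O(\rho^{-2})\) your bookkeeping suggests. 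Consequently the worst term, \((A\log|z^{-1}w|+\phi_0)\,|z|^{2\gamma}\,\dd\chi_2\), is only \(O(\rho^{2\gamma/\beta_1-2\alpha_0}(-\log\rho))\), strictly weaker than your claimed \(O(\rho^{2\gamma/\beta_1-2})\) since \(\alpha_0>1\). (The same under-counting already affects the cutoff-free log term: the mixed \(z\bar w\) component of \(i\dd(|z|^{2\gamma}\log|z^{-1}w|)\) has \(\cone\)-norm \(\sim\rho^{2\gamma/\beta_1-1-\alpha_0}\), again below your claimed rate.)

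The lemma's conclusion nevertheless survives, but for a reason your argument never invokes: the correct worst exponent \(2\gamma/\beta_1-2\alpha_0=(2\gamma/\beta_1)(1-\kappa)\) is positive precisely because \(\alpha_0<\gamma/\beta_1\), i.e. \(\kappa<1\); this is exactly the paper's conclusion \(0<c<2\gamma/\beta_1-2\kappa\gamma/\beta_1\). Your estimates as written use only \(\gamma>\beta_1\); the hypothesis \(\kappa<1\), which you restate from the lemma, plays no role in them, and with your scaling one would "prove" a decay rate that is false. The repair is local: redo the cutoff and log-term estimates with \(|\p s|_{\cone}\sim s/R\sim\rho^{-\alpha_0}\), and take any \(0<c<2\gamma/\beta_1-2\alpha_0\); the rest of your outline then goes through and coincides with the paper's proof.
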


\begin{proof}
	Since \(|z^{-1}w| \gg 1\) we can use the expansion \eqref{fmpotential} for the potential of the flat metric to get
	\begin{equation*}
		\omega = i \dd \left( |z|^{2\beta_1} + |w|^{2\gamma} + \chi_2 |z|^{2\gamma} (A\log |z^{-1}w| + \phi_0) \right) .
	\end{equation*}
	Let \(f_1 = \chi_2 |z|^{2\gamma}\) and \(f_2 = A\log |z^{-1}w| + \phi_0\); so \(E = i \dd (f_1 f_2)\). It is straightforward to check that
	\begin{equation*}
		f_1 = O(|z|^{2\gamma}), \hspace{2mm} |\p f_1|_{\cone} = O(|z|^{2\gamma-\kappa \gamma}), \hspace{2mm} |\dd f_1|_{\cone} = O(|z|^{2\gamma-2\kappa\gamma})
	\end{equation*}
	and
	\begin{equation*}
	f_2 = O(-\log |z|), \hspace{2mm} |\p f_2|_{\cone} = O(|z|^{-\beta_1}), \hspace{2mm} |\dd f_2|_{\cone} = O(|z|^{-2\beta_1}) .
	\end{equation*}
	On the gluing region \(\rho \sim r=|z|^{\beta_1}\); it follows that \(|E|_{\cone} = O(\rho^c) \) for any \(0 < c < 2\gamma/\beta_1 - 2\kappa \gamma/\beta_1\).
\end{proof}

\begin{lemma}
	\(\omega\) is a smooth K\"ahler metric on the complement of the three lines.
\end{lemma}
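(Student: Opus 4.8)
The plan is to note that smoothness and closedness are inherited for free, so that the only substantive point is positivity, which I would check region by region according to the value of \(s = Rr^{-\alpha_0}\).

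Since \(\omega = i\dd\psi\) and we have already shown that \(\psi\) is smooth on \(\C^2 \setminus (L_1 \cup L_2 \cup L_3)\), the form \(\omega\) is automatically a smooth, closed real \((1,1)\)-form on the complement of the three lines. It therefore remains to prove \(\omega > 0\), and I would do this after shrinking to a sufficiently small neighbourhood of the origin, covering the complement of the three lines by the three overlapping regions \(\{s > 2\}\), \(\{1/2 < s < 4\}\), and \(\{s < 1\}\). Their union is everything: \(s = \infty\) holds precisely along \(L_1 = \{z = 0\}\) (which is excluded from the domain), while every finite value of \(s\) falls in one of these ranges.

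On \(\{s > 2\}\) the cutoffs give \(\omega = \omega_{\cone}\), which is a K\"ahler metric off \(\{z=0\} \cup \{w=0\}\); since \(w = 0\) forces \(R = 0\) and hence \(s = 0\), the locus \(\{w=0\}\) does not meet this region, so the only singular line meeting it is \(L_1\) and \(\omega = \omega_{\cone} > 0\) away from it. On the gluing region \(\{1/2 < s < 4\}\), which is precisely \(|w| \sim |z|^{\kappa}\), Lemma \ref{lem:gluingreg} gives \(\omega = \omega_{\cone} + E\) with \(|E|_{\cone} = O(\rho^c)\); here \(z \neq 0\) and \(w \neq 0\), so \(\omega_{\cone}\) is a positive metric, and since the error is small relative to \(\omega_{\cone}\) for \(\rho\) small, positivity of \(\omega\) follows after shrinking. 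Finally, because \(\alpha_0 > 1\) we have \(r^{\alpha_0} < \delta r\) once \(r\) is small, so \(\{s < 1\} = \{R < r^{\alpha_0}\} \subset \{R < \delta r\}\); on \(\{s < 1\}\) we have \(\chi_2 \equiv 1\), whence \(\omega = i\dd(|z|^{2\beta_1} + |z|^{2\gamma}\phi(z^{-1}w))\), and Lemma \ref{lem:COLLREG} gives both \(\omega > 0\) and its uniform equivalence to \(\omega_H\) (which in particular accounts for the cone singularities of \(\omega\) along \(L_2\) and \(L_3\)).

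The step I expect to be the main obstacle is positivity on the gluing region, where neither explicit expression for \(\omega\) is manifestly positive: there everything rests on the perturbative bound \(|E|_{\cone} = O(\rho^c)\) of Lemma \ref{lem:gluingreg} together with the smallness of the neighbourhood, which lets \(\omega_{\cone}\) dominate the error. The only additional care is the bookkeeping of which of \(\{z=0\}\), \(\{w=0\}\), \(L_2\), \(L_3\) actually enters each region---in particular verifying that \(\{w=0\}\), \(L_2\) and \(L_3\) are confined to \(\{s < 1/2\}\) near the origin---so that the background form used in each region is genuinely positive there.
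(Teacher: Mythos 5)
Your proof is correct and follows essentially the same route as the paper: the same three-region decomposition according to \(s = Rr^{-\alpha_0}\) (outer region where \(\omega = \omega_{\cone}\), gluing region handled by Lemma \ref{lem:gluingreg}, collision region \(\{s<1\}\subset\{R<\delta r\}\) handled by Lemma \ref{lem:COLLREG}). The extra bookkeeping you supply --- that smoothness and closedness come for free from the smoothness of \(\psi\), that \(\{w=0\}\), \(L_2\), \(L_3\) avoid the outer and gluing regions, and that \(\alpha_0>1\) places \(\{R<r^{\alpha_0}\}\) inside \(\{R<\delta r\}\) near the origin --- is left implicit in the paper but is exactly what makes its three citations suffice.
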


\begin{proof}
	On the region \(\{R>2r^{\alpha_0}\}\) we have \(\omega = \omega_{\cone}>0\). On the gluing region \( \{ (1/3)R^{\alpha_0} < r < 3R^{\alpha_0} \} \), it follows from Lemma \ref{lem:gluingreg} that \(\omega>0\). Finally, on \(\{R<r^{\alpha_0}\}\), \(\omega>0\) thanks to Lemma \ref{lem:COLLREG}.
\end{proof}

In order to show that the approximate solution has standard conical singularities along \(L_2 \setminus \{0\}\) and \(L_3 \setminus \{0\}\) we will make use of the following general result

\begin{lemma} \label{localCSinglemma}
	Let \(0<\beta<1\) and let \(\omega\) be a smooth K\"ahler metric on \(B\setminus \{z_1=0\}\) with \(B \subset \C^2\) a ball around the origin. Assume that
	\[ \omega = \eta + i \dd (F|z_1|^{2\beta}) ,\]
	where \(\eta\) is a smooth \((1,1)\)-form such that $\eta(\partial/\partial z_2, \partial/\partial \bar{z}_2) >0$ along \(\{z_1 =0\}\) and $F$ is a smooth positive function. Then $\omega$ has standard cone singularities of angle \(2\pi\beta\) along \(\{z_1=0\}\) in a neighbourhood of the origin. 
\end{lemma}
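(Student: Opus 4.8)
The plan is to verify Donaldson's two defining conditions for a cone singularity transverse to \(D=\{z_1=0\}\): that \(\omega\) is uniformly equivalent to the flat model \(\omega_\beta = i(\beta^2|z_1|^{2\beta-2}dz_1 d\bar z_1 + dz_2 d\bar z_2)\), and that the coefficients of \(\omega\), read off in the coframe adapted to \(\omega_\beta\), are H\"older continuous in the cone distance with leading transverse coefficient a positive smooth function along \(D\). The first move is to collapse everything into a single potential. Since \(\omega\) is K\"ahler and \(i\dd(F|z_1|^{2\beta})\) is \(d\)-closed on \(B\setminus\{z_1=0\}\), the smooth form \(\eta=\omega-i\dd(F|z_1|^{2\beta})\) is a real, \(d\)-closed \((1,1)\)-form on all of \(B\); the local \(i\dd\)-lemma on a ball then yields \(\eta=i\dd h\) with \(h\) smooth, so that \(\omega=i\dd\Phi\) with \(\Phi=h+F|z_1|^{2\beta}\) on a smaller ball.

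Next I would expand \(i\dd(F|z_1|^{2\beta})\) by Leibniz as \(F\,i\dd|z_1|^{2\beta}+|z_1|^{2\beta}i\dd F + i\p F\wedge\db|z_1|^{2\beta}+i\p|z_1|^{2\beta}\wedge\db F\), and record the order of vanishing of each component. The transverse coefficient is \(g_{1\bar1}=\beta^2 F|z_1|^{2\beta-2}+O(|z_1|^{2\beta-1})+\eta_{1\bar1}\), the mixed coefficient is \(g_{1\bar2}=O(|z_1|^{2\beta-1})+\eta_{1\bar2}\), and the tangential coefficient is \(g_{2\bar2}=\eta_{2\bar2}+O(|z_1|^{2\beta})\), with all \(O\)-terms smooth away from \(D\) and uniform in \(z_2\). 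Positivity and quasi-isometry then follow from diagonal dominance near \(D\): since \(\eta_{2\bar2}(0,z_2)>0\), after shrinking \(B\) we have \(g_{2\bar2}\) bounded below by a positive constant while \(g_{1\bar1}\sim F|z_1|^{2\beta-2}\to+\infty\), and the off-diagonal entries satisfy \(|g_{1\bar2}|=o(|z_1|^{\beta-1})=o(\sqrt{g_{1\bar1}g_{2\bar2}})\) as \(z_1\to0\), because \(2\beta-1>\beta-1\) and \(0>\beta-1\). Hence \(\omega>0\) and \(C^{-1}\omega_\beta\le\omega\le C\omega_\beta\) on a neighbourhood of the origin, with the hypothesis \(F>0\) guaranteeing the correct transverse cone model.

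The main obstacle is the H\"older estimate, which I would carry out in the cone coordinate \(\zeta_1=z_1^\beta\), equivalently the coframe \(e^1=\beta|z_1|^{\beta-1}dz_1,\ e^2=dz_2\), in which the distance to \(D\) is comparable to \(|z_1|^\beta\). Expressing \(\omega\) in this coframe gives \(\tilde g_{1\bar1}=F+O(|z_1|)+O(|z_1|^{2-2\beta})\), \(\tilde g_{1\bar2}=O(|z_1|^{\beta})+O(|z_1|^{1-\beta})\), and \(\tilde g_{2\bar2}=\eta_{2\bar2}+O(|z_1|^{2\beta})\); every entry is a smooth function of \((z_1,z_2)\) plus error terms that are positive powers \(|z_1|^{s}=(\mathrm{dist})^{s/\beta}\) with \(s>0\). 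Such a power is H\"older continuous of exponent \(\min(s/\beta,1)\) for the cone metric, so choosing \(\alpha\) smaller than \(1\) and than \((2-2\beta)/\beta\) places all coefficients in the cone H\"older space \(C^{\alpha}_\beta\), with \(\tilde g_{1\bar1}\to F(0,z_2)>0\) along \(D\). This exhibits \(\Phi\in C^{2,\alpha}_\beta\) and identifies the cone angle as \(2\pi\beta\).

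The delicate points, and where I expect to spend the most care, are (i) the bookkeeping of the many error terms, to confirm that a single admissible exponent \(\alpha=\alpha(\beta)\) works for all of them simultaneously; and (ii) the regularity of the mixed term \(g_{1\bar2}\) coming from \(i\p F\wedge\db|z_1|^{2\beta}\) and from the smooth off-diagonal part of \(\eta\), together with the uniformity of every estimate in the tangential variable \(z_2\). Once these are in place, \(\omega=i\dd\Phi\) with \(\Phi\in C^{2,\alpha}_\beta\) and \(\omega\) quasi-isometric to \(\omega_\beta\) is precisely a standard transverse cone singularity of angle \(2\pi\beta\) along \(\{z_1=0\}\).
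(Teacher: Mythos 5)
Your proposal is correct and follows essentially the same route as the paper: expand \(i\dd(F|z_1|^{2\beta})\) by the Leibniz rule and verify that, in the conical coframe \(v_1=|z_1|^{\beta-1}dz_1\), \(v_2=dz_2\), the metric agrees with \(\omega_{\C_{\beta}\times\C}\) up to a H\"older-continuous \((1,1)\)-form vanishing along the line (the paper normalizes \(F(p)\) and \(\eta(\p/\p z_2,\p/\p \bar{z}_2)(p)\) to \(1\) by a linear change of coordinates at each point of the line, which is equivalent to your keeping them as positive H\"older coefficients), while your additional steps --- the \(\dd\)-potential for \(\eta\) and the diagonal-dominance argument for quasi-isometry --- are harmless but not strictly needed. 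One bookkeeping slip: the binding H\"older exponent comes from the mixed term \(|z_1|^{1-\beta}\eta_{1\bar{2}}\), which forces \(\alpha\le(1-\beta)/\beta=1/\beta-1\) (exactly the exponent the paper records), and for \(\beta>1/2\) this is strictly smaller than your stated bound \(\min\{1,(2-2\beta)/\beta\}\); this is harmless here, since the lemma only needs some positive exponent, but your admissible range of \(\alpha\) is too generous as written.
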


\begin{proof}
	We compute
	\[ i \dd (F |z_1|^{2\beta}) = |z_1|^{2\beta} i \dd F + \beta |z_1|^{2\beta-2} \left( \bar{z}_1 idz_1 \bar{\partial}F + z_1 \partial F d\bar{z}_1 \right) + \beta^2 F |z_1|^{2\beta -2} idz_1 d\bar{z}_1 . \]
	Then at a point $p \in \{ z_1 = 0\}$, set $\tilde{z_1} = a z_1$, $\tilde{z_2} = b z_2$ with $a = F(p)^{1/2}$ and $b=\left( \eta(\partial/\partial z_2, \partial/\partial \overline{z}_2) (p) \right)^{1/2}$ to get \( \omega = \omega_{\C_{\beta}\times \C} + \sigma \) with \(\sigma\) a \(C^{\alpha}\) \((1, 1)\)-form, with exponent \(\alpha=(1/\beta)-1\),  vanishing at \(0\). 
\end{proof}

In the above proof we mean that a \((1,1)\)-form is \(C^{\alpha}\) if its components with respect to the co-frame \(\{v_i \wedge \bar{v}_j\}\) where \(v_1=|z_1|^{\beta_1-1}dz_1\) and \(v_2 = dz_2\) are H\"older continuous functions, see \cite{Donaldson}.

\begin{lemma}
	The approximate solution \(\omega\) has standard cone singularities of angle \(2\pi\beta_i\) along \(L_i \setminus \{0\}\) for \(i=1,2,3\).
\end{lemma}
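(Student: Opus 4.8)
The plan is to apply Lemma \ref{localCSinglemma} separately along each line, after choosing coordinates adapted to that line and determining which cutoff region is active near it. The point is that near any $p \in L_i \setminus \{0\}$ sufficiently close to the origin, exactly one of the two terms carrying a cutoff survives, so $\omega$ reduces to an explicit $i\dd$-exact form to which the lemma applies.

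First, for $L_1 = \{z = 0\}$, I would note that at a point $p$ with $z = 0$ and $w \neq 0$ one has $r = |z|^{\beta_1} \to 0$ while $R = |w|^\gamma$ is bounded below, so $s = Rr^{-\alpha_0} \to \infty$ and hence $\chi_1 = 1$, $\chi_2 = 0$ in a neighbourhood of $p$. There $\omega = i\dd(|z|^{2\beta_1} + |w|^{2\gamma})$, and Lemma \ref{localCSinglemma} applies with $z_1 = z$, $z_2 = w$, $\beta = \beta_1$, $F \equiv 1$ and $\eta = i\dd |w|^{2\gamma}$, since $\eta(\p/\p w, \p/\p \bar{w}) = \gamma^2|w|^{2\gamma-2} > 0$ along $L_1 \setminus \{0\}$.

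For $L_2 = \{w = a_2 z\}$ (and symmetrically $L_3$) I would pass to the linear coordinates $z_1 = w - a_2 z = \ell_2$ and $z_2 = z$, so that $\{z_1 = 0\} = L_2$. Near a point $p \in L_2 \setminus \{0\}$ close to the origin we have $\xi = z^{-1}w = a_2$, and because $\kappa < 1$ the inequality $s < 1$ (that is $|w| < |z|^\kappa$) holds once $|z|$ is small; thus $\chi_2 = 1$ and $\omega = i\dd(|z|^{2\beta_1} + |z|^{2\gamma}\phi(\xi))$. Inserting the local expansion $\phi(\xi) = e^{S_1}|\xi - a_2|^{2\beta_2} + S_2$ of Lemma \ref{lem:LOCREGFM} and using $|\xi - a_2| = |z_1|/|z_2|$, the potential becomes
\[
|z_2|^{2\beta_1} + |z_2|^{2\gamma} S_2(\xi) + F\,|z_1|^{2\beta_2}, \qquad F = e^{S_1(\xi)} |z_2|^{2(\gamma - \beta_2)} .
\]
Since $z_2 = z \neq 0$ near $p$ and $S_1$ is smooth near $a_2$, the function $F$ is smooth and strictly positive (even though the exponent $2(\gamma - \beta_2) = 2(\beta_3 - 1)$ is negative), and $\eta := i\dd(|z_2|^{2\beta_1} + |z_2|^{2\gamma}S_2(\xi))$ is a smooth $(1,1)$-form there. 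I would then invoke Lemma \ref{localCSinglemma} with $\beta = \beta_2$.

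The step I expect to be the main obstacle is checking the transverse positivity $\eta(\p/\p z_2, \p/\p \bar{z}_2) > 0$ along $\{z_1 = 0\}$, since $\eta$ is not manifestly positive. The clean way around this is to observe that the only $z_2\bar{z}_2$-contribution of $i\dd(F|z_1|^{2\beta_2})$ is $|z_1|^{2\beta_2}\,\p^2 F/\p z_2\p\bar{z}_2$, which vanishes on $\{z_1 = 0\}$; hence $\eta(\p/\p z_2, \p/\p \bar{z}_2)|_{z_1 = 0} = \omega(\p/\p z_2, \p/\p \bar{z}_2)|_{z_1 = 0}$, and this is positive because a neighbourhood of $L_2 \setminus \{0\}$ near the origin lies in the colliding region $\{R < \delta r\}$ where $\omega > 0$ by Lemma \ref{lem:COLLREG}. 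This reduces the remaining positivity to a fact already established, and the same argument with $(\beta_2, a_2)$ replaced by $(\beta_3, a_3)$ handles $L_3$.
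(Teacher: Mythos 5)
Your strategy coincides with the paper's own proof almost step for step: the $L_1$ case via $\omega=\omega_{\cone}$ near $L_1\setminus\{0\}$, and for $L_2$ the same adapted coordinates $z_1=w-a_2z$, $z_2 \approx z$, the same observation that $\chi_2\equiv 1$ near $L_2\setminus\{0\}$ close to the origin, the same substitution of the expansion of $\phi$ from Lemma \ref{lem:LOCREGFM}, and the same reduction to Lemma \ref{localCSinglemma} with $F=e^{S_1}|z_2|^{2(\gamma-\beta_2)}$ smooth and positive. The single point where you deviate from the paper is the verification that $\eta(\p/\p z_2,\p/\p \bar z_2)>0$ along $\{z_1=0\}$, and there your argument, as written, has a gap.

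You correctly note that the $(2,\bar 2)$-coefficient of $i\dd(F|z_1|^{2\beta_2})$ vanishes on $\{z_1=0\}$, so that $\omega(\p/\p z_2,\p/\p\bar z_2)$ extends continuously over $L_2$ with boundary value $\eta(\p/\p z_2,\p/\p\bar z_2)$. But you then conclude strict positivity of this boundary value from the fact that $\omega>0$ on the punctured neighbourhood. That inference is invalid: pointwise positivity does not survive limits, only non-negativity does (the function $|z_1|^2$ is positive for $z_1\neq 0$ but its continuous extension vanishes on $\{z_1=0\}$), so your argument only yields $\eta(\p/\p z_2,\p/\p\bar z_2)\geq 0$ along $L_2$, which is not enough for Lemma \ref{localCSinglemma}. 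The repair is, however, contained in the very lemma you cite: Lemma \ref{lem:COLLREG} gives not just positivity but the uniform equivalence of $\omega$ with the Hermitian metric $\omega_H$ of \eqref{eq:HERMMET} on $\{R<\delta r\}$, and since in your coordinates $\omega_H(\p/\p z_2,\p/\p\bar z_2)\geq \beta_1^2|z|^{2\beta_1-2}$ is bounded below by a positive constant near $p$ (because $z=z_2\neq 0$ there), the limiting value inherits a strictly positive lower bound. The paper instead avoids any limiting argument by computing directly: restriction to a complex submanifold commutes with $i\dd$, so $\eta|_{L_2}=i\dd\left(\psi|_{L_2}\right)=i\dd\left(|z|^{2\beta_1}+\phi(a_2)|z|^{2\gamma}\right)$, whose coefficient $\beta_1^2|z|^{2\beta_1-2}+\phi(a_2)\gamma^2|z|^{2\gamma-2}$ is positive for small $|z|$ because $\gamma>\beta_1$ dictates that the first term dominates — and this works even if $\phi(a_2)<0$, a scenario your continuity argument alone cannot exclude from producing a degenerate $\eta$.
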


\begin{proof}
	The statement is clear for \(L_1\), indeed \(\omega\) is isometric to \(\cone\) in a neighbourhood of \(L_1 \setminus \{0\}\).
	
	Consider the line \(L_2\). We use equation \eqref{locregFm} for the potential \(\phi\) in a neighbourhood of \(a_2\). Let \(p=(z_0, w_0) \in L_2\) with \(p \neq (0,0)\). In a neighbourhood of \(p\) we can write
	\begin{align*}
	\psi &= |z|^{2\beta_1} + |z|^{2\gamma}\phi(z^{-1}w) \\
	&= |z|^{2\beta_1} + e^{S_1} |z|^{2\gamma-2\beta_2} |w-a_2z|^{2\beta_2} + S_2 |z|^{2\gamma} .
	\end{align*}
	Note that \(|z|\) is a smooth non-vanishing function in a neighbourhood of \(p\). We change coordinates to \(z_1=w-a_2z\) and \(z_2=z-z_0\), so \(L_2 = \{z_1=0\}\). Our potential is
	\[\psi = F |z_1|^{2\beta_2} + \tilde{S}_2 , \]
	where \(F = e^{S_1} |z_2 +z_0|^{2\gamma-2\beta_2}\) and \(\tilde{S}_2 = |z_2 +z_0|^{2\beta_1} + S_2 |z_2 +z_0|^{2\gamma}\) are smooth in a neighbourhood of \(p =(0,0)\) because \(z_0 \neq 0\).
	We apply Lemma \ref{localCSinglemma} with \(\eta = i \dd \tilde{S_2}\). Note that \(\psi|_{L_2} = |z|^{2\beta_1} + \phi(a_2) |z|^{2\gamma}\), so \(\eta|_{L_2} >0 \). We conclude that \(\omega\) has standard cone singularities of angle \(2\pi\beta_2\) along \(L_2 \setminus \{0\}\), and similarly for \(L_3\).
\end{proof}

We move on to identify the tangent cone of the approximate solution at the origin. We introduce first the dilations of the cone \(\cone\).
For \(\lambda > 0\) we let
\begin{align*}
	D_{\lambda}(z,w) &= \lambda \cdot (z,w) \\ 
	&= (\lambda^{1/\beta_1}z, \lambda^{1/\gamma}w) .
\end{align*}
These satisfy the standard properties \(D_{\lambda}^* \omega_{\cone} = \lambda^2 \omega_{\cone}\) and \(\rho \circ D_{\lambda} = \lambda \rho\).
For \(t>0\), let \(B_t = \{ \rho < t \} \) and write \(B=B_1\).  Consider dilations maps \(D_{\lambda} : B \to B_{\lambda}\) for \(0< \lambda \ll 1\). Set
\begin{equation*}
	\omega_{\lambda} = \lambda^{-2} D_{\lambda}^* \omega .
\end{equation*}
Thus, we have a one parameter family of metrics \(\omega_{\lambda}\) on \(B\) with associated distance functions \(d_{\lambda}: B \times B \to \R_{\geq 0} \). We will show that \(d_{\lambda}\) converges uniformly to \(d_{\cone}\) as \(\lambda \to 0\), but first we start with an elementary auxiliary result.

\begin{lemma} \label{lem:FMTGCONE}
	Let \(\mu\) be a complex number with \(0<|\mu|<1\).
	The distance function of \(\omega_{F, \mu} = \gamma^2 |w- \mu a_2|^{2\beta_2-2} |w- \mu a_3|^{2\beta_3-2} idwd\bar{w}\) converges uniformly on compact sets to the distance function of \(\gamma^2 |w|^{2\gamma-2}idwd\bar{w}\) (i.e. \(\C_{\gamma}\)) as \(\mu \to 0\). More precisely, for every compact \(K \subset \C\) there is \(C>0\) such that
	\[|d_{\omega_{F, \mu}}(p,q) - d_{\C_{\gamma}}(p,q)| \leq C |\mu|^{\gamma} \]
	for all \(p, q \in K\).
\end{lemma}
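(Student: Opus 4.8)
The plan is to eliminate $\mu$ by an exact scaling and reduce the claim to a single quantitative comparison between the two fixed flat cones $\omega_F$ and $\omega_{\gamma}:=\gamma^2|w|^{2\gamma-2}i\,dw\,d\bar{w}$. Writing $S_{\mu}(w)=\mu w$ and using $2\gamma=(2\beta_2-2)+(2\beta_3-2)+2$, one checks directly that $S_{\mu}^{*}\omega_{F,\mu}=|\mu|^{2\gamma}\omega_F$ and $S_{\mu}^{*}\omega_{\gamma}=|\mu|^{2\gamma}\omega_{\gamma}$. Hence $S_{\mu}$ is a homothety of ratio $|\mu|^{\gamma}$ for both metrics, so $d_{\omega_{F,\mu}}(p,q)=|\mu|^{\gamma}d_{\omega_F}(p/\mu,q/\mu)$ and likewise for $\omega_{\gamma}$. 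Therefore
\[ |d_{\omega_{F,\mu}}(p,q)-d_{\C_{\gamma}}(p,q)| = |\mu|^{\gamma}\,|d_{\omega_F}(p/\mu,q/\mu)-d_{\omega_{\gamma}}(p/\mu,q/\mu)| , \]
and it suffices to prove the $\mu$-free bound $C_0:=\sup_{p,q\in\C}|d_{\omega_F}(p,q)-d_{\omega_{\gamma}}(p,q)|<\infty$; multiplying by $|\mu|^{\gamma}$ then gives the Lemma, the compactness of $K$ playing no role beyond the finiteness of $C_0$.

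To bound $C_0$ I would compare geodesics, exploiting that $\omega_F$ and $\omega_{\gamma}$ differ only in a fixed compact region. Set $R=\max\{|a_2|,|a_3|\}$, so that all singular points $0,a_2,a_3$ lie in $B_{2R}(0)$, and write the conformal factors as $h_F,h_{\gamma}$. The centre of mass assumption $(1-\beta_2)a_2+(1-\beta_3)a_3=0$ gives, for $|w|\geq 2R$, the sharp expansion $\sqrt{h_F}/\sqrt{h_{\gamma}}=1+O(|w|^{-2})$, hence $|\sqrt{h_F}-\sqrt{h_{\gamma}}|=O(|w|^{\gamma-3})$. To prove $d_{\omega_F}\leq d_{\omega_{\gamma}}+C_0$ I take an $\omega_{\gamma}$-minimizing geodesic between $p,q$, and whenever it enters $B_{2R}(0)$ I reroute it along the shorter arc of $\partial B_{2R}(0)$; since this circle has length $O(R^{\gamma})$ in both metrics, the detour changes the length by a bounded amount and produces a path $\tilde{\sigma}$ contained in $\{|w|\geq 2R\}$. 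On $\tilde{\sigma}$ one estimates $\ell_{\omega_F}(\tilde{\sigma})-\ell_{\omega_{\gamma}}(\tilde{\sigma})\leq\int_{\tilde{\sigma}}|\sqrt{h_F}-\sqrt{h_{\gamma}}|\,|dw|$, and the reverse inequality is proved identically with the roles of the two metrics exchanged, rerouting an $\omega_F$-minimizing geodesic whose only possible bad behaviour is near the two cone points, again all inside $B_{2R}$.

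The crux is to see that $\int_{\tilde{\sigma}}|\sqrt{h_F}-\sqrt{h_{\gamma}}|\,|dw|$ is bounded independently of $p,q$. Parametrising by $\omega_{\gamma}$-arclength $t$ turns this into $\int(\text{ratio}-1)\,dt=\int O(|w(t)|^{-2})\,dt$. Because $\omega_{\gamma}$ is exactly the flat cone of angle $2\pi\gamma$, in a developing coordinate $\zeta=w^{\gamma}$ each straight piece of $\tilde{\sigma}$ is a Euclidean segment, so $|w(t)|^{\gamma}=|\zeta(t)|\gtrsim|t-t_0|$ away from its closest point $t_0$ to the apex; thus $|w(t)|^{-2}\lesssim|t-t_0|^{-2/\gamma}$ with exponent $2/\gamma>1$, giving a convergent tail, while on $|t-t_0|\lesssim 1$ the bound $|w|\geq 2R$ keeps the integrand bounded. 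This yields $C_0<\infty$ and hence the stated estimate $C|\mu|^{\gamma}$. The main obstacle I anticipate is precisely this last step: controlling the integral along an arbitrary minimizing geodesic, in particular geodesics that wind around or pass close to the apex of the cone, where one must combine the linear growth of the distance-to-apex in arclength with the detour bookkeeping around $B_{2R}(0)$. The convergence of the integral, secured by $\gamma<1$ together with the centre of mass cancellation, is what ultimately forces the difference of distance functions to stay bounded.
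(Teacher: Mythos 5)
Your proposal is correct, and its organization genuinely differs from the paper's. Both proofs start from the same scaling observation (the paper writes \(\omega_{F,\mu}=|\mu|^{2\gamma}m_{\mu}^{*}\omega_F\)), but the paper then argues directly on the family: the three singular points lie in a disc of \(\omega\)-diameter \(O(|\mu|^{\gamma})\), and away from that disc the metrics \(\omega_{F,\mu}\) converge uniformly to \(\omega_{\C_{\gamma}}\), whence the distance functions converge; the quantitative rate and the length-comparison along paths are left implicit in a two-line sketch. You instead exploit that the dilation \(S_{\mu}\) is a homothety for \emph{both} metrics, which eliminates \(\mu\) entirely and reduces the lemma to the single statement \(C_0:=\sup_{p,q\in\C}|d_{\omega_F}(p,q)-d_{\C_{\gamma}}(p,q)|<\infty\), i.e.\ that the asymptotically conical metric \(\omega_F\) stays at \emph{bounded} distance from its cone at infinity; you then prove this by rerouting geodesics around \(\partial B_{2R}(0)\) and integrating the relative error \(O(|w|^{-2})\) along geodesic pieces, using \(|w(t)|^{\gamma}\gtrsim|t-t_0|\) and \(2/\gamma>1\) for convergence of the tail. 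Your route buys a sharper conclusion: the constant is uniform over all of \(\C\) (compactness of \(K\) is indeed superfluous), and the rate \(|\mu|^{\gamma}\) is transparent; the paper's formulation, by contrast, is shorter and slots directly into the Gromov--Hausdorff framing reused in Lemma \ref{lem:TGCONE}.

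Two sketched details deserve care, but both are completable. For the reverse inequality, the developing coordinate of \(\omega_F\) is the Schwarz--Christoffel coordinate, not \(w^{\gamma}\); the cleanest fix avoids developing altogether: along a minimizing piece outside \(B_{2R}\), with \(t_0\) the parameter of closest approach to the origin, the triangle inequality gives \(|t-t_0|=d_F(w(t),w(t_0))\leq d_F(w(t),0)+d_F(0,w(t_0))\leq C(|w(t)|^{\gamma}+1)\), which yields the same integrable lower bound. Second, an \(\omega_F\)-minimizing geodesic could a priori enter \(B_{2R}\) several times, but the whole portion between its first entry and last exit has length at most \(\mathrm{diam}_F(\overline{B_{2R}})\), so a single bounded detour suffices. (One further remark: since \(1/\gamma>1\), the integral converges even with the weaker decay \(O(|w|^{-1})\), so the centre-of-mass normalization sharpens the estimate but is not strictly needed for finiteness of \(C_0\).)
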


\begin{proof}
	Introduce the multiplication map \(m_{\mu}(w) = \mu^{-1}w\), so
	\[\omega_{F, \mu} = |\mu|^{2\gamma} m_{\mu^{-1}}^* \omega_F . \]
	Up to a constant factor, the distance between \(\mu a_2\) and \(\mu a_3\) with respect to \(\omega_{F, \mu}\) is \(|\mu|^{\gamma}\). The three points \(0, \mu a_2, \mu a_3\) are contained in a small disc \(D_\mu\) around the origin, with diameter (with respect to either \(\omega_{F, \mu}\) or \(\C_{\gamma}\)) bounded by a constant times \(|\mu|^{\gamma}\). On the complement of \(D_{\mu}\), the forms \(\omega_{F, \mu}\) converge smoothly and uniformly to \(\omega_{\C_{\gamma}}\) as \(\mu \to 0\); hence the lemma follows.
\end{proof}

\begin{lemma} \label{lem:TGCONE}
	The tangent cone of \(\omega\) at the origin is \(\cone\). More precisely; the distance functions \(d_{\lambda}\) converge uniformly over \(B\) to \(d_{\cone}\)	as \(\lambda \to 0\). That is, for any \(\epsilon>0\) there is \(\lambda_0>0\) such that
	\begin{equation*}
	|d_{\lambda}(x_1, x_2) - d_{\cone}(x_1, x_2)| < \epsilon
	\end{equation*}
	for any \(x_1, x_2 \in B\) and \(0< \lambda < \lambda_0\).
\end{lemma}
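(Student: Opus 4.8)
The plan is to exploit that $\omega_\lambda = \lambda^{-2}D_\lambda^*\omega$ coincides \emph{exactly} with $\omega_{\cone}$ outside a shrinking tube around the $\gamma$-line $\{w=0\}$, and to control distances inside that tube by the one-dimensional collision of Lemma \ref{lem:FMTGCONE}. First I would pin down the region of equality. By construction $\omega$ differs from $\omega_{\cone}$ only where $s=Rr^{-\alpha_0}\le 2$, that is on $\{R\le 2r^{\alpha_0}\}$. Since $r\circ D_\lambda=\lambda r$ and $R\circ D_\lambda=\lambda R$, pulling back and using $r\le\rho<1$ on $B$ shows that the non-product set of $\omega_\lambda$ is contained in $\{R\le 2\lambda^{\alpha_0-1}r^{\alpha_0}\}\subset\{R\le 2\lambda^{\alpha_0-1}\}$; hence $\omega_\lambda=\omega_{\cone}$ on $\{R\ge\epsilon\}$ once $2\lambda^{\alpha_0-1}<\epsilon$ (here $\alpha_0>1$ is essential). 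Thus for $\lambda$ small the two metrics agree away from the neighbourhood $\{R<\epsilon\}$ of the $z$-axis.

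Next I would describe $\omega_\lambda$ inside the tube. Pulling back the warped-product approximation of Lemma \ref{lem:COLLREG} and rescaling, a direct computation gives $\omega_\lambda=\omega_{H,\lambda}\,(1+O(\lambda^{\alpha_0-1}))$ on the colliding part, where
\[\omega_{H,\lambda} = \beta_1^2|z|^{2\beta_1-2}idzd\bar z + \gamma^2|w-\mu_0 a_2 z|^{2\beta_2-2}|w-\mu_0 a_3 z|^{2\beta_3-2}idwd\bar w, \quad \mu_0 = \lambda^{1/\beta_1-1/\gamma}\to 0 ,\]
while on the gluing part Lemma \ref{lem:gluingreg} rescales to $\omega_\lambda=\omega_{\cone}+E_\lambda$ with $|E_\lambda|_{\cone}=O(\lambda^c)$. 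On each slice $\{z=z_0\}$ the $w$-part of $\omega_{H,\lambda}$ is exactly $\omega_{F,\mu}$ with $\mu=\mu_0 z_0\to 0$, so Lemma \ref{lem:FMTGCONE} gives uniform convergence of the slice distance functions (at rate $O(|\mu|^\gamma)$) to those of $\C_\gamma$. In particular the $\omega_\lambda$- and $\omega_{\cone}$-diameters of each slice disc $\{z=z_0,\ R\le\epsilon\}$ are both $\le C\epsilon$ for all small $\lambda$.

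With these ingredients I would prove $|d_\lambda-d_{\cone}|\le C\epsilon+o(1)$ by a symmetric projection argument. Given $x_1,x_2\in B$, project radially in $w$ to points $\bar x_i$ on $\{R=\epsilon\}$ whenever $R(x_i)<\epsilon$; the slice-diameter bound makes $d_\lambda(x_i,\bar x_i)$ and $d_{\cone}(x_i,\bar x_i)$ both $\le C\epsilon$. Because $\C_\gamma$ has cone angle $2\pi\gamma<2\pi$, product geodesics project to $\C_\gamma$-geodesics, and the $\cone$-geodesic between $\bar x_1,\bar x_2\in\{R\ge\epsilon\}$ stays in $\{R\ge c_0(\gamma)\epsilon\}$; this lands in the equality region once $2\lambda^{\alpha_0-1}<c_0(\gamma)\epsilon$, so $d_\lambda(\bar x_1,\bar x_2)\le d_{\cone}(\bar x_1,\bar x_2)$ and the triangle inequality yields $d_\lambda\le d_{\cone}+C\epsilon$. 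For the reverse inequality I would take an $\omega_\lambda$-geodesic and bound the $\cone$-length of its excursions into $\{R<\epsilon\}$ using the slice distance convergence of Lemma \ref{lem:FMTGCONE}, which ensures that crossing the collapsing double cone is not a genuine shortcut relative to $\C_\gamma$; letting $\epsilon\to0$ and $\lambda\to0$ accordingly gives $d_\lambda\to d_{\cone}$ uniformly.

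The main obstacle is exactly this last assembly. The convergence $\omega_\lambda\to\omega_{\cone}$ is \emph{not} uniform near the collapsing lines $\{w=\mu_0 a_j z\}$, where $\omega_\lambda$ blows up, and near $\{w=0\}$ the metric $\omega_\lambda$ is instead \emph{smaller} than $\omega_{\cone}$; hence the metrics cannot be compared pointwise and one must propagate the genuine distance estimate of Lemma \ref{lem:FMTGCONE} from the fibres to the total space. Controlling the $\cone$-length of $\omega_\lambda$-geodesic excursions through the tube, and verifying that the warping and off-diagonal terms of size $O(\lambda^{\alpha_0-1})$ and $O(\lambda^c)$ do not accumulate, is where the real work lies.
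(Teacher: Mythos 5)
Your first two paragraphs assemble the correct ingredients, and they overlap substantially with the paper's proof: the exact-equality region for \(\omega_\lambda\), the rescaled Hermitian model \(\omega_{H,\lambda}\) with collision parameter \(\mu_0=\lambda^{1/\beta_1-1/\gamma}\), and Lemma \ref{lem:FMTGCONE} applied on slices. But the proposal is not a complete proof, and the gap is precisely the one you flag yourself at the end: the reverse inequality \(d_{\cone}\leq d_\lambda + C\epsilon\) is never established, only described as ``where the real work lies.'' This step is not a routine verification. An \(\omega_\lambda\)-geodesic between points of \(\{R=\epsilon\}\) can make arbitrarily many excursions into the tube \(\{R<\epsilon\}\), and your projection/replacement scheme incurs an error of order \(\epsilon\) \emph{per excursion}, which is not summable; inside the tube \(\omega_\lambda\) is not bounded below by \(\omega_{\cone}\) (as you note, near \(\{w=0\}\) it is strictly smaller in the \(w\)-directions), so no length-level comparison is available as a fallback; and Lemma \ref{lem:FMTGCONE} is a statement about a fixed slice metric \(\omega_{F,\mu}\), whereas along an excursion the slice parameter \(\mu=\mu_0 z\) varies with the path, so the fiberwise distance estimate cannot be applied to the excursion directly. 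As written, your argument yields (modulo the warping and off-diagonal errors) only one of the two inequalities.

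The paper avoids this entire difficulty with one device that is missing from your outline: the intermediary singular Hermitian metric \(\omega_H\) of equation \eqref{eq:HERMMET}, which has the \emph{same} conical singularities as \(\omega\) along the actual lines \(L_1, L_2, L_3\). Because the singularities match, the comparison \(|\omega-\omega_H|_{\omega}=O(\rho^{2c})\) (obtained from Lemmas \ref{lem:COLLREG} and \ref{lem:gluingreg}) is a genuine pointwise bound on all of \(B\), including near the lines, and therefore converts immediately into the uniform distance estimate \(\|d_\lambda - d_{H,\lambda}\|_{L^\infty(B\times B)}=O(\lambda^c)\) with no geodesic analysis at all. The only comparison that must then be made at the level of distances rather than of metric tensors is between the two explicit diagonal metrics \(\omega_{H,\lambda}\) and \(\omega_{\cone}\), which share the identical \(z\)-part \(\beta_1^2|z|^{2\beta_1-2}\,idzd\bar z\); there Lemma \ref{lem:FMTGCONE} with \(\mu=\lambda^{1/\beta_1-1/\gamma}z\) gives \(\|d_{H,\lambda}-d_{\cone}\|_{L^\infty(B\times B)}=O(\lambda^{\gamma/\beta_1-1})\), and the triangle inequality finishes. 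In short: rather than comparing the non-explicit metric \(\omega_\lambda\) to \(\omega_{\cone}\) directly, transfer the collapsing-lines problem onto a pair of explicit model metrics with a common factor. If you want to salvage your outline, prove your tube/slice estimate for \(\omega_{H,\lambda}\) instead of \(\omega_\lambda\), and insert the pointwise comparison between \(\omega\) and \(\omega_H\) as a first step.
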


\begin{proof}
Consider first the singular Hermitian metric
\begin{equation*}
	\omega_H = \beta_1^2 |z|^{2\beta_1-2} i dz d\bar{z} + \gamma^2 |w-a_2z|^{2\beta_2-2} |w-a_3z|^{2\beta_3-2} idwd\bar{w} .
\end{equation*}
	It is easy to see that, on the region \(|w|>(1/3) |z|^{\kappa}\), we have 
	\[
	    |\omega_H - \omega_{\cone}|_{\cone} = O(\rho^{c'})
	\]
	for some \(c'>0\). It follows from Lemma \ref{lem:COLLREG} that \(|\omega-\omega_H|_{\omega} = O(\rho^{\alpha_0-1})\) on the region \(|w|<|z|^{\kappa}\) (equivalently \(\{R<r^{\alpha_0}\}\)). By Lemma \ref{lem:gluingreg} \(\omega\) is also polynomially asymptotic to \(\omega_{\cone}\) on \(|w|>(1/3)|z|^{\kappa}\), we conclude that there is some \(c>0\) such that
	\begin{equation} \label{eq:COMPHERM}
		|\omega - \omega_H|_{\omega} = O(\rho^{2c})
	\end{equation}
	on all \(B\). We let \(\omega_{H, \lambda} = \lambda^{-2} D_{\lambda}^* \omega_H\) and write \(d_{H, \lambda}\) for the associated distance. It follows from equation \eqref{eq:COMPHERM} that
	\begin{equation} \label{eq:COMPHERM2}
		\| d_{\lambda} - d_{H, \lambda} \|_{L^{\infty}(B \times B)} = O (\lambda^c)
	\end{equation}
	as \(\lambda \to 0\).
	
	On the other hand, we can easily compute
	\begin{equation*}
		\omega_{H, \lambda} = \beta_1^2 |z|^{2\beta_1-2} i dz d\bar{z} + \gamma^2 |w- \lambda^{1/\beta_1-1/\gamma} a_2z|^{2\beta_2-2} |w-\lambda^{1/\beta_1 - 1/\gamma}a_3z|^{2\beta_3-2} idwd\bar{w} .
	\end{equation*}
	We use Lemma \ref{lem:FMTGCONE} with \(\mu = \lambda^{1/\beta_1 -1/\gamma}z \) to get
	\begin{equation} \label{eq:TGCONE}
		\| d_{H, \lambda} - d_{\cone} \|_{L^{\infty}(B \times B)} = O (\lambda^{\gamma/\beta_1 -1}) .
	\end{equation}
	Equations \eqref{eq:COMPHERM2} and \eqref{eq:TGCONE} together, imply that
	\begin{equation*}
		\| d_{\lambda} - d_{\cone} \|_{L^{\infty}(B \times B)} \to 0 \hspace{2mm} \mbox{as } \lambda \to 0 . \qedhere
	\end{equation*}
\end{proof}

\subsection{Ricci potential}
	
Let \(\Omega\) be the multi-valued holomorphic volume form  locally given by
	\begin{equation*}
	\Omega = (\beta_1 \gamma) \left(\prod_{j=1}^{3} \ell_j^{\beta_j-1}\right) dz dw .
	\end{equation*}
The Ricci potential \(h\) of the approximate solution is defined by  
\begin{equation*}
\omega^2 = e^{-h} \Omega \wedge \bar{\Omega} .
\end{equation*}
Recall that in the approximate solution metric ansatz we fixed some  \(1<\alpha_0<\gamma/\beta_1\).

\begin{proposition}\label{propriccipot}
	There is \(\delta > 2 \gamma / \beta_1\) such that in a neighbourhood of $0$
	\begin{equation*}
	|h| \leq \begin{cases}
	C \rho^{\delta -2} &\text{if } R>\mu \rho \\
	C \rho^{2 \gamma/\beta_1 - 2 \alpha_0}(-\log \rho) &\text{if } R \in ((\mu^{-1}/2) \rho^{\gamma/\beta_1}, 2\mu \rho) \\
	C \rho^{2\gamma/\beta_1 -2} &\text{if } R < \mu^{-1} \rho^{\gamma/\beta_1}
	\end{cases}
	\end{equation*}
	for suitable \(0 < \mu < 1\) and \(C>0\). In particular, \(|h| \leq C \rho^{\epsilon} \) for any \(0 < \epsilon < 2 \gamma/\beta_1 -2\alpha_0 \).
\end{proposition}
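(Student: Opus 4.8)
The plan is to compute \(e^{-h}\) as the ratio of \(\det\omega\) to the volume factor \((\beta_1\gamma)^2|z|^{2\beta_1-2}|w-a_2z|^{2\beta_2-2}|w-a_3z|^{2\beta_3-2}\) region by region, exploiting that \(\omega\) has an explicit potential in two of the three. First I would record what the regions mean geometrically: for small \(\rho\), the condition \(R>\mu\rho\) forces \(R>2r^{\alpha_0}\), so the first region lies inside the cone locus \(\{\omega=\omega_{\cone}\}\); the condition \(R<\mu^{-1}\rho^{\gamma/\beta_1}\) forces \(R<r^{\alpha_0}\) with \(\xi:=w/z\) bounded, so the third region lies inside the collision locus \(\{\omega=i\dd(|z|^{2\beta_1}+|z|^{2\gamma}\phi(\xi))\}\); and the middle region \(\rho^{\gamma/\beta_1}\lesssim R\lesssim\rho\) straddles the gluing annulus \(R\sim r^{\alpha_0}\), which sits strictly inside it because \(\gamma/\beta_1>\alpha_0>1\).

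On the first region \(\omega=\omega_{\cone}\), and a direct computation gives \(e^{-h}=|1-a_2z/w|^{2-2\beta_2}|1-a_3z/w|^{2-2\beta_3}\). Since \(|z/w|\lesssim\rho^{1/\beta_1-1/\gamma}\to0\), expanding the logarithm and using the centre-of-mass condition \((1-\beta_2)a_2+(1-\beta_3)a_3=0\) to cancel the linear term — exactly as in Proposition \ref{propmetriconC} — leaves \(|h|=O(|z/w|^2)=O(\rho^{2/\beta_1-2/\gamma})\). Thus the first bound holds with \(\delta=2+2/\beta_1-2/\gamma\), and the required inequality \(\delta>2\gamma/\beta_1\) reduces to \(\beta_1<\gamma\).

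On the collision locus I would pass to \((z,\xi)\) coordinates, where the complex Hessian of \(|z|^{2\beta_1}+|z|^{2\gamma}\phi(\xi)\) is explicit; computing its determinant and transforming back through \(dz\wedge d\xi=z^{-1}dz\wedge dw\) (a factor \(|z|^{-2}\)), the leading term reproduces the volume factor exactly, using \(\phi_{\xi\bar\xi}=\gamma^2|\xi-a_2|^{2\beta_2-2}|\xi-a_3|^{2\beta_3-2}\) and \(\gamma=\beta_2+\beta_3-1\). This yields \(e^{-h}=1+\beta_1^{-2}|z|^{2\gamma-2\beta_1}\big(\phi\phi_{\xi\bar\xi}-|\phi_\xi|^2\big)/Q\) with \(Q=|\xi-a_2|^{2\beta_2-2}|\xi-a_3|^{2\beta_3-2}\). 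For bounded \(\xi\) (the third region) the quotient is \(O(1)\) — near the cone points this uses the local form \eqref{locregFm} to see \(|\phi_\xi|^2/Q\) stays bounded — giving \(|h|=O(\rho^{2\gamma/\beta_1-2})\). In the collision part of the middle region \(\xi\) is large, and the decisive observation is that \(\phi\phi_{\xi\bar\xi}-|\phi_\xi|^2\equiv0\) for the model \(\phi=|\xi|^{2\gamma}\); feeding in the expansion \(\phi=|\xi|^{2\gamma}+A\log|\xi|+\phi_0\) and the harmonicity of \(\log|\xi|\), only an \(O(|\xi|^{2\gamma-2}\log|\xi|)\) term survives, so after dividing by \(Q\sim|\xi|^{2\gamma-2}\) one gets \(|h|=O(\rho^{2\gamma/\beta_1-2}(-\log\rho))\), well inside the claimed bound.

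It remains to treat the middle region's binding estimate near the annulus. On its cone part \(\omega=\omega_{\cone}\) again gives \(|h|=O(\rho^{2/\beta_1-2\alpha_0/\gamma})\), which is stronger than needed. On the annulus \(R\sim r^{\alpha_0}\) I would write \(\omega=\omega_{\cone}+i\dd(f_1f_2)\) with \(f_1=\chi_2|z|^{2\gamma}\) and \(f_2=A\log|z^{-1}w|+\phi_0\) as in Lemma \ref{lem:gluingreg}, and compare the \(\rho\)-orders of the four terms of \(i\dd(f_1f_2)\): they are \(2\gamma/\beta_1-2\alpha_0\), \(2\gamma/\beta_1-\alpha_0-1\) and \(2\gamma/\beta_1-2\), and since \(\alpha_0>1\) the smallest is the first, coming from \(f_2\,i\dd f_1\) where the cutoff's second derivatives hit \(|z|^{2\gamma}\) and are multiplied by \(f_2=O(-\log\rho)\). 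This gives \(|i\dd(f_1f_2)|_{\cone}=O(\rho^{2\gamma/\beta_1-2\alpha_0}(-\log\rho))\) and hence \(|h|\le|h_{\cone}|+|\log(\det\omega/\det\omega_{\cone})|=O(\rho^{2\gamma/\beta_1-2\alpha_0}(-\log\rho))\). Collecting the three regions, the smallest exponent is \(2\gamma/\beta_1-2\alpha_0\), carrying a logarithm that is absorbed to conclude \(|h|\le C\rho^\epsilon\) for every \(0<\epsilon<2\gamma/\beta_1-2\alpha_0\). The hard part is the collision region: naively \(\det\omega/\det\omega_H-1\) is of size \((R/r)^2\), far too large near the annulus, and only the exact cancellation \(\phi\phi_{\xi\bar\xi}-|\phi_\xi|^2=0\) for \(|\xi|^{2\gamma}\), together with the harmonicity of \(\log|\xi|\), reduces it to the stated order; pinning down the dominant cutoff term and tracking the logarithm to obtain the sharp exponent is the remaining delicate point.
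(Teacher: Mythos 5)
Your proposal is correct, and its overall architecture is the same as the paper's: compute \(e^{-h}\) region by region as the ratio of \(\det(\omega)\) to \(\beta_1^2\gamma^2\prod_j|\ell_j|^{2\beta_j-2}\), using the centre-of-mass cancellation where \(\omega=\omega_{\cone}\) (paper's regions I--II), the cutoff estimates of Lemma \ref{lem:gluingreg} on the gluing annulus (region III), and the explicit Hessian in \((z,\xi)\) coordinates on the collision locus (regions IV--V); your three regions with sub-cases reproduce exactly the paper's five, and your exponent bookkeeping (including \(\delta = 2+2/\beta_1-2/\gamma > 2\gamma/\beta_1\) iff \(\beta_1<\gamma\), and the three annulus orders \(2\gamma/\beta_1-2\alpha_0\), \(2\gamma/\beta_1-\alpha_0-1\), \(2\gamma/\beta_1-2\)) agrees with theirs. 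The one genuine difference is the collision part of the middle region (the paper's region IV, \(\mu^{-1}r^{\gamma/\beta_1}<R<r^{\alpha_0}\)): the paper writes \(\omega=\omega_{\cone}+E\) there and asserts \(|E|_{\cone}=O(\rho^{2\gamma/\beta_1-2}(-\log\rho))\), while you extend the region-V determinant identity \(e^{-h}=1+\beta_1^{-2}|z|^{2\gamma-2\beta_1}\bigl(\phi\,\phi_{\xi\bar\xi}-|\phi_\xi|^2\bigr)/Q\) to all of \(\{R<r^{\alpha_0}\}\) and invoke the exact vanishing of \(\phi\,\phi_{\xi\bar\xi}-|\phi_\xi|^2\) for the model \(|\xi|^{2\gamma}\). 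Your route is in fact the more careful one at this point: the \((2,\bar 2)\)-component of the paper's \(E\) measured against \(\omega_{\cone}\) equals \((\phi_0)_{\xi\bar\xi}/(\gamma^2|\xi|^{2\gamma-2})=O(|\xi|^{-2})\), which near the inner boundary \(R\sim\mu^{-1}r^{\gamma/\beta_1}\) (where \(|\xi|\) is merely bounded below by a constant) does not decay in \(\rho\) at all, and neither does \(h_{\cone}=O(|\xi|^{-2})\); only their combination decays, precisely because \((\omega_{\cone}+E)_{2\bar 2}\) reproduces the volume factor exactly. Your unified formula builds this cancellation in automatically, so it yields the stated bound where the paper's region-IV display, read literally, is lossy; the price is that you must track the large-\(\xi\) expansion \(\phi=|\xi|^{2\gamma}+A\log|\xi|+\phi_0\) term by term, which you do correctly (the surviving term \(O(|\xi|^{2\gamma-2}\log|\xi|)\), divided by \(Q\sim|\xi|^{2\gamma-2}\) and multiplied by \(|z|^{2\gamma-2\beta_1}\), gives \(O(\rho^{2\gamma/\beta_1-2}(-\log\rho))\), inside the claimed middle-region bound since \(\alpha_0>1\)).
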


\begin{proof}
	We divide into five regions as follows.
	\newline
	
	\noindent
	\(\mathbf{I} = \{R > \mu r\}\). Here \(\rho\) is uniformly equivalent to \(R\). We have \(\chi_1 \equiv 1\) and \(\omega = \omega_{\cone}\).
	\begin{align*}
	e^{-h} &= |w|^{2\gamma-2} \left(|z-a_2w|^{2\beta_2-2} |z-a_3w|^{2\beta_3-2}\right)^{-1} \\
	&= |a_2 - z/w|^{2-2\beta_2} |a_3 - z/w|^{2-2\beta_3} \\
	&= 1 + O(|z/w|^2) .
	\end{align*}
	Note that have freedom, by performing a linear change of coordinates, to multiply all \(a_j\) by a non-zero constant  and keeping the weighted centre of mass at zero. Above we have used this freedom to set  \(1 = |a_2|^{2-2\beta_2}|a_3|^{2-2\beta_3}\). On this region we have \(|z| < C |w|^{\gamma/\beta_1}\), which implies \(|z/w| < C R^{1/\beta_1 -1/\gamma}\). We take
	\[ 2 \frac{\gamma}{\beta_1} < \delta < 2 + \frac{2}{\beta_1} - \frac{2}{\gamma} . \]
	\newline
	
	\noindent
	\(\mathbf{II} = \{2 r^{\alpha_0} < R < 2 \mu r \}\). In this region \(\rho\) is uniformly equivalent to \(r\). We have \(\chi_1 \equiv 1\) and \(\omega = \omega_{\cone}\). Same as before,
	\begin{equation*}
	e^{-h} = 1 + O(|z/w|^2) .
	\end{equation*}
	We still have \(|z/w| < C |w|^{1/\kappa -1} \ll 1\) and
	\begin{align*}
	|z||w|^{-1} &= (r^{1/\beta_1} R^{-1/\gamma} R) R^{-1} \\
	& \leq r^{1/\beta_1 - \alpha_0 (1/\gamma-1)}R^{-1} .
	\end{align*}
	We take
	\begin{equation*}
	\frac{\gamma}{\beta_1} < \frac{\delta}{2} < \frac{1}{\beta_1} - \left(\frac{1}{\gamma} -1\right) \alpha_0 .
	\end{equation*}
	\newline
	
	\noindent
	\(\mathbf{III} = \{r^{\alpha_0}/2 < R < 4 r^{\alpha_0} \}\).  In this region we still have \(|z^{-1}w| \gg 1\) and \(\rho\) is uniformly equivalent to \(r\), so
	\begin{equation*}
	\rho \sim r = |z|^{\beta_1} \sim |w|^{\beta_1/\kappa} = R^{1/\alpha_0} .
	\end{equation*}
	
	We use the expansion \eqref{fmpotential} to write the potential of \(\omega = i \dd u\) in the following form
	\begin{equation*}
	u = |z|^{2\beta_1} + |w|^{2\gamma} + \chi_2 |z|^{2\gamma} \left(A \log |z^{-1}w| + \phi_0(z^{-1}w) \right) .
	\end{equation*}
	We have
	\begin{equation*}
	\omega = \omega_{\cone} + E .
	\end{equation*}
	We want to estimate \(|E|_{\cone}\). We first consider
	
	\begin{equation*}
	i \dd (\chi_2 |z|^{2\gamma}) = \chi_2  i \dd (|z|^{2\gamma}) + |z|^{2\gamma} i \dd  \chi_2   + 2 \langle \p \chi_2, \p (|z|^{2\gamma} ) \rangle .
	\end{equation*}
	Note that
	\begin{equation*}
	| \p \chi_2 |_{\cone} = O(\rho^{-\alpha_0}), \hspace{2mm} 	| \dd \chi_2 |_{\cone} = O(\rho^{-2\alpha_0}) .
	\end{equation*}
	We get that
	\begin{align*}
	|E|_{\cone} &= O (\rho^{2\gamma /\beta_1 - 2\alpha_0} (-\log \rho)) \\
	&= O (\rho^{2\gamma /\beta_1} R^{-2} (-\log \rho)) .
	\end{align*}
	\newline
	
	\noindent
	\(\mathbf{IV} = \{\mu^{-1}r^{\gamma/\beta_1} < R < r^{\alpha_0}\}\). Here \(\rho \sim r\), \(\chi_2 \equiv 1\) and \(|z^{-1}w|> \mu^{-1} \gg 1\).  We use Equation \eqref{fmpotential} to write
	\begin{align*}
	\omega &= i \dd \left( |z|^{2\beta_1} + |w|^{2\gamma} + A |z|^{2\gamma} \log |z^{-1}w| + |z|^{2\gamma} \phi_0 (z^{-1}w) \right) \\
	&= \omega_{\cone} + E .
	\end{align*}
	We estimate the error term as follows
	\begin{align*}
	|E|_{\cone} &= O(|z|^{2\gamma -2 \beta_1} \log |z|^{-1}) \\
	&= O(\rho^{2\gamma/\beta_1-2} (-\log \rho)) \\
	&\leq C \rho^{2\gamma/\beta_1} R^{-2/\alpha_0} (-\log \rho) \\
	&= C \rho^{2\gamma/\beta_1} R^{2-2/\alpha_0} R^{-2} (-\log \rho) \\
	&\leq C \rho^{2\gamma/\beta_1 + \epsilon} R^{-2} (-\log \rho)
	\end{align*}
	with \(\epsilon = (\gamma/\beta_1)(2-2/\alpha_0) >0\). We take \(2\gamma / \beta_1 < \delta < 2\gamma/\beta_1 + \epsilon\).
	\newline
	
	\noindent
	\(\mathbf{V} =\{R < 2\mu^{-1}r^{\gamma/\beta_1} \}\). Here \(\rho \sim r\), \(\chi_2 \equiv 1\) and \(|z^{-1}w| \leq 2 \mu^{-1}\). Let \(\xi=z^{-1}w\) and compute the coefficients of
	\begin{equation*}
	\omega = i \dd \left( |z|^{2\beta_1} + |z|^{2\gamma} \phi (z^{-1}w) \right) .
	\end{equation*}
	as follows
	\begin{align*}
	g_{1\bar{1}} &= \beta_1^2 |z|^{2\beta_1-2} + \frac{\p^2}{\p z \p \bar{z}} \left(|z|^{2\gamma} \phi \right) \\
	&= \beta_1^2 |z|^{2\beta_1 -2} + |z|^{2\gamma-2} E_1 \\
	g_{1\bar{2}} &= |z|^{2\gamma-2} \frac{\p \phi}{\p \bar{\xi}} - |z|^{2\gamma-2} \xi \frac{\p^2 \phi}{\p \xi \p \bar{\xi}} \\
	&= |z|^{2\gamma-2} E_2 \\
	g_{2\bar{2}} &= \gamma^2 |w - a_2 z|^{2\beta_2 -2} |w - a_3 z|^{2\beta_3 -2} ,
	\end{align*}
	with error terms
	\begin{align*}
	E_1 &= \gamma^2 \phi - \gamma \xi \frac{\p \phi}{\p \xi} - \gamma  \bar{\xi} \frac{\p \phi}{\p \bar{\xi}} + |\xi|^2 \frac{\p^2 \phi}{\p \xi \p \bar{\xi}} , \\
	E_2 &= \gamma \frac{\p \phi}{\p \bar{\xi}} -  \xi \frac{\p^2 \phi}{\p \xi \p \bar{\xi}} .
	\end{align*}
	The volume form is
	\begin{equation*}
	\det (\omega) = \beta_1^2 \gamma^2 |\ell_1|^{2\beta_1 -2} |\ell_2|^{2\beta_2 -2} |\ell_3|^{2\beta_3 -2}  \left( 1 + E \right) 
	\end{equation*}
	with error 
	\begin{align*}
	E &= |z|^{2\gamma-2\beta_1} \left(E_1 - |z|^{2\gamma-2}|\ell_2|^{2-2\beta_2} |\ell_3|^{2-2\beta_3} |E_2|^2 \right) \\
	&= |z|^{2\gamma-2\beta_1} \left(E_1 - \left(\frac{\p^2 \phi}{\p \xi \p \bar{\xi}}\right)^{-1} |E_2|^2 \right) \\
	&= \gamma^2|z|^{2\gamma - 2\beta_1} \left( \phi - \left| \frac{\p \phi}{\p \xi} \right|^2 \left(\frac{\p^2 \phi}{\p \xi \p \bar{\xi}}\right)^{-1} \right) \\ &= O(\rho^{2\gamma / \beta_1 -2}) . \qedhere
	\end{align*} 
\end{proof}

In what follows we will only need that the Ricci potential is \(C^{\alpha}\). However, we have decided to include the more refined estimate in Proposition \ref{propriccipot} and the analysis into five regions drawing the analogy with \cite{sz}.

\subsection{Comparison with model cones}
In this section we compare geodesic balls in the conical line space $(\C^2,\omega)$ with balls centred at the apex of suitable `model cones.' There is a finite set \(\mathcal{C}\) consisting of six model cones:
\[\C^2, \hspace{1mm} \cone , \]
\[\C_{\beta_1} \times \C, \hspace{1mm} \C_{\beta_2} \times \C, \hspace{1mm} \C_{\beta_3} \times \C, \text{ and }\hspace{1mm}\C_{\gamma} \times \C; \]
such that for most scales balls in $(\C^2,\omega)$ look like balls centred at the apex of one of the model cones up to a fixed error.

To begin with, we consider two preliminary examples: First, the product of two cones \(\cone\), and second, the space with cones along two parallel lines \(\C \times \C_{F}\). Here, \(\C_F\) denotes the complex numbers endowed with the flat metric \(\omega_F\) with two cone points defined in Section \ref{sect:FMETRIC}.

Fix \(0<\lambda<1\). We consider geodesic balls $B(p,\lambda^k)$ in either space, and rescale the balls to unit size, \(\lambda^{-k}B(p, \lambda^{k})\). Here, we choose \(k \in \mathbf{Z}\), so that \(k \ll 0\) corresponds to large scales and as \(k\) increases we view smaller and smaller scales. Fix some \(\epsilon>0\) that quantifies the deviation from the model cones.

\begin{itemize}
	\item \(\cone\). Take \(0<\mu<1\) and divide into three regions: \(\mR_{\beta_1} = \{R>\mu^{-1}r\}\), \(\mR_{\beta_1, \gamma} = \{(\mu/2) r < R < (2/\mu) r\}\) and \(\mR_{\gamma} = \{R < \mu r\}\).
	After fixing a point $p \in \cone$, we write $R=R(p)$, $r=r(p)$, and $\rho = \rho(p)$.
	
	Now, at large scales, in the sense that
	\[\lambda^{-k} \rho < \epsilon,\]
	the balls \(\lambda^{-k}B(p, \lambda^{k})\) are isometric to a unit ball \(B(\tilde{p}, 1) \subset \cone\) with \(d(\tilde{p}, o) < \epsilon\), where \(o\) denotes the vertex (or origin) of \(\cone\). On the other hand, if
	\(\min \{r, R\} >0\) (so the point \(p\) lies on the complement of the two singular lines) and
	 \(k \gg 0\) is sufficiently large so that 
	\[\lambda^{k} < C \min \{r, R\}\] 
	with \(0<C=C(\beta_1, \gamma) \leq 1\),
	then  \(\lambda^{-k}B(p, \lambda^{k})\) is isometric to a Euclidean unit ball \(B_1 \subset \C^2\). 
	
	We are left to analyse the range of scales
	\[  C \min \{r, R\}  \leq \lambda^{k} \leq \epsilon^{-1} \rho  .  \]
	
	If \( p \in \mR_{\beta_1, \gamma}\) then we have \(\rho \sim r \sim R\), and the above range of \(k\) is uniformly bounded by \(\log(\epsilon^{-1})\), up to additive and multiplicative constants. 
	
	If \( p \in \mR_{\beta_1}\) the range is equivalent to \(r \leq \lambda^{k} \leq \epsilon^{-1}R\), up to constants. For those scales with \(\epsilon^{-1} r <\lambda^k < R\), the rescaled ball \(\lambda^{-k}B(p, \lambda^{k})\) is isometric to a unit ball in \(\C_{\beta_1} \times \C\) centred with distance \(< \epsilon\) from the apex (any of the points on the conical line can be considered as the apex). These cone models reduce the range of bad scales to
	\[ \{r \leq \lambda^{k} \leq \epsilon^{-1}R\} \cap \left( \{\lambda^k \leq \epsilon^{-1}r\} \cup
	 \{\lambda^{k} \geq R\} \right) \subset I_1 \cup I_2 .  \]
	Where \(I_1 = \{k: \,\ R \leq \lambda^{k} \leq \epsilon^{-1} R \}\) and \(I_2 = \{k: \,\ r \leq \lambda^{k} \leq \epsilon^{-1} r \}\), have length uniformly bounded by \(\log(\epsilon^{-1})\). See Figure \ref{fig:scales}. For the region \(\mR_{\gamma}\) we argue in the same way as for  \(\mR_{\beta_1}\).

	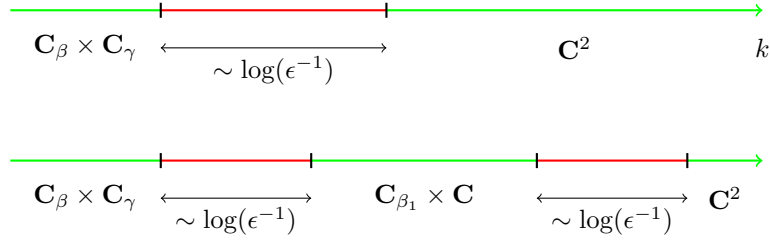
\begin{figure}
		\centering
		\begin{tikzpicture}
		
		\draw  (-.5, -.8) node (asd) [scale=1] {\(\sim \log (\epsilon^{-1})\)};
		\draw  (-3, -.5) node (asd) [scale=1] {\(\mathbf{C}_{\beta} \times \mathbf{C}_{\gamma}\)};
		\draw  (3.5, -.5) node (asd) [scale=1] {\( \mathbf{C}^{2}\)};
		\draw  (6, -.5) node (asd) [scale=1] {\(k\)};
		
		\draw[thick, green](-4,0) to (-2,0);
		\draw[thick, red](-2,0) to (1,0);
		\draw[thick, green, ->](1,0) to (6,0);
		\draw[thick](-2,-.1) to (-2,.1);
		\draw[thick](1,-.1) to (1,.1);
		
		\draw[<->](-2, -.5) to (1,-.5);

		\draw  (-1, -2.8) node (asd) [scale=1] {\(\sim \log (\epsilon^{-1})\)};
		\draw  (4, -2.8) node (asd) [scale=1] {\(\sim \log (\epsilon^{-1})\)};
		\draw  (-3, -2.5) node (asd) [scale=1] {\(\mathbf{C}_{\beta} \times \mathbf{C}_{\gamma}\)};
		\draw  (1.5, -2.5) node (asd) [scale=1] {\( \mathbf{C}_{\beta_1} \times \C \)};
		\draw  (5.5, -2.5) node (asd) [scale=1] {\(\C^2\)};
	
		\draw[thick, green](-4,-2) to (-2,-2);
		\draw[thick, red](-2,-2) to (0,-2);
		\draw[thick, green](0,-2) to (3,-2);
		\draw[thick, red](3,-2) to (5,-2);
		\draw[thick, green, ->](5,-2) to (6,-2);
		\draw[thick](-2,-2.1) to (-2,-1.9);
		\draw[thick](0,-2.1) to (0,-1.9);
		\draw[thick](3,-2.1) to (3,-1.9);
		\draw[thick](5,-2.1) to (5,-1.9);

		\draw[<->](-2, -2.5) to (0,-2.5);
		\draw[<->](3, -2.5) to (5,-2.5);

		\end{tikzpicture}
		\caption{Bad scales (red), good scales (green) and model cones for points in \(\mR_{\beta_1, \gamma}\) (top) and in \(\mR_{\beta_1}\) (bottom).}
		\label{fig:scales}
	\end{figure}

	\item \(\C \times \C_{F}\). It suffices to analyse \(\C_F\), as the cone models for the two parallel lines space are given by taking products with \(\C\). Fix \(0<\mu<1\) such that \(B(a_2, 3\mu) \cap B(a_3, 3\mu) = \emptyset\) and \(B(a_2, 3\mu), B(a_3, 3\mu)\) are isometric to balls centred at the apex of \(\C_{\beta_2}\) and \(\C_{\beta_3}\) respectively. We divide \(\C\) into three regions: \(\mR_{\beta_2} = \{d(w, a_2) < 2\mu\}\), \(\mR_{\beta_3} = \{d(w, a_3) < 2\mu\}\) and \(\mR_{\gamma} = \{d(w, a_2)> \mu, \,\ d(w, a_3)>\mu \} \). Let \(s = \min \{d(\cdot, a_2), d(\cdot, a_3) \} \) and let \(0<C=C(\beta_2, \beta_3)<1\) be such that \(B(p, Cs)\) is isometric to a Euclidean ball whenever \(p \in \C \setminus \{a_2, a_3\}\). W.l.o.g. we can assume that \(d(a_2, a_3)=1\). If \(k \ll 0\) is sufficiently negative so that
	\[\lambda^{-k}(s+1)< \epsilon, \]
	then \(\lambda^{-k}B(p, \lambda^k)\) has its two cone points at distance \(<\epsilon\) and its centre is at distance \(<\epsilon\) from the cone points; so we might say \(\lambda^{-k}B(p, \lambda^k)\) is \(\epsilon\)-close to the unit ball centred at the apex of \(\C_{\gamma}\). On the other hand, if \(k\) is sufficiently large so that
	\[\lambda^{k} < C s ; \]
	then \(\lambda^{-k}B(p, \lambda^k)\) is isometric to the Euclidean unit disc. We are left to analyse the range of scales
	\[ C s \leq \lambda^k \leq \epsilon^{-1}(s+1) . \]
	Up to constants, the length of this range is \(\log(\epsilon^{-1})+ \log(1+1/s)\). On \(\mR_{\gamma}\) the range is uniformly bounded because \(s\geq\mu\). On the region \(\mR_{\beta_2}\) we have \(s=d_2\) where \(d_2=d(a_2, \cdot)\). If
	\[ \epsilon^{-1} d_2 < \lambda^{k} < \mu  , \]
	then \(\lambda^{-k}B(p, \lambda^k)\) is isometric to a unit ball in \(\C_{\beta_2}\) with its centre at distance \(<\epsilon\) from the apex. Incorporating these model cones, the range of bad scales is reduced to 
	\[ \{C d_2 \leq \lambda^k \leq \epsilon^{-1}(d_2+1)\} \cap \left( \{\lambda^k \leq \epsilon^{-1}d_2\} \cup \{\lambda^k \geq \mu\} \right) \subset I_1 \cup I_2 . \]
	Here \(I_1=\{k: \,\ \mu \leq \lambda^k \leq \epsilon^{-1}(\mu+1) \}\) and \(I_2=\{k: \,\ Cd_2 \leq \lambda^k \leq \epsilon^{-1}d_2\}\). Up to constants, these two ranges of bad scales \(k\)'s have length uniformly bounded above by \(\log(\epsilon^{-1})\).
	
\end{itemize}

\begin{figure}
	\centering
	\begin{tikzpicture}  
	\node (a1) at (0,4) {\(\mathbf{C}_{\beta_1} \times \mathbf{C}_{\gamma}\)};  
	\node (a2) at (-2,2)  {\(\mathbf{C}_{\beta_1} \times \mathbf{C}\)};
	\node (a3) at (2,2)  {\(\mathbf{C} \times \mathbf{C}_{\gamma}\)};  
	\node (a4) at (0,0) {\(\mathbf{C}^2\)};  
	
	\draw[->] (a1) -- (a2); 
	\draw[->] (a1) -- (a3);  
	\draw[->] (a1) -- (a4);
	\draw[->] (a2) -- (a4);
	\draw[->] (a3) -- (a4);

	\node (a1) at (6,4) {\(\C \times \mathbf{C}_{\gamma}\)};  
	\node (a2) at (4,2)  {\(\C \times \mathbf{C}_{\beta_2}\)};
	\node (a3) at (8,2)  {\(\C \times \mathbf{C}_{\beta_3}\)};  
	\node (a4) at (6,0) {\(\mathbf{C}^2\)};

	\draw[->] (a1) -- (a2); 
	\draw[->] (a1) -- (a3);  
	\draw[->] (a1) -- (a4);
	\draw[->] (a2) -- (a4);
	\draw[->] (a3) -- (a4); 
	\end{tikzpicture}  
	\caption{Model cones for \(\cone\) (left) and \(\C \times \C_F \) (right).}
	\label{fig:MODELCONES}
\end{figure}

In Figure \ref{fig:MODELCONES} we summarize the above discussion about model cones in these two toy examples. Cones on the top model large scales and as we go along the arrows we pass to smaller scales. For each point there is a chain of model cones. The chain depends on which region the point lies, as follows.
\begin{itemize}
	\item Product of two cones: If \(p \in \mR_{\beta_1, \gamma}\) then the chain of model cones is \(\cone \to \C^2\); if \(p \in \mR_{\beta_1}\) then we have \(\cone \to \C_{\beta_1} \times \C \to \C^2\) (or it can also stop at \(\C_{\beta_1} \times \C\) if \(p\) lies on the line of cone angle \(2\pi\beta_1\)). Similarly for points in \(\mR_{\gamma}\).
	Since \(\cone\) is itself a cone, in the special case when \(p\) is the apex then \(\cone\) is always the model cone.
	
	\item Two parallel lines space: If \(p \in \mR_{\gamma}\) then the chain of model cones is \(\C \times \C_{\gamma} \to \C^2\); if \(p \in \mR_{\beta_2}\) then we have \(\C \times \C_{\gamma}  \to \C \times \C_{\beta_2} \to \C^2\) (or it can also stop at \(\C \times \C_{\beta_2}\) if \(p\) lies on the line of cone angle \(2\pi\beta_2\)). Similarly for points in \(\mR_{\beta_3}\). 
	
\end{itemize}

We say that two unit metric balls are \(\epsilon\)-close if their Gromov-Hausdorff distance is \(<\epsilon\). We extend this notion to balls of arbitrary radius by scale invariance, so \(B(p, r)\) and \(B'(p', r)\) are \(\epsilon\)-close if their Gromov-Hausdorff distance is \(<\epsilon r\). We now state our main result of this section.

\begin{proposition} \label{prop:MODELCONES}
	Let \(\epsilon>0\) and \(0<\lambda<1\). There is  \(N=N(\epsilon, \lambda)\) with the following property. If \(p\) is any point in \((\C^2, \omega)\) then for every \(k \geq 0\) except at most \(N\) of them, the balls \(\lambda^{-k}B(p, \lambda^k)\) are \(\epsilon\)-close to the unit ball centred at the apex of a model cone in \(\mathcal{C}\).
\end{proposition}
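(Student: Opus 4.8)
The strategy is to bootstrap the two toy examples already analyzed above --- the product of two cones $\cone$ and the two parallel lines space $\C \times \C_F$ --- to the approximate solution $\omega$, exploiting that $\omega$ is polynomially close to the appropriate model metric in each region. I would first record a transfer principle: if two K\"ahler metrics $g_1,g_2$ on a ball satisfy $|g_1-g_2|_{g_2}\le\delta$, then lengths of curves are distorted by at most a factor $1+C\delta$, so after rescaling to unit radius the induced balls are $C\delta$-close in Gromov--Hausdorff distance, with $C$ independent of the scale. Consequently the model-cone statement for $\omega$ will follow from the corresponding statements for $\omega_{\cone}$ and for the warped product $\omega_{\mathrm w}=\omega_{\C_{\beta_1}}+\rho^{2\gamma/\beta_1}\omega_F$, once the transfer errors are shown to spoil only boundedly many scales.

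Next I would split a fixed neighbourhood of the origin into the outer region $\{R>(1/3)r^{\alpha_0}\}$, where \eqref{eq:COMPHERM} together with Lemma \ref{lem:gluingreg} gives $|\omega-\omega_{\cone}|_{\omega}=O(\rho^{2c})$, and the colliding region $\{R<r^{\alpha_0}\}$, where Lemma \ref{lem:COLLREG} gives $|\omega-\omega_{\mathrm w}|_{\omega}=O(\rho^{\alpha_0-1})$. Fixing $\rho_*=\rho_*(\epsilon)$ so that the transfer error is $<\epsilon/2$ on $\{\rho<\rho_*\}$, I would argue by a dichotomy on $\rho(p)$. If $\rho(p)<\rho_*$, the transfer principle reduces $\lambda^{-k}B(p,\lambda^k)$ to the corresponding rescaled ball for the model metric at every scale with $\lambda^k<\rho_*$, the finitely many larger scales (at most $\lceil\log_\lambda\rho_*\rceil$ of them) being absorbed into $N$. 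If instead $\rho(p)\ge\rho_*$, then $p$ is a bounded distance from the origin and the lines $L_2,L_3$ near $p$ are at mutual distance bounded below in terms of $\epsilon$; all the scales at which the local picture changes then lie in a range of $k$ of length $\lesssim\log(\epsilon^{-1})$, while outside that range the ball is either Euclidean or a standard cone neighbourhood of a single line $L_j$ (by the already-established conical structure of $\omega$), hence $\epsilon$-close to $\C^2$ or to $\C_{\beta_j}\times\C$. For the largest scales in either case, where $\lambda^{-k}\rho(p)<\epsilon$, Lemma \ref{lem:TGCONE} directly gives $\epsilon$-closeness to a ball near the apex of $\cone$.

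It then remains to read off the chains of model cones from the two toy examples. For $p$ in the outer region I would apply verbatim the analysis of $\cone$: according to whether $p\in\mR_{\beta_1,\gamma}$, $\mR_{\beta_1}$ or $\mR_{\gamma}$, the chain is $\cone\to\C^2$, $\cone\to\C_{\beta_1}\times\C\to\C^2$, or $\cone\to\C_{\gamma}\times\C\to\C^2$, with the bad scales confined to at most two intervals each of length $\sim\log(\epsilon^{-1})$. For $p$ in the colliding region I would apply the $\C\times\C_F$ analysis to the second factor of $\omega_{\mathrm w}$: Lemma \ref{lem:FMTGCONE} supplies the merged model $\C_{\gamma}\times\C$ at scales above the separation of $L_2,L_3$, the local cone structure near $a_2$ and $a_3$ furnished by Lemma \ref{lem:LOCREGFM} supplies $\C_{\beta_2}\times\C$ and $\C_{\beta_3}\times\C$, and $\C^2$ appears at the finest scales; again the bad scales lie in boundedly many intervals of length $\sim\log(\epsilon^{-1})$, and across the gluing region $\{r^{\alpha_0}/2<R<4r^{\alpha_0}\}$ the two chains splice with the common top cone $\cone$.

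The main obstacle is the colliding region, precisely because $\omega_{\mathrm w}$ is warped rather than a product and the lines $L_2,L_3$ collide at the super-linear rate $\rho^{\gamma/\beta_1}$. I expect the delicate points to be: verifying that across each relevant ball the warping factor $\rho^{2\gamma/\beta_1}$ varies by a negligible amount, so that $\omega_{\mathrm w}$ genuinely looks, at that scale, like a product of the fixed cone $\C_{\beta_1}$ with a single fixed rescaling of $\C_F$; and checking at the transition between the outer and colliding descriptions that the two chains match without generating bad scales beyond the bounded count already incurred. Granting these, the total number of exceptional scales is bounded by a fixed multiple of $\log(\epsilon^{-1})$ plus $\lceil\log_\lambda\rho_*(\epsilon)\rceil$, which is the desired constant $N=N(\epsilon,\lambda)$, uniform in $p$.
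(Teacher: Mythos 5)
Your proposal is correct and follows essentially the same route as the paper's own proof: the paper likewise treats the outer regions (where \(\omega=\omega_{\cone}\) exactly) by quoting the \(\cone\) toy example verbatim, treats the colliding regions by reducing to the \(\C\times\C_{F}\) toy example after freezing the warp factor---``introducing a cut in the \(\C_{\beta_1}\) factor and rescaling balls by \(r^{-\gamma/\beta_1}\)''---and uses Lemma \ref{lem:TGCONE} for the largest scales, producing exactly the chains of model cones you describe. The two ``delicate points'' you defer (negligible variation of \(\rho^{2\gamma/\beta_1}\) over balls of radius \(\le\epsilon r\), and splicing across the gluing region) are precisely what the paper's cut-and-rescale device dispatches, with no more detail than you give, so your outline matches the paper's argument in both structure and level of rigour.
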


\begin{proof}
    If $p$ is the origin, then all sufficiently small balls are comparable to the tangent cone, which is the model cone $\mathbf C_{\beta_1} \times \mathbf C_{\gamma}$. 
    
	Otherwise, we divide into regions as follows.
	
	\(\mR_{\beta_1} = \{r < \mu R\}\). In this region \(\omega = \omega_{\cone}\), so the previous discussion applies. The chain of model cones for points in this region is \(\cone \to \C_{\beta_1} \times \C \to \C^2\) and the range of bad scales is contained, up to constants, in the union of the two intervals \(I_1 = \{k: \,\ R \leq \lambda^{k} \leq \epsilon^{-1} R \}\) and \(I_2 = \{k: \,\ r \leq \lambda^{k} \leq \epsilon^{-1} r \}\) which have length uniformly bounded above by \(\log(\epsilon^{-1})\).
	
	\(\mR_{\beta_1, \gamma} = \{(\mu/2) r <  R < (2/\mu) r \}\). Here \( r \sim R \sim \rho\) and we still have \(\omega = \omega_{\cone}\), so the previous discussion applies again. The chain of model cones for points in this region is \(\cone \to \C^2\) and the range of bad scales is contained, up to constants, in the interval \(I = \{k: \,\ \rho \leq \lambda^{k} \leq \epsilon^{-1} \rho \}\) which has length uniformly bounded above by \(\log(\epsilon^{-1})\).
	
	\(\mR_{\gamma}^{'} = \{R<\mu r\} \cap \{R > 2 r^{\alpha_0}\}\). Here \(\rho \sim r\) and \(\omega = \omega_{\cone}\). As in the case of \(\cone \) above, the chain of model cones is \(\cone \to \C \times \C_{\gamma} \to \C^2\) and the range of bad scales is contained, up to constants, in the union of the two intervals \(I_1 = \{k: \,\ r \leq \lambda^{k} \leq \epsilon^{-1} r \}\) and \(I_2 = \{k: \,\ R \leq \lambda^{k} \leq \epsilon^{-1} R \}\) which have length uniformly bounded above by \(\log(\epsilon^{-1})\).
	
	\(\mR_{\gamma}^{''} = \{R < 3 r^{\alpha_0}\} \cap \{s> \mu r^{\gamma/\beta_1}\} \),
	 where \(s=\min\{d_2, d_3\}\) and \(d_2, d_3\) are the distances to the conical lines \(L_2, L_3\).  Here \(\rho \sim r\) and  \(\omega= \omega_{\C_{\beta_1}} + r^{2\gamma/\beta_1} \omega_F + E \) up to a small error \(|E|_{\omega} = O(\rho^{-1}R)\). For \(\lambda^k > \epsilon^{-1} r\) the balls \(\lambda^{-k}B(p, \lambda^k)\) are \(\epsilon\)-close to \(B_1 \subset \cone\) as in Lemma \ref{lem:TGCONE}. For \(\lambda^k < r\) we introduce a cut in the \(\C_{\beta_1}\) factor and rescale balls centred at \(p\) by \(r^{-\gamma/\beta_1}\)
	to reduce to the situation \(\mR_{\gamma} \subset  \C \times \C_F\) considered before. If \((s+1)\lambda^{-k} < \epsilon r^{\gamma/\beta_1}\) then \(\lambda^{-k}B(p, \lambda^k)\) is \(\epsilon\)-close to \(B_1 \subset \C \times \C_{\gamma}\). If \(\lambda^k < \mu r^{-\gamma/\beta_1} s\) then \(\lambda^{-k}B(p, \lambda^k)\) is \(\epsilon\)-close to \(B_1 \subset \C^2\).
	
	\(\mR_{\beta_2}=\{R<3r^{\alpha_0}\}\cap\{d_2<2\mu r^{\gamma/\beta_1}\}\).  Here \(\rho \sim r\) and \(s=d_2\), same as before  \(\omega= \omega_{\C_{\beta_1}} + r^{2\gamma/\beta_1} \omega_F + E \) with \(|E|_{\omega} = O(\rho^{-1}R)\). For \(\lambda^k > \epsilon^{-1} r\) the balls \(\lambda^{-k}B(p, \lambda^k)\) are \(\epsilon\)-close to \(B_1 \subset \cone\) as in Lemma \ref{lem:TGCONE}. For \(\lambda^k < r\) we introduce a cut in the \(\C_{\beta_1}\) factor and rescale balls centred at \(p\) by \(r^{-\gamma/\beta_1}\)
	to reduce to the situation \(\mR_{\beta_2} \subset  \C \times \C_F\) considered before. If \((d_2+1)\lambda^{-k} < \epsilon r^{\gamma/\beta_1}\) then \(\lambda^{-k}B(p, \lambda^k)\) is \(\epsilon\)-close to \(B_1 \subset \C \times \C_{\gamma}\). If \(\lambda^k < \mu r^{-\gamma/\beta_1} d_2\) then \(\lambda^{-k}B(p, \lambda^k)\) is \(\epsilon\)-close to \(B_1 \subset \C^2\). We interpolate by introducing the range of good scales \(\lambda^{-k}d_2<\epsilon r^{\gamma/\beta_1}\) modelled by the cone \(\C \times \C_{\beta_2}\).
	
	\(\mR_{\beta_3}=\{R<3r^{\alpha_0}\}\cap\{d_3<2\mu r^{\gamma/\beta_1}\}\) is symmetric to \(\mR_{\beta_2}\).
	
	We are choosing \(\mu\) sufficiently small so that \(\{d_2<3\mu r^{\gamma/\beta_1}\} \cap \{d_3 < 3\mu r^{\gamma/\beta_1}\} = \emptyset \). In particular this implies that 
	\[\mR_{\beta_2} \cup \mR_{\beta_3} \cup\mR_{\gamma}^{''} \cup \mR_{\gamma}^{'} = \{R < \mu r\}  \]
	and the regions cover. 
\end{proof}

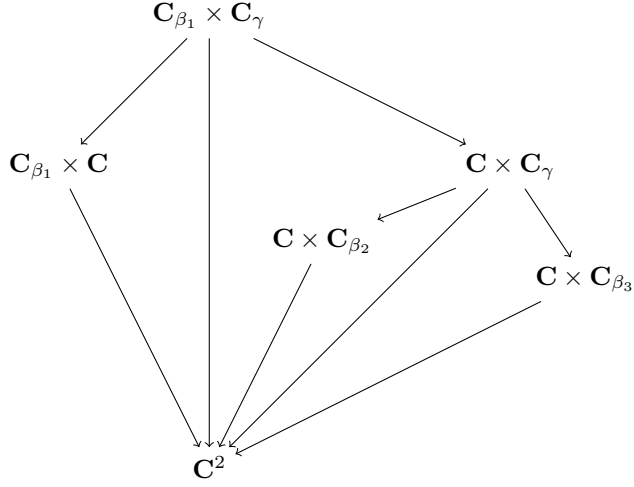
\begin{figure}
	\centering
	
	\begin{tikzpicture}
	
	\node (a1) at (0,6) {\(\mathbf{C}_{\beta_1} \times \mathbf{C}_{\gamma}\)};  
	\node (a2) at (-2,4)  {\(\mathbf{C}_{\beta_1} \times \mathbf{C}\)};
	\node (a3) at (4,4)  {\(\mathbf{C} \times \mathbf{C}_{\gamma}\)};
	\node (a4) at (1.5,3) {\(\mathbf{C} \times \mathbf{C}_{\beta_2}\)};
	\node (a5) at (5,2.5) {\(\mathbf{C} \times \mathbf{C}_{\beta_3}\)}; 
	\node (a6) at (0,0) {\(\mathbf{C}^2\)};  
	
	\draw[->] (a1) -- (a2); 
	\draw[->] (a1) -- (a3);  
	\draw[->] (a1) -- (a6);
	\draw[->] (a2) -- (a6);
	\draw[->] (a3) -- (a4);
	\draw[->] (a3) -- (a6);
	\draw[->] (a3) -- (a5); 
	\draw[->] (a4) -- (a6);
	\draw[->] (a5) -- (a6);         
	
	\end{tikzpicture}
	\caption{Chains of model cones for $(\C^2,\omega)$. Note that as we follow the arrows, passing to smaller scales, the volume densities (i.e. product of the cone angle factors) increase.}
	\label{fig:TREEMODELCONES}
\end{figure}

We summarize the proof of Proposition \ref{prop:MODELCONES} in Figure \ref{fig:TREEMODELCONES}, illustrating the chains of model cones.  The diagram is a coupling of the two diagrams in Figure \ref{fig:MODELCONES}.

\subsection{Quasi-isometry to \(\R^4\)}

\begin{lemma} \label{lem:QUASIISOM}
	Up to a (singular) change of coordinates, the approximate solution is quasi-isometric to the Euclidean metric. More precisely, there is a diffeomorphism \(\tilde{\Phi}\) on the complement of the three lines that extends over the lines as a continuous homeomoprhism and such that
	\begin{equation*}
	C^{-1}g_{\R^4} \leq \tilde{\Phi}^* g \leq C g_{\R^4} 
	\end{equation*}
	for some uniform \(C>0\).
\end{lemma}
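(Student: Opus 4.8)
The plan is to exhibit one explicit singular change of coordinates and to check the bi-Lipschitz bounds region by region, so that no delicate gluing of separately-defined maps is needed. Let \(G_0\colon\C\to\C\) be a bi-Lipschitz homeomorphism of \((\C,\omega_F)\) onto the Euclidean plane that straightens the two cone points and is asymptotic to the cone straightening at infinity: \(G_0(\xi)=|\xi|^{\gamma-1}\xi\) for \(|\xi|\) large, while near \(a_j\) one has \(G_0(\xi)=G_0(a_j)+c_j(\xi-a_j)|\xi-a_j|^{\beta_j-1}+\dots\) in the flat coordinate of Lemma~\ref{lem:LOCREGFM}. Such a \(G_0\) exists because \((\C,\omega_F)\) is a complete flat surface with finitely many cone points, uniformly bi-Lipschitz to the single cone \(\C_\gamma\) at infinity. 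I would then set
\[ \Phi(z,w)=\bigl(\,|z|^{\beta_1-1}z,\; z\,|z|^{\gamma-1}\,G_0(w/z)\,\bigr)=(\zeta,\eta), \]
and take \(\tilde\Phi=\Phi^{-1}\). The map \(\Phi\) is single-valued (each of \(z/|z|\), \(|z|^{\gamma}\), \(G_0(w/z)\) is), and for \(z\neq0\) it is a homeomorphism: \(\zeta\) recovers \(z\), and then \(G_0(w/z)=\eta/(z|z|^{\gamma-1})\) recovers \(w\). As \(z\to0\) with \(w\neq0\) one has \(w/z\to\infty\) and \(\eta\to|w|^{\gamma-1}w\), so \(\Phi\) extends continuously across \(L_1\); it extends continuously across \(L_2,L_3\) and the origin as well. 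The decisive point, which removes the winding difficulty discussed below, is the prefactor \(z|z|^{\gamma-1}\) rather than \(|z|^{\gamma}\): the phases cancel so that \(\eta\to|w|^{\gamma-1}w\) is independent of \(\arg z\) for large \(w/z\), matching the product straightening of \(\cone\), while the singularities of \(G_0(w/z)\) sit exactly on \(L_2,L_3\).

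To prove the bi-Lipschitz bounds I would compute the differential of \(\Phi\), which is block lower triangular because \(\zeta=\zeta(z)\) only:
\[ d\Phi=\begin{pmatrix}\partial\zeta & 0\\ \partial_z\eta & \partial_w\eta\end{pmatrix}. \]
The base block satisfies \(|\partial\zeta|\sim\beta_1|z|^{\beta_1-1}\), comparable to \(\omega_{\C_{\beta_1}}^{1/2}\). For the fibre block, \(\partial_w\eta=|z|^{\gamma-1}\partial_\xi G_0(\xi)\) with \(\xi=w/z\), and since \(G_0\) is bi-Lipschitz for \(\omega_F\) one has \(|\partial_\xi G_0|\sim\gamma|\xi-a_2|^{\beta_2-1}|\xi-a_3|^{\beta_3-1}\); hence \(|\partial_w\eta|\sim\gamma|z|^{\gamma-1}|\xi-a_2|^{\beta_2-1}|\xi-a_3|^{\beta_3-1}\), the square root of the \(w\)-coefficient of \(\omega_H\) in \eqref{eq:HERMMET}. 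Thus both diagonal blocks are nondegenerate and comparable to the blocks of the diagonal Hermitian metric \(\omega_H\), and for a block lower triangular differential the bi-Lipschitz claim reduces to a bound on \(\partial_z\eta\) measured in the \(\omega_H\)-orthonormal frame. A direct computation gives
\[ \partial_z\eta=|z|^{\gamma-1}\Bigl(\tfrac{\gamma+1}{2}G_0(\xi)-\xi\,\partial_\xi G_0(\xi)\Bigr), \]
and the model \(G_0(\xi)=|\xi|^{\gamma-1}\xi\) satisfies \(\xi\,\partial_\xi G_0=\tfrac{\gamma+1}{2}G_0\) identically, so the bracket vanishes to leading order as \(|\xi|\to\infty\) and is merely bounded for \(|\xi|=O(1)\).

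It then remains to run this estimate through the regions of Section~\ref{sect:APROXSOL}. Near the origin \(g\) is uniformly equivalent to \(\omega_H\) by \eqref{eq:COMPHERM}, so the comparison applies directly: in the collision region (\(|\xi|=O(1)\)) the off-diagonal block contributes \(O(|z|^{\gamma-\beta_1})\to0\) in the \(\omega_H\)-frame because \(\gamma>\beta_1\), and in the product and gluing regions (large \(|\xi|\)) the cancellation above together with the decay of \(G_0(\xi)-|\xi|^{\gamma-1}\xi\) furnished by the expansion \eqref{fmpotential} makes the off-diagonal term \(o(1)\), using \(2\gamma>\beta_1(1+\alpha_0)\). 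Near \(L_2\setminus\{0\}\) (and symmetrically \(L_3\)) I would instead use the intrinsic structure, since \(g\sim\omega_H\) only holds near the origin: there \(g\) is a standard \(\C_{\beta_2}\times\C\) product cone, and in coordinates \((z_1,z_2)=(w-a_2z,z)\) the map straightens the transverse \(\C_{\beta_2}\) factor (the singular part of \(\eta\) is \(\sim z_1|z_1|^{\beta_2-1}\)) while the along-\(L_2\) block stays bounded and nondegenerate, so \(\Phi\) is bi-Lipschitz there as well.

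The main obstacle is precisely this global consistency. Any factorwise straightening that tracks the two cone points \(w=a_jz\), which rotate by a full turn as \(z\) circles the origin, would naively pick up a Dehn-twist monodromy in the fibre and fail to match the \(\arg z\)-independent straightening \(|w|^{\gamma-1}w\) that is forced near \(L_1\setminus\{0\}\) (there any \(\arg z\)-dependence of \(\eta\) of size \(R\) produces an uncontrolled cross term \(\sim R/r\to\infty\) as \(r\to0\)). The prefactor \(z|z|^{\gamma-1}\) is what reconciles the two requirements: it is single-valued, it cancels the spurious phase at infinity, and—because the reconciliation takes place where \(R\lesssim r\), i.e.\ where the fibre is small compared with the base—the resulting cross term is controlled rather than unbounded. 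Verifying the cross-term bounds uniformly through the gluing region, where \(G_0\) is only asymptotically the cone map, is the part that requires genuine care; the quantitative inputs are the decay of the correction to \(G_0\) from Proposition~\ref{propmetriconC} and the inequality \(\gamma>\beta_1\).
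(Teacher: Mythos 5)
Your construction coincides, up to inversion, with the paper's own: the paper sets \(\tilde\Phi(u,v)=(z,\Phi_z(v))\) with \(\Phi_z=m_z\circ\Phi\circ m_{|z|^{1-\gamma}z^{-1}}\), and \(\Phi_z^{-1}(w)=z|z|^{\gamma-1}\Phi^{-1}(w/z)\) is exactly your \(\eta\) with \(G_0=\Phi^{-1}\); both arguments hinge on the point you isolate, namely that the fibre straightening equals the \(z\)-independent map \(|w|^{\gamma-1}w\) once \(|w/z|\) is large, so all cross terms are confined to the collision region and carry the factor \(|z|^{\gamma-\beta_1}\). The genuine difference is in the verification, and there your region-by-region route is the sounder one. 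The paper proves only the fibrewise (fixed-\(z\)) bi-Lipschitz bound and then concludes by asserting that \(g_H\) of \eqref{eq:HERMMET} is uniformly equivalent to \(g\); this equivalence actually fails near \(L_2\setminus\{0\}\) and \(L_3\setminus\{0\}\): on the vector \(\p_z+a_2\p_w\) tangent to \(L_2\), the metric \(g_H\) blows up like \(\gamma^2|a_2|^2|w-a_2z|^{2\beta_2-2}\), whereas \(g\), having standard cone singularities along \(L_2\), stays bounded there; for the same reason the asserted bound \(\tilde\Phi^*g_H\leq C g_{\R^4}\) cannot hold near the preimages of these lines. (The same conflation occurs in the last line of the proof of Lemma \ref{lem:COLLREG} and in \eqref{eq:COMPHERM}, which you quote; the correct comparison on the collision region is the warped product \(\omega_{\C_{\beta_1}}+|z|^{2\gamma}\omega_F(w/z)\), which is what the body of that proof actually establishes.) Consequently, your separate intrinsic treatment of \(L_2,L_3\) is not a convenience but a necessity, and, consistently with this, your intermediate claim that \(\tfrac{\gamma+1}{2}G_0-\xi\p_\xi G_0\) is ``merely bounded for \(|\xi|=O(1)\)'' is false precisely there: it blows up like \(|\xi-a_j|^{\beta_j-1}\) at the cone points, which is why the \(g_H\)-frame estimate cannot be run near the lines. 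When you do the transverse check near \(L_2\), keep the warping factor: the singular part of \(\eta\) is \(c_2|z|^{\gamma-\beta_2}z_1|z_1|^{\beta_2-1}\), which matches the transverse coefficient \(|z|^{2\gamma-2\beta_2}|z_1|^{2\beta_2-2}\) of the warped product and gives uniform constants. Finally, since your \(G_0\) equals \(|\xi|^{\gamma-1}\xi\) identically outside a compact set, the cross term vanishes (rather than merely decays) on the product and gluing regions, so the decay input from Proposition \ref{propmetriconC} and the condition \(2\gamma>\beta_1(1+\alpha_0)\) are not needed.
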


\begin{proof}
	The starting point is that if we pull-back the cone metric \(g_{\beta}=\beta^2 |z|^{2\beta-2}|dz|^2\) by \(u \to z= |u|^{1/\beta-1}u\), then \(g_{\beta}\) is quasi-isometric to the Euclidean metric \(g_{\R^2}=|du|^2\). In the sense that \(C^{-1}g_{\R^2} \leq g_{\beta} \leq C g_{\R^2}\) for some uniform \(C>0\).
	
	Consider now \((\C, g_F)\) with 
	\[g_F = \gamma^2 |w-a_2|^{2\beta_2-2} |w-a_3|^{2\beta_3-2} |dw|^2 . \]
	Introduce polar coordinates \((r_2,\theta_2)\), \((r_3, \theta_3)\) around the cone points \(a_2\) and \(a_3\). Let \(\Phi: \C \to \C\) be given by
	\begin{equation*}
	\Phi(v) = \begin{cases}
	|v|^{\gamma^{-1}{\chi_1}(|v|) -1} v & \mbox{ if } |v| > \Lambda \\
	r_2^{\beta_2^{-1} \chi_2(r_2)} e^{i\theta_2} + a_2 & \mbox{ if } |v-a_2| < \mu \\
	r_3^{\beta_3^{-1} \chi_3(r_3)} e^{i\theta_3} + a_3 & \mbox{ if } |v-a_3| < \mu \\
	v & \mbox{ elsewhere. } 
	\end{cases}
	\end{equation*}
	Here we have fixed some large \(\Lambda\) and small \(\mu\) so that the first three regions are pairwise disjoint. We have used a standard cut-off functions:
	\begin{align*}
	\chi_1(t) = 1 \,\ \mbox{ if } t \geq 3\Lambda, \hspace{3mm} \chi_1(t) = \gamma \,\ \mbox{ if } t \leq 2\Lambda, \\
	\chi_2(t) = 1 \,\ \mbox{ if } t \leq \mu/3, \hspace{3mm} \chi_2(t) = \beta_2 \,\ \mbox{ if } t \geq \mu/2 \\
	\chi_3(t) = 1 \,\ \mbox{ if } t \leq \mu/3, \hspace{3mm} \chi_3(t) = \beta_3 \,\ \mbox{ if } t \geq \mu/2 .	
	\end{align*}
	This way \(\Phi\) is a diffeomorphism of \(\C \setminus \{a_2, a_3\}\) that extends as a homeomorphism of \(\C\) fixing the cone points \(a_2, a_3\). Moreover, if we write \(w=\Phi(v)\), then 
	\[C^{-1} |dv|^2 \leq \Phi^*(g_F) \leq C |dv|^2 , \]
	for some uniform \(C>0\).
	
	Consider now the Hermitian metric
	\begin{align*}
	g_H &= \beta_1^2 |z|^{2\beta_1-2} |dz|^2 + \gamma^2 |w-a_2z|^{2\beta_2-2}|w-a_3z|^{2\beta_3-2} |dw|^2 \\
	&= \beta_1^2 |z|^{2\beta_1-2} |dz|^2  + |z|^{2\gamma} m_{z^{-1}}^*(g_F) ,
	\end{align*}
	where \(m_{z^{-1}}(w)=z^{-1}w\). For \(z \neq 0\),  define 
	\[\Phi_z = m_{z} \circ \Phi \circ m_{|z|^{-\gamma+1}z^{-1}} . \]
	This way \(\Phi_z(v) = |v|^{1/\gamma-1} v \) when \(|v| \geq 3 \Lambda |z|^{\gamma}\); so \(\Phi_z\) converges uniformly to \(v \to |v|^{1/\gamma-1}v\) as \(z \to 0\). Moreover, 
	\[C^{-1} |dv|^2 \leq \Phi_z^* \left( |z|^{2\gamma} m_{z^{-1}}^*(g_F) \right) \leq C |dv|^2 . \]
	Finally, we set
	\begin{equation*}
	\tilde{\Phi} (u, v) = \left(|u|^{\beta_1-1} u, \Phi_{|u|^{\gamma-1}u} (v)  \right)
	\end{equation*}
	and conclude that
	\begin{equation*}
	C^{-1}g_{\R^4} \leq \tilde{\Phi}^*(g_H) \leq C g_{\R^4} .
	\end{equation*}
	Since \(g_H\) is uniformly equivalent to the approximate solution metric \(g\), the lemma is proved.
\end{proof}

\section{Schauder Estimate} \label{sect:SCHAUDER}

On our approximate solution \((\C^2, \omega)\), it is straightforward to set up existence of weak solutions for the Poisson equation \(-\Delta u = f\).
We then use subquadratic harmonic polynomials to establish H\"older continuity for the complex Hessian of weak solutions via approximation and integral estimates. The technique is standard in PDE, see \cite[Chapter 5.4]{HanLin}. For related applications of this technique, in the setting of complete Ricci flat manifolds with maximal volume growth, see \cite{Ch19} and \cite{sz2}.

\subsection{Weak solutions}
For a regular domain \(\Omega \subseteq (\C^2, \omega)\), we define \(W^{1,2}(\Omega)\) as the completion of the Lipschitz functions under the norm
\[\|u\|_{W^{1,2}(\Omega)} = \left(\int_{\Omega}u^2 + \int_{\Omega}|\nabla u|^2 \right)^{1/2} .\]
In coordinates \(\tilde{\Phi}\) of Lemma \ref{lem:QUASIISOM} the space \(W^{1,2}\) agrees with the usual one of \(\R^4\).
Given  \(f \in L^1_{\mathrm{loc}}(\Omega)\), we say that \(u \in W^{1,2}(\Omega)\) is a weak solution of \(-\Delta u = f\), if 
\begin{equation}\label{eq:WEAKSOL}
\int_{\Omega} \langle \nabla u, \nabla \psi \rangle =  \int_{\Omega} f \psi  
\end{equation}
for every Lipschitz test function \(\psi\) with compact support contained in \(\Omega\).

Some parts of the standard theory of weak solutions to the Poisson equation carry over in a straightforward manner to our conical line space. We state the relevant inequalities in a scale invariant way.
Let \(B_r=B(p, r)\) be a geodesic ball in $(\C^2,\omega)$. Throughout the paper we use the scale-invariant \(L^2\)-norms
\begin{equation}
	\|u\|_{B_r} = \left( r^{-4} \int_{B_r} u^2 \right)^{1/2} .
\end{equation}

\begin{lemma}
	We have the following
	\begin{enumerate}
		\item Ahlfors regularity.
		\begin{equation*}
		C^{-1}r^4 \leq \mathrm{vol}(B_r) \leq C r^4 .
		\end{equation*}
		\item Rellich compactness. The inclusion \[W^{1,2}(B_r) \subset L^2(B_r)\] is compact.
		\item Poincar\'e inequality. If either the average of \(u \in W^{1, 2}\) on  \(B_r =B(x, r)\) is zero, or if its compactly supported, then
		\begin{equation*}
		\|u\|_{B_r} \leq C r\|\nabla u\|_{B_r} .
		\end{equation*}
		\item Caccioppoli inequality. Let \(u \in W^{1, 2}(B_r)\) solve \(-\Delta u = f\) with \(f \in L^2(B_r)\), then
		\begin{equation*}
			r\|\nabla u\|_{B_{r/2}} \leq r^2\|f\|_{B_r} + C\|u\|_{B_r} .
		\end{equation*}
	\end{enumerate}
\end{lemma}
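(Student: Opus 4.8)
The plan is to obtain all four statements from the quasi-isometry of Lemma \ref{lem:QUASIISOM}, which identifies $(\C^2, g)$, up to the singular change of coordinates $\tilde\Phi$ and bounded distortion, with $(\R^4, g_{\R^4})$. Since $\tilde\Phi$ is a homeomorphism that is a diffeomorphism off the three lines, and the lines have real codimension two and hence measure zero, pulling back by $\tilde\Phi$ changes volumes, $L^2$-norms and Dirichlet energies only by a factor controlled by the uniform constant $C$; this is the content of the remark that $W^{1,2}$ in the $\tilde\Phi$-coordinates agrees with the usual Sobolev space on $\R^4$. The inequality $C^{-1} g_{\R^4} \le \tilde\Phi^* g \le C g_{\R^4}$ says that $\tilde\Phi$ distorts distances by a factor at most $\sqrt{C}$, so a geodesic ball $B(p,r) \subset (\C^2, g)$ is sandwiched, in the Euclidean coordinates $\tilde\Phi^{-1}$, between two concentric Euclidean balls of comparable radii,
\[
B^{\R^4}(\tilde\Phi^{-1}(p),\, C^{-1/2} r) \subseteq \tilde\Phi^{-1}\big(B(p,r)\big) \subseteq B^{\R^4}(\tilde\Phi^{-1}(p),\, C^{1/2} r).
\]
Ahlfors regularity (1) is then immediate, as $\mathrm{vol}_g(B(p,r))$ is comparable to the Euclidean volume of the sandwiched region, which is comparable to $r^4$ with constants depending only on $C$.

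Statements (2) and (3) transfer in the same way. For Rellich compactness (2), the inclusion $W^{1,2}(B(p,r)) \hookrightarrow L^2(B(p,r))$ becomes, under $\tilde\Phi$, the inclusion of the usual Sobolev and Lebesgue spaces on a domain trapped between two Euclidean balls, so compactness follows from the classical Rellich--Kondrachov theorem on $\R^4$. For the Poincar\'e inequality (3), one uses that the image $\tilde\Phi^{-1}(B(p,r))$ is a John-type domain with uniform constant, being a bi-Lipschitz copy of a geodesic ball, and hence supports the scale-invariant $(1,2)$-Poincar\'e inequality; transporting this inequality back through $\tilde\Phi$ and absorbing the bounded metric distortion into the constant yields $\|u\|_{B_r} \le C r \|\nabla u\|_{B_r}$, with $C$ uniform because the quasi-isometry constant is. The two alternative hypotheses (zero average, or compact support) are preserved by $\tilde\Phi$, the former because volumes are comparable.

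The Caccioppoli inequality (4) I would prove directly from the weak formulation \eqref{eq:WEAKSOL}, using the quasi-isometry only to produce good cutoffs. Choose a Lipschitz $\eta$ with $\eta \equiv 1$ on $B_{r/2}$, $\mathrm{supp}\,\eta \subset B_r$ and $|\nabla \eta| \le C/r$ (for instance by pulling back a Euclidean cutoff through $\tilde\Phi$), and test \eqref{eq:WEAKSOL} against $\psi = \eta^2 u$. Writing $\nabla\psi = \eta^2 \nabla u + 2\eta u\,\nabla\eta$ gives
\[
\int \eta^2 |\nabla u|^2 = \int f\, \eta^2 u - 2\int \eta\, u\, \langle \nabla u, \nabla \eta\rangle.
\]
The crossed term is bounded via Cauchy--Schwarz with a small parameter by $\tfrac12 \int \eta^2 |\nabla u|^2 + C\int u^2 |\nabla \eta|^2$, which is absorbed on the left; the term with $f$ is estimated by Cauchy--Schwarz and the volume bound from (1). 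Inserting $|\nabla\eta|\le C/r$ then produces the scale-invariant estimate $r\|\nabla u\|_{B_{r/2}} \le r^2\|f\|_{B_r} + C\|u\|_{B_r}$.

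The main obstacle I expect is entirely one of uniformity and bookkeeping: one must ensure every constant is genuinely independent of the centre $p$ and the radius $r$, in particular that the shape of the image domains $\tilde\Phi^{-1}(B(p,r))$ does not degenerate as $p$ approaches the singular lines or as $r \to 0$, so that the Euclidean Poincar\'e and Rellich constants stay bounded. This is guaranteed by the uniformity of the distortion constant $C$ in Lemma \ref{lem:QUASIISOM} together with the ball-sandwiching above, which keeps the domains uniformly comparable to genuine Euclidean balls. One must also check that the singular set, which has measure zero and across which $\tilde\Phi$ is only a homeomorphism, causes no loss in the integration by parts underlying Caccioppoli; this is handled by approximating with Lipschitz functions supported away from the lines, exactly as in the definition of $W^{1,2}$.
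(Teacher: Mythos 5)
Your proposal is correct and follows the same route as the paper: items (1)--(3) are deduced from the quasi-isometry with $\R^4$ of Lemma \ref{lem:QUASIISOM} (under which the Sobolev norms are uniformly equivalent to the Euclidean ones), and the Caccioppoli inequality (4) is obtained by testing the weak formulation \eqref{eq:WEAKSOL} against $\psi = \eta^2 u$ with a cutoff $\eta$ supported in $B_r$ and equal to $1$ on $B_{r/2}$. Your elaborations (ball sandwiching, absorption of the crossed term) are exactly the standard details the paper leaves implicit.
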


\begin{proof}
	The first three items follow from the fact that \(\omega\) is quasi-isometric to the Euclidean metric by Lemma \ref{lem:QUASIISOM}; indeed the Sobolev norms are uniformly equivalent to the standard Euclidean ones after pulling back functions by the map \(\Phi\) of Lemma \ref{lem:QUASIISOM}.
	The last item follows by testing Equation \eqref{eq:WEAKSOL} against \(\psi = \eta^2 u\), with \(\eta\) a compactly supported function in \(B_r\) equal to \(1\) on \(B_{r/2}\).
\end{proof}

A standard contradiction argument, that combines
the Caccioppoli inequality together with Rellich compactness, gives the following.

\begin{lemma}[Harmonic approximation] \label{lem:HARMAPROX}
		For every \(\epsilon>0\) there is \(\delta>0\) with the following property: If \(u \in W^{1, 2}(B_1)\) satisfies \(\Delta u = f\) with \(\|u\|_{L^2(B_1)} \leq 1\) and \(\|f\|_{L^2(B_1)} < \delta\), then there is a weak harmonic function \(h \in W^{1, 2}(B_{1/2})\) such that \(\|u-h\|_{L^2(B_{1/2})} < \epsilon\).
\end{lemma}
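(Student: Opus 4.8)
The plan is to argue by contradiction, using the Rellich compactness together with the Caccioppoli inequality established in the preceding lemma. Suppose the statement fails for some fixed \(\epsilon_0 > 0\). Then there is a sequence \(u_k \in W^{1,2}(B_1)\) solving \(\Delta u_k = f_k\) weakly, with \(\|u_k\|_{L^2(B_1)} \le 1\) and \(\|f_k\|_{L^2(B_1)} < 1/k\), yet \(\|u_k - h\|_{L^2(B_{1/2})} \ge \epsilon_0\) for every weak harmonic function \(h \in W^{1,2}(B_{1/2})\). The goal is to produce, in the limit, a genuinely harmonic function that violates this last inequality.

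First I would derive a uniform \(W^{1,2}\) bound on the smaller ball. Applying the Caccioppoli inequality with \(r=1\) gives \(\|\nabla u_k\|_{L^2(B_{1/2})} \le C\big(\|f_k\|_{L^2(B_1)} + \|u_k\|_{L^2(B_1)}\big) \le C(1/k + 1)\), which together with \(\|u_k\|_{L^2(B_{1/2})} \le 1\) yields a bound \(\|u_k\|_{W^{1,2}(B_{1/2})} \le C'\) independent of \(k\). By the Rellich compactness of the inclusion \(W^{1,2}(B_{1/2}) \subset L^2(B_{1/2})\), after passing to a subsequence we have \(u_k \to u_\infty\) strongly in \(L^2(B_{1/2})\) and \(\nabla u_k \rightharpoonup \nabla u_\infty\) weakly in \(L^2(B_{1/2})\), with \(u_\infty \in W^{1,2}(B_{1/2})\).

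Next I would check that \(u_\infty\) is weakly harmonic. Fix any Lipschitz test function \(\psi\) with compact support in \(B_{1/2}\). The weak formulation \(\int_{B_{1/2}} \langle \nabla u_k, \nabla \psi \rangle = \int_{B_{1/2}} f_k \psi\) passes to the limit: the left side converges to \(\int_{B_{1/2}} \langle \nabla u_\infty, \nabla \psi \rangle\) by weak convergence of the gradients, while the right side is bounded by \(\|f_k\|_{L^2(B_1)}\|\psi\|_{L^2} \to 0\) by Cauchy--Schwarz. Hence \(\int_{B_{1/2}} \langle \nabla u_\infty, \nabla \psi \rangle = 0\) for all such \(\psi\), so \(u_\infty\) is a weak harmonic function in \(W^{1,2}(B_{1/2})\). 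Taking \(h = u_\infty\) then gives \(\|u_k - h\|_{L^2(B_{1/2})} = \|u_k - u_\infty\|_{L^2(B_{1/2})} \to 0\), which is \(< \epsilon_0\) for large \(k\), contradicting the choice of the sequence.

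The only point requiring care, rather than a genuine obstacle, is that the \(B_r\) are geodesic balls in the singular conical space rather than Euclidean balls. This is exactly what the quasi-isometry of Lemma \ref{lem:QUASIISOM} handles: it is what guarantees the Caccioppoli inequality, the Poincaré inequality, and the Rellich compactness invoked above (all stated in the preceding lemma). Beyond that, the argument is the standard soft compactness scheme, and in particular no explicit quantitative dependence of \(\delta\) on \(\epsilon\) is needed, since the contradiction produces the required \(\delta\) implicitly.
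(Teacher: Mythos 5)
Your proof is correct and is exactly the argument the paper has in mind: the paper states this lemma follows by ``a standard contradiction argument, that combines the Caccioppoli inequality together with Rellich compactness,'' which is precisely your scheme (contradicting sequence, uniform \(W^{1,2}\) bound on \(B_{1/2}\) via Caccioppoli, strong \(L^2\)/weak \(W^{1,2}\) limit via Rellich, and passage to the limit in the weak formulation to produce the harmonic competitor). Your closing remark is also the right one: the only non-Euclidean input is that these functional inequalities hold on the singular space, which the paper secures through the quasi-isometry of Lemma \ref{lem:QUASIISOM}.
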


It follows from the Ahlfors regularity of the measure that Campanato's \(L^2\) characterization of H\"older spaces holds \cite{Ca63,Ca65,GilTru,HanLin}. We will measure H\"older continuity in terms of these \(L^2\) norms: 
\begin{equation}
	c^{-1} |u|_{\alpha} \leq r^{-\alpha}\Big \|u - \fint_{B_r} u \Big \|_{B_r} \leq |u|_{\alpha} .
\end{equation}

Another consequence of the quasi-isometry with \(\R^4\) is that we can apply De Giorgi-Nash-Moser and  conclude that weak harmonic functions are indeed H\"older continuous \cite{GilTru}. We will proceed to establish higher order estimates for these weak harmonic functions.

\subsection{Subquadratic harmonic polynomials and reference functions}

We begin by recalling the space of subquadratic homogeneous harmonic functions on \(\coneb\) with angles \(0<\beta \leq 1\) and \(0<\gamma\leq 1\) as in \cite[Section 3]{dBoEdw}.\footnote{Here we discuss the general picture, but later we will restrict to \(\beta=\beta_1, \beta_2, \beta_3, 1\) and \(\gamma=\gamma, 1\) depending on the model cone.} We regard \(\coneb\) as a cone whose link is a three-sphere endowed with a constant sectional curvature \(1\) metric with conical singularities along two Hopf circles (or only one if either \(\gamma=1\) or $\beta=1$, and none if \(\beta=\gamma=1\)). This singular metric on the three-sphere is quasi-isometric to the standard round metric. As a consequence, the Friedrich extension of the Laplacian
has discrete spectrum and the corresponding \(W^{1,2}\) eigenfunctions form an orthogonal basis of \(L^2\). For each eigenfunction \(-\Delta_{S^3}\psi=\lambda \psi\) on the three-sphere there are two  corresponding homogeneous harmonic functions \(u=\rho^{d_{\pm}} \psi\) on \(\coneb\) with \(d_{+} \geq 0\) and \(d_{-}<0\)  the two solutions of the indicial equation
\[d(d+2)=\lambda .\]
The set of all such \(d_{\pm}\) is called the indicial root set \(\mathcal{I} \subset \R\). It is a discrete set, symmetric with respect to \(-1\), and \(\mathcal{I}\cap(-2,0)=\emptyset\).

We call the functions with $0\leq d \leq 2$ the subquadratic harmonics and define \(\mathcal{H}_{\leq 2}\) to be the subspace of $L^2$ spanned by the homogeneous subquadratic harmonic functions.

\begin{proposition}\label{prop:HARMPOL}
	On the cone \(\coneb\) with coordinates \((z,w)\), the space \(\mH_{\leq 2}\) is spanned by:
	\begin{itemize}
		\item the constant \(1\);
		\item  the real and imaginary parts of \(z\) if \(\beta \geq 1/2\), and \(z^2\) if \(\beta=1\);
		\item  the real and imaginary parts of \(w\) if \(\gamma \geq 1/2\), and \(w^2\) if \(\gamma=1\);
		\item the real and imaginary parts of \(zw\) if \(\beta=\gamma=1\);
		\item the quadratic harmonic function \(\beta^{-2} |z|^{2\beta} - \gamma^{-2} |w|^{2\gamma}\).
	\end{itemize}
\end{proposition}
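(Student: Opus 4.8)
The plan is to classify all homogeneous harmonic functions on $\C_\beta \times \C_\gamma$ by separation of variables and then impose $0 \le d \le 2$. Write $z = |z| e^{i\phi}$, $w = |w| e^{i\chi}$ and let $r = |z|^{\beta}$, $R = |w|^{\gamma}$ be the distances to the apex in the two factors, so $\rho^2 = r^2 + R^2$. Since the metric is an orthogonal product, the Laplacian splits as $\Laplace = \Laplace_\beta + \Laplace_\gamma$, with $\Laplace_\beta = \p_r^2 + r^{-1}\p_r + r^{-2}\p_\psi^2$ on the first factor, where $\psi = \beta \phi$ is the angular variable of period $2\pi\beta$ (and similarly for $\gamma$). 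The rotations in $\phi$ and $\chi$ are isometries commuting with the Friedrich Laplacian on the link; together with the discreteness of its spectrum recalled above, this decomposes every eigenfunction into angular modes $e^{ij\phi} e^{il\chi}$, $j,l \in \mathbf{Z}$. So it suffices to determine, for each fixed $(j,l)$, the homogeneous harmonic functions $u = \Phi(r,R)\, e^{ij\phi} e^{il\chi}$ with $\Phi$ homogeneous of some degree $d$, and to keep those with $d \in [0,2]$ (the branch $d=d_+$; the conjugate root $d_- = -2 - d_+ < 0$, consistent with $\mathcal I \cap (-2,0)=\emptyset$, never contributes).

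On the mode $e^{ij\phi}e^{il\chi}$ the equation $\Laplace u = 0$ reduces to $(\mathcal{L}_\nu + \mathcal{L}_\mu)\Phi = 0$, where $\mathcal{L}_\nu = \p_r^2 + r^{-1}\p_r - \nu^2 r^{-2}$ with $\nu = |j|/\beta$, and $\mathcal{L}_\mu$ is the same operator in $R$ with $\mu = |l|/\gamma$. From $\mathcal{L}_\nu(r^{\nu + 2p}) = 4p(p+\nu)\, r^{\nu + 2p - 2}$ I look for solutions $\Phi = \sum_{p+q=k} c_{pq}\, r^{\nu + 2p} R^{\mu + 2q}$, homogeneous of degree $d = \nu + \mu + 2k$. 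Harmonicity becomes a two-term recursion in the $c_{pq}$ ($k+1$ unknowns, $k$ relations) whose solution space is one-dimensional; the resulting $\Phi$ is regular, with leading corner behaviour $r^{\nu} R^{\mu}$, which is exactly the indicial root forced by $W^{1,2}$-regularity at the two cone lines. An indicial analysis of $\mathcal{L}_\nu + \mathcal{L}_\mu$ at these lines shows conversely that this is the only admissible root, so that in the sector $(j,l)$ homogeneous harmonics occur precisely at the degrees $d = |j|/\beta + |l|/\gamma + 2k$, $k = 0,1,2,\dots$, one complex function for each.

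Imposing $0 \le d \le 2$ leaves a finite list. Since $0 < \beta,\gamma \le 1$ we have $|j|/\beta \ge |j|$ and $|l|/\gamma \ge |l|$. If $k \ge 1$ then $d \ge 2$, with equality only for $k=1$, $j=l=0$; solving the recursion there gives the rotationally invariant function proportional to $r^2 - R^2 = \beta^{-2}|z|^{2\beta} - \gamma^{-2}|w|^{2\gamma}$, the last bullet. For $k=0$ the constraint is $|j|/\beta + |l|/\gamma \le 2$: the pair $(0,0)$ gives the constant; $(\pm1,0)$ gives $z,\bar z$ and needs $\beta \ge 1/2$; $(0,\pm1)$ gives $w,\bar w$ and needs $\gamma \ge 1/2$; $(\pm2,0)$ gives $z^2,\bar z^2$ and needs $\beta = 1$; $(0,\pm2)$ gives $w^2,\bar w^2$ and needs $\gamma = 1$; and $(\pm1,\pm1)$, possible only when $\beta=\gamma=1$, gives the degree-two products $zw,\bar z\bar w$ (and, from the modes $(1,-1),(-1,1)$, also $z\bar w,\bar z w$). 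Passing to real and imaginary parts recovers the listed functions. Completeness of the list, i.e.\ that they span $\mH_{\leq 2}$, then follows from the spectral decomposition recalled before the statement.

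The hard part will be the boundary analysis at the two Hopf circles $\{r=0\}$ and $\{R=0\}$: one must show that the Friedrich ($W^{1,2}$) condition selects the regular indicial root $r^{\nu}R^{\mu}$ and discards the singular one, since it is exactly this that both excludes spurious eigenfunctions and pins down the quantization $d = |j|/\beta + |l|/\gamma + 2k$. The recursion and the enumeration are then routine.
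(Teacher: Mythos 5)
Your route is genuinely different from the paper's, and your enumeration is correct --- indeed slightly sharper, since your modes \((1,-1),(-1,1)\) correctly produce \(z\bar w,\bar z w\) when \(\beta=\gamma=1\), functions which do lie in \(\mH_{\leq 2}(\C^2)\) but are absent from the proposition's list. However, as a proof the proposal has a genuine gap, and it sits exactly where the content of the proposition lies. Everything rests on the claim that in the angular mode \((j,l)\) the admissible homogeneous degrees are precisely \(d=|j|/\beta+|l|/\gamma+2k\), equivalently that \(W^{1,2}\)-regularity selects the indicial root \(r^{\nu}R^{\mu}\) at each conical line and that the link equation has no other eigenvalues. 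You assert this (``an indicial analysis \ldots shows conversely that this is the only admissible root'') and then concede at the end that it is ``the hard part''. Without it you have only verified that the listed functions belong to \(\mH_{\leq 2}\); the spanning statement, which is what the proposition asserts, is exactly the unproved converse. To close the gap one must treat, for each fixed \((j,l)\), the Sturm--Liouville problem on the link: a second-order ODE in the angle \(\theta\) with regular singular endpoints at the two Hopf circles, where the Friedrich condition picks out a one-dimensional solution space at each endpoint, and then show that the eigenvalues for which these match are exactly \(d(d+2)\) with \(d=\nu+\mu+2k\) --- for instance by recognizing the regular solutions as terminating hypergeometric (Jacobi-type) series and invoking their completeness in the weighted \(L^2\) of the interval. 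This is classical and doable, but it is where the actual work lives.

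For comparison, the paper sidesteps this spectral computation altogether: it writes \(\Delta=\Delta_\beta+\Delta_\gamma\), notes that for \(f\) homogeneous harmonic of degree \(d\in(0,2]\) the functions \(\Delta_\beta f\) and \(\Delta_\gamma f\) are themselves homogeneous harmonic on the four-dimensional cone of degree \(d-2\in(-2,0]\), and invokes the general fact that a cone of real dimension \(m\geq 4\) admits no homogeneous harmonic functions of degree in \((2-m,0)=(-2,0)\). Hence either \(\Delta_\beta f=\Delta_\gamma f=0\), or \(d=2\) and \(\Delta_\beta f=-\Delta_\gamma f\) is constant; in either case \(f\) is then classified by expanding in one variable at a time, using only the known subquadratic harmonics of the two-dimensional factors. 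So the paper trades your quantization of the joint spectrum for one cited spectral-gap fact plus the two-dimensional classification. Separately, fix a normalization slip: with your convention \(r=|z|^{\beta}\), \(R=|w|^{\gamma}\), the factor Laplacian carries an extra \(\beta^{2}\) (since \(\Delta_\beta|z|^{2\beta}=4\beta^{2}\)), so \(r^2-R^2\) is not harmonic unless \(\beta=\gamma\); you want the genuine distances \(|z|^{\beta}/\beta\), \(|w|^{\gamma}/\gamma\), for which the rotationally invariant quadratic harmonic is indeed \(\beta^{-2}|z|^{2\beta}-\gamma^{-2}|w|^{2\gamma}\), and the recursion coefficients change only by harmless constants.
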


The above proposition is proved by separation of variables as we sketch below, see \cite[Proposition 3.4]{dBoEdw} for more details. 
A key property that we will exploit is that \(|\dd P|_{\alpha} = 0\) for every \(P \in \mH_{\leq 2}\).

\begin{proof}
	We assume that at least one cone factor is not Euclidean, say \(\beta<1\).
	The starting point is that homogeneous harmonic functions of \(\C_{\beta}\) are given by the real and imaginary parts of \(z^k\) with \(k\) an integer number. In particular, the growth rates are of the form \(k/\beta\).
	Write \(\Delta_{\beta}\) for the Laplace operator of the \(\C_{\beta}\) factor and similarly for \(\Delta_{\gamma}\). These operators lower the degree of homogeneous functions by two.
	Let \(f\) be a homogeneous harmonic of degree \(0 < d \leq 2\), then
	\begin{equation*}
	\Delta_\beta f + \Delta_{\gamma} f = 0 .
	\end{equation*}
	The functions \(\Delta_{\beta} f\) and \(\Delta_{\gamma} f\) are homogeneous harmonic of degree \(d-2 \in (-2, 0]\). It is a general fact that Riemannian cones of real dimension \(m\geq 4\) have no homogeneous harmonic functions whose degree belongs to the interval \((2-m, 0)\). This also holds for cones over stratified spaces \cite{ACM} and also for Ricci limit spaces, as follows from the separation of variables formula for the Laplacian of the cone \cite[Theorem 4.15]{ding} together with the fact that the Laplcian on the link of the cone is definite. We conclude\footnote{Here one should prove that \(\Delta_{\beta} f = \rho^{d-2} \varphi\) with \(\varphi\) a \(W^{1,2}\) function on the link.} that
	the functions \(\Delta_{\beta} f\) and \(\Delta_{\gamma} f\) have homogeneous degree equal to zero; i.e. \(d=2\). This implies that \(\Delta_{\beta} f \equiv c\) and \(\Delta_{\gamma} f \equiv -c\) for some constant \(c\). If \(c=0\) we can regard \(f\), via \(w \to f(\cdot, w)\), as a map from \(\C\) to the vector space of subquadratic harmonic polynomials of \(\C_{\beta}\). We get
	\begin{equation*}
	f(z, w) = f_0 (w) + f_1(w) z + f_{\bar{1}}(w) \bar{z} .
	\end{equation*}
	Since \(\Delta_{\gamma} f =0\), we must have that \(f_0, f_1, f_{\bar{1}}\) are subquadratic harmonic functions of \(\C_{\gamma}\). We conclude that \(f_0 = a + b w + c \bar{w} \) and that \(f_1, f_{\bar{1}}\) are constants.
	
	If \(\Delta_{\beta} f \equiv c \neq 0\), we can assume that \(c=4\). Up to adding  subquadratic harmonic functions, the only function of \(\C_{\beta}\) of subquadratic growth with \(\Delta_{\beta} u =4 \) is \(u = |z|^{2\beta}\). Similarly as before, we write
	\begin{equation*}
	f(z, w) = f_0 (w) + f_1(w) z + f_{\bar{1}}(w) \bar{z} + |z|^{2\beta} .
	\end{equation*}
	Since \(\Delta_{\gamma} f =-4\), we conclude that \(f_1, f_{\bar{1}}\) are constants and \(f_0 = a + b w + c \bar{w} + dw^2 + e\bar{w}^2 - |w|^{2\gamma} \).
\end{proof}


Next, we define spaces of reference functions for balls which are $\epsilon$-close to balls at the apex of a model cone, consisting of suitable approximations to the subquadratic harmonic functions.

\begin{proposition} \label{prop:REFFUNCT}
	For  \(0<\epsilon \ll 1\) and \(B(x, \rho) \subset (\C^2, \omega)\) geodesic ball of radius $\rho$, which is \(\epsilon\)-close to a ball at the apex of a model cone \(C(Y)\in \mathcal{C}\), we can define finite dimensional spaces \(\mH_{\leq 2}^{\epsilon}(B(x, \rho))\) of reference functions on \(B(x, \rho)\)  with the property that
	\(\mH_{\leq 2}^{\epsilon}(B(x, \rho))\) converges to \(\mH_{\leq 2}(C(Y))\) as subspaces in $L^2$ as \(\epsilon \to 0\). 
	Moreover, if \(P \in \mH_{\leq 2}^{\epsilon}\) then \(|\dd P|_{\alpha}\) is bounded.
\end{proposition}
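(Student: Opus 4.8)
The plan is to construct \(\mH_{\leq 2}^\epsilon(B(x,\rho))\) by producing, for each generator of \(\mH_{\leq 2}(C(Y))\) listed in Proposition \ref{prop:HARMPOL}, an explicit reference function on the actual ball, and then to verify the two asserted properties one generator at a time. The guiding principle is that the generators split into two types: the \emph{pluriharmonic} ones (the constant, together with the holomorphic and antiholomorphic linear and quadratic monomials), whose complex Hessian \(\dd P\) vanishes identically on the model cone; and the single \emph{radial} generator \(\beta^{-2}|z|^{2\beta}-\gamma^{-2}|w|^{2\gamma}\), whose Hessian is the fixed combination of cone metrics \(|z|^{2\beta-2}idzd\bar z - |w|^{2\gamma-2}idwd\bar w\). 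I would treat these two types separately, since the Hölder control of \(\dd P\) is automatic for the first type and constitutes the genuine content of the statement for the second.

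For the pluriharmonic generators I would simply use the ambient holomorphic coordinate functions. Under the \(\epsilon\)-approximation identifying \(B(x,\rho)\) with a ball near the apex of \(C(Y)\), each complex coordinate of the model cone corresponds to one of the holomorphic functions \(z\), \(w\), or a defining linear form \(\ell_j\) (suitably recentred at \(x\) and rescaled to unit size); for the smooth model \(\C^2\) one takes the ordinary harmonic polynomials of degree \(\leq 2\). Being pluriharmonic, these functions satisfy \(\dd P \equiv 0\), so \(|\dd P|_\alpha = 0\) and the Hölder bound is trivial. Their \(L^2\)-convergence to the corresponding model harmonics as \(\epsilon \to 0\) follows from the Gromov--Hausdorff convergence of the rescaled balls used in the proof of Proposition \ref{prop:MODELCONES}, together with uniform convergence of the rescaled, recentred coordinate functions on compact subsets away from the singular lines and a uniform \(L^\infty\) bound.

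The heart of the matter is the radial generator. On the actual ball I would define its reference function from the relevant piece of the Kähler potential: for the model cone \(\cone\) this is \(\beta_1^{-2}|z|^{2\beta_1} - \gamma^{-2}|w|^{2\gamma}\); near a line \(L_j\) one replaces the corresponding factor using the local expression \(\phi = e^{S_1}|w-a_2|^{2\beta_2} + S_2\) from Lemma \ref{lem:LOCREGFM}; and in the colliding region the \(\C_\gamma\) factor is modelled on the rescaled two--cone--point potential \(\phi\) of Proposition \ref{propmetriconC}. In each case \(\dd P\) is computed explicitly and is, up to lower order terms, a fixed linear combination of cone forms \(|\ell|^{2\beta-2}id\ell d\bar\ell\), which has constant coefficients in the singular coframe \(\{v_i \wedge \bar v_j\}\) and hence bounded \(C^\alpha\) seminorm; the lower order terms are controlled by the Corollary to Lemma \ref{lem:LOCREGFM} and by Lemma \ref{lem:COLLREG}. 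The convergence to the model harmonic again follows from Proposition \ref{propmetriconC} and Lemma \ref{lem:FMTGCONE}, which give the \(L^2\) (indeed uniform on compact sets) convergence of \(\phi\) and its rescalings to \(|w|^{2\gamma}\).

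The main obstacle I expect is the uniform \(C^\alpha\) control of the radial reference function's Hessian throughout the colliding region, where as the scale varies the model cone passes from \(\cone\) to \(\C \times \C_\gamma\) and then to the individual \(\C \times \C_{\beta_2}\) or \(\C \times \C_{\beta_3}\), i.e. where the two cone points \(a_2, a_3\) of \(\phi\) either merge into a single \(2\pi\gamma\) cone or separate. Two points require care here: first, one must patch the asymptotic expansion \eqref{fmpotential} at infinity with the local expansion \eqref{locregFm} near each \(a_j\), so that the Hessian of the reference function stays \(C^\alpha\)-bounded \emph{uniformly} in the gluing parameter; second, the Hölder seminorm is measured with respect to the coframe adapted to \(\omega\), which differs from the model coframe by the quasi-isometry of Lemma \ref{lem:QUASIISOM}, so one must check that this change of coframe is itself \(C^\alpha\)-close at the appropriate scale, with the errors of Lemma \ref{lem:COLLREG} and Lemma \ref{lem:gluingreg} absorbed into the bound. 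Once these uniform estimates are in place, the two stated properties follow for every model cone in \(\mathcal{C}\).
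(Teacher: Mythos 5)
There is a genuine gap, and it sits exactly where you flagged it yourself: the radial generator. Your plan builds this reference function piecewise from model potentials --- \(\beta_1^{-2}|z|^{2\beta_1}-\gamma^{-2}|w|^{2\gamma}\) in one region, the local expansion \(e^{S_1}|w-a_2|^{2\beta_2}+S_2\) near a line, the rescaled two--cone--point potential \(\phi\) in the colliding region --- and then requires these pieces to be patched so that \(|\dd P|_{\alpha}\) is bounded \emph{uniformly} as the cone points \(a_2, a_3\) merge and as the relevant model cone transitions from \(\cone\) to \(\C\times\C_{\gamma}\) to \(\C\times\C_{\beta_2}\). You state these uniform estimates as the "main obstacle" and then assume them ("once these uniform estimates are in place..."), but they are precisely the content of the proposition: near the collision the local models degenerate, and controlling the H\"older seminorm of \(\dd\) of a glued expression uniformly in the collision parameter is not a routine verification that can be deferred. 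As written, the proposal is a reduction of the proposition to an unproved (and hard) claim, not a proof.

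The paper sidesteps this entirely with one observation you missed: take the radial reference function to be \(2|\tz|^{2\beta}-\tilde{\psi}\), where \(\tilde{\psi}\) is the rescaled K\"ahler potential of the \emph{actual} metric, \(\rho^{-2}\omega|_{B(x,\rho)}=i\dd\tilde{\psi}\). Then \(\dd\tilde{\psi}\) \emph{is} the metric itself, so its coefficients in an orthonormal coframe for \(\omega\) are identically constant and contribute nothing to the H\"older seminorm; no patching of local models and no uniformity in the gluing parameter is ever needed, and the \(L^2\)-convergence to \(|\tz|^{2\beta}-|\tw|^{2\gamma}\) follows from uniform convergence of \(\tilde{\psi}\) to \(|\tz|^{2\beta}+|\tw|^{2\gamma}\). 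What remains is only to bound \(|\dd|\tz|^{2\beta}|_{\alpha}\) for a single cone factor, and here the case analysis is short: whenever the model cone contains a Euclidean factor one chooses that factor, so \(|\tz|^2\) is smooth and the bound follows from the uniform lower bound of \(\omega\) by the Euclidean metric (Lemma \ref{lem:QUASIISOM}); the only model cone without a Euclidean factor is \(\cone\), where one bounds \(|\dd|\tz|^{2\beta_1}|_{\alpha,\omega_H}\) for the Hermitian comparison metric \(\omega_H\) and its dilations, and transfers the bound to \(\omega\) using the polynomial decay of \(\omega_H-\omega\) at the origin. Your treatment of the pluriharmonic generators agrees with the paper and is fine; it is the choice of the radial reference function that makes the difference between a one-page proof and an open-ended estimate.
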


\begin{proof}
	If \(B(x, \rho)\) is \(\epsilon\)-close to a ball in \(C(Y)\in \mathcal{C}\), then the Gromov-Hausdorff distance between \(\rho^{-1}B(x, \rho)\) and \(B_1 \subset C(Y)\) is \(< \epsilon\) as in Proposition \ref{prop:MODELCONES}. We can change coordinates (as in Section \ref{sect:APROXSOL}) and realize both \(\rho^{-2}\omega|_{B(x, \rho)} = i \dd \tilde{\psi}\) and \(\omega_{C(Y)}\) as being defined on \(\{|\tz|^{2\beta}+|\tw|^{2\gamma}<2\}\). We define a basis of \(\mH_{\leq 2}^{\epsilon}(B(x, \rho))\) given by the appropriate real and imaginary parts of \(\tilde z,\tilde w,\tilde w^2,\tilde z^2,\tilde z \tilde w\) according to those which belong to \(\mathcal{H}_{\leq 2}(C(Y))\),  and \(2|\tz|^{2\beta}-\tilde{\psi}\) which will converge uniformly to \(|\tz|^{2\beta}-|\tw|^{2\gamma}\). Indeed the potential \(\tilde{\psi}\) converges uniformly to \(|\tz|^{2\beta}+|\tw|^{2\gamma}\) as $\epsilon \to 0$, and the convergence of the subspace \(\mH_{\leq 2}^{\epsilon}(B(x,\rho))\) to \(\mH_{\leq 2}(C(Y))\) is then clear.
	
	We now prove the bound on \(|\dd P|_{\alpha}\). Note that, since \(\dd\) of the potential is constant (being equal to the metric itself), it is enough to bound \(\dd|z|^{2\beta}\) on each of the model cones. If the model cone contains a Euclidean \(\C\)-factor, then we can take \(\beta=1\). In this case \(|z|^2\) is smooth, its \(\dd\) is bounded with respect to any smooth metric and, since the coefficients of our approximate solution are uniformly bounded from below by the ones of the Euclidean metric, the bound on \(|\dd |z|^2|_{\alpha}\) follows. The only model cone which does not contain a Euclidean factor is \(\cone\). Here, we use the approximate Hermitian metric to bound the H\"older coefficient for \(|\dd |z|^{2\beta_1}|_{\alpha, \omega_H}\) and also for its scalings \(\omega_{H, \lambda}=\lambda^{-2}D_{\lambda}^*\omega_H\). Since \(\omega_{H} - \omega\) has polynomial asymptotic decay at the origin, the result follows.
\end{proof}

 \begin{remark}
 	The reference functions \(\mH_{\leq 2}^{\epsilon}(B(x, \rho))\) in the proof of Proposition \ref{prop:REFFUNCT} are \emph{not} necessarily harmonic. However, we can check that \(\Delta P (x) \to 0\) as \(\epsilon \to 0\). Indeed, it is only necessary to check this for \(|z|^{2\beta}-\psi\). Here we have $\Delta \psi \equiv 2$, and in the limit when \(\epsilon \to 0\) we can replace the Laplacian with that of the Hermitian metric $\omega_H$ for which \(\Delta |z|^{2\beta} \to 2\).
 	
    By subtracting small multiples of the potential,  replacing $P$ with $P - \frac 1 2 (\Delta P(x)) \tilde \psi$, we can assume that \(\Delta P(x) =0\) for all $P \in \mH_{\leq 2}^\epsilon( B(x,\rho))$. 
 \end{remark}
 
 \begin{remark}
	We could have defined the reference functions to be harmonic by taking harmonic approximations of each of the reference functions, but our proof would require that the complex Hessian of these approximations be bounded in \(C^{\alpha}\).
 \end{remark}

We briefly state the spectral decomposition lemma for model cones from \cite[Lemma 4.2]{dBoEdw}.
\begin{lemma}[Spectral decomposition]
	Let $0<\lambda<1$ and $d \geq 0$. For a cone $C(Y)$, let $d_*$ be the smallest indicial root greater than $d$.
	
	If $f$ is harmonic on $B(0,1) \subset C(Y)$ and $L^2$-orthogonal to $\mathcal H_{\leq d}(B(0,1))$, then
	\[
		\| f \|_{B(0,\lambda)} \leq \lambda^{d_*} \| f \|_{B(0,1)}
	\]
	with equality if and only if $f$ is homogeneous of degree $d_*$.
\end{lemma}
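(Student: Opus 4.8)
The plan is to diagonalise the Laplacian on the cone by separation of variables, reducing the estimate to a one-line monotonicity property of the individual homogeneous modes. Each of the six model cones in $\mathcal{C}$ is a real four-dimensional metric cone over a (possibly singular) three-sphere link $Y$, and I would work uniformly across all of them.

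First I would expand $f$ in the orthonormal $L^2(Y)$-basis $\{\psi_j\}$ of eigenfunctions of the Friedrichs Laplacian on the link, writing $f(\rho,y)=\sum_j f_j(\rho)\psi_j(y)$ with $f_j(\rho)=\int_Y f(\rho,\cdot)\psi_j$, where $-\Delta_Y\psi_j=\lambda_j\psi_j$. This uses the discrete-spectrum and orthogonal-basis facts recorded before Proposition \ref{prop:HARMPOL}, together with the quasi-isometry of $Y$ to the round three-sphere, which lets the spectral and elliptic theory of the excerpt apply. Since the Laplacian on the cone is $\partial_\rho^2+(3/\rho)\partial_\rho+\rho^{-2}\Delta_Y$, harmonicity forces each $f_j$ to solve the Euler equation $f_j''+(3/\rho)f_j'-\lambda_j\rho^{-2}f_j=0$, whose solutions are the indicial powers $\rho^{d_j^+}$ and $\rho^{d_j^-}$, the two roots of $d(d+2)=\lambda_j$. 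Because $f$ is a weak harmonic function in $W^{1,2}(B(0,1))$, and because $\mathcal{I}$ is symmetric about $-1$ with $\mathcal{I}\cap(-2,0)=\emptyset$, every negative root satisfies $d_j^-\leq -2<-1$, so $\rho^{d_j^-}\psi_j$ is not in $W^{1,2}$ near the apex. Hence only the nonnegative powers survive: $f_j(\rho)=a_j\rho^{d_j}$ with $d_j:=d_j^+\geq 0$.

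Next I would compute the scale-invariant norms explicitly. Using the volume element $\rho^3\,d\rho\,d\mathrm{vol}_Y$ and the orthonormality of $\{\psi_j\}$,
\[
	\|f\|_{B(0,r)}^2=r^{-4}\int_{B(0,r)}|f|^2=\sum_j \frac{|a_j|^2}{2d_j+4}\,r^{2d_j}.
\]
The same computation for the pairing of $f$ with a homogeneous harmonic $\rho^{d_i}\psi_i\in\mathcal{H}_{\leq d}(B(0,1))$ gives $\langle f,\rho^{d_i}\psi_i\rangle_{L^2(B(0,1))}=a_i/(2d_i+4)$, so the hypothesis $f\perp \mathcal{H}_{\leq d}(B(0,1))$ forces $a_i=0$ whenever $d_i\leq d$. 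Thus the surviving modes all satisfy $d_j\geq d_*$, where $d_*$ is the smallest indicial root exceeding $d$.

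Finally, since $0<\lambda<1$ and $d_j\geq d_*$ we have $\lambda^{2d_j}\leq \lambda^{2d_*}$, whence
\[
	\|f\|_{B(0,\lambda)}^2=\sum_{d_j\geq d_*}\frac{|a_j|^2}{2d_j+4}\,\lambda^{2d_j}\leq \lambda^{2d_*}\sum_{d_j\geq d_*}\frac{|a_j|^2}{2d_j+4}=\lambda^{2d_*}\|f\|_{B(0,1)}^2,
\]
which is the asserted inequality. Equality forces $\lambda^{2d_j}=\lambda^{2d_*}$ for every $j$ with $a_j\neq 0$, i.e. $d_j=d_*$ throughout the expansion, so $f$ is homogeneous of degree $d_*$; conversely such an $f$ plainly achieves equality. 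I expect the only delicate point to be the rigorous justification of the separation of variables on the \emph{singular} link --- the $L^2$-convergence of the eigenfunction expansion, the term-by-term validity of the radial ODE, and the exclusion of the singular modes via the $W^{1,2}$ condition --- but each of these rests on the spectral facts already established for the Friedrichs Laplacian on $Y$, so no genuinely new input is required.
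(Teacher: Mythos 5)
Your proof is correct and follows essentially the same route as the paper: expand $f$ in link eigenfunctions, use orthogonality to $\mathcal H_{\leq d}$ to kill the low modes, and compare the scale-invariant norms term by term via the homogeneity $\lambda^{2d_j}\leq\lambda^{2d_*}$. The only difference is that you spell out details the paper's sketch takes for granted (the radial Euler ODE, the exclusion of the negative indicial powers via the $W^{1,2}$ condition, and the explicit normalization constants), which is a welcome but not substantively different elaboration.
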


\begin{proof}
	The proof is contained in \cite{dBoEdw}, we give a sketch here. 
	
	Let $\{ \phi_i \}$ be an $L^2$-orthonormal basis of eigenfunctions on $Y$, so that $\rho^{d_i} \phi_i$ is a homogeneous harmonic function on $C(Y)$ where $d_i$ is the positive root of the indicial equation
	\[
		d_i(d_i+2) = \lambda_i.
	\]
	The homogeneous harmonic functions form a basis for the space of harmonic functions on $C(Y)$ so that a harmonic function, $f = \sum_{d_i \geq 0} \rho^{d_i} \phi_i$.
	
	If $f$ is orthogonal to $\mH_{\leq d}(B(0,1))$, then $f = \sum_{d_i > d} \rho^{d_i} \phi_i = \sum_{d_i \geq d_*} \rho^{d_i} \phi_i$, and, by homogeneity and $L^2$-orthogonality,
	\begin{align*}
		\| f \|^2_{B(0,\lambda)} & =  \sum_{d_i\geq d_*}  \| \rho^{d_i} \phi_i \|^2_{B(0,\lambda)} \\
		& = \sum_{d_i\geq d_*}  \lambda^{2d_i} \| \rho^{d_i} \phi_i \|^2 _{B(0,1)} \\
		& \leq \lambda^{2 d_*}\| f \|^2_{B(0,1)}
	\end{align*}
	with equality if and only if $f$ is homogeneous of degree $d_*$.
\end{proof}

We use this to derive the following monotonicity result.

\begin{lemma}[\(\epsilon\)-monotonicity]\label{lem:epsilonmon}
	Let $0<\lambda<1$, and \(\alpha>0\) small enough that no model cone in $\mathcal C$ has indicial roots in \((2, 2+\alpha]\). There is \(\epsilon=\epsilon(\alpha, \lambda) > 0\) such that if \(B(x, \rho)\) is \(\epsilon\)-close to a ball at the apex of a model cone,  then for all $f$ $L^2$-orthogonal to \(\mathcal{H}_{\leq 2}^{\epsilon} (B(x, \rho))\) with \(\|\Delta f\|_{B(x, \rho)}<\epsilon\), we have
	\begin{equation*}
	\|f\|_{B(x, \lambda \rho)} \leq \lambda^{2+\alpha} \|f\|_{B(x, \rho)}.
	\end{equation*}
\end{lemma}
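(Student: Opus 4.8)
The plan is to argue by contradiction and compactness, reducing everything to the Spectral decomposition lemma on a fixed model cone. Suppose the statement fails for the given $\lambda$ and $\alpha$. Then there is a sequence $\epsilon_i \to 0$, balls $B(x_i, \rho_i) \subset (\C^2, \omega)$ each $\epsilon_i$-close to a ball at the apex of a model cone, and functions $f_i$ that are $L^2$-orthogonal to $\mH_{\leq 2}^{\epsilon_i}(B(x_i, \rho_i))$ with $\|\Delta f_i\|_{B(x_i, \rho_i)} < \epsilon_i$, yet $\|f_i\|_{B(x_i, \lambda\rho_i)} > \lambda^{2+\alpha}\|f_i\|_{B(x_i, \rho_i)}$. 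Using the scale invariance of all quantities involved, I would normalize $\rho_i = 1$ and $\|f_i\|_{B(x_i, 1)} = 1$. Since $\mathcal{C}$ is finite, after passing to a subsequence all the balls $B(x_i, 1)$ are $\epsilon_i$-close to a single fixed model cone $C(Y) \in \mathcal{C}$.

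Next I would set up the compactness. Realizing the rescaled metrics on a common domain via the coordinates of Proposition \ref{prop:REFFUNCT}, we have $\omega|_{B(x_i,1)} = i\dd \tilde{\psi}_i$ with $\tilde{\psi}_i \to |\tilde z|^{2\beta} + |\tilde w|^{2\gamma}$ uniformly and metrics converging to $\omega_{C(Y)}$, all uniformly quasi-isometric to $\R^4$ by Lemma \ref{lem:QUASIISOM}. Because $\|f_i\|_{B(x_i,1)}=1$ and $\|\Delta f_i\| < \epsilon_i$, the Caccioppoli and Poincar\'e inequalities give uniform $W^{1,2}$ bounds on interior balls, and Rellich compactness yields a subsequence converging strongly in $L^2$ on compact subsets of $B_1$ to a limit $f_\infty$. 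Since the Laplacians converge and $\Delta f_i \to 0$, $f_\infty$ is weakly harmonic on $B_1 \subset C(Y)$. By Proposition \ref{prop:REFFUNCT} the reference subspaces $\mH_{\leq 2}^{\epsilon_i}$ converge in $L^2$ to $\mH_{\leq 2}(C(Y))$, so the orthogonality passes to the limit and $f_\infty \perp \mH_{\leq 2}(C(Y))$. From the normalization I would deduce $\|f_\infty\|_{B(0,1)} \leq 1$ (since $\int_{B_r} f_\infty^2 = \lim_i \int_{B_r} f_i^2 \leq 1$ for every $r<1$, hence also for $r=1$), while the failed inequality, passed to the limit on the compactly contained ball $B_\lambda$, gives $\|f_\infty\|_{B(0,\lambda)} \geq \lambda^{2+\alpha}$.

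Finally I would apply the Spectral decomposition lemma to the harmonic function $f_\infty$ on $C(Y)$: since it is $L^2$-orthogonal to $\mH_{\leq 2}(C(Y))$, and the smallest indicial root $d_*$ greater than $2$ satisfies $d_* > 2+\alpha$ by the hypothesis on $\alpha$, we obtain
\[
\lambda^{2+\alpha} \leq \|f_\infty\|_{B(0,\lambda)} \leq \lambda^{d_*}\|f_\infty\|_{B(0,1)} \leq \lambda^{d_*} < \lambda^{2+\alpha},
\]
a contradiction. The main obstacle I anticipate is precisely the compactness step: transporting the $f_i$ across the Gromov--Hausdorff converging sequence of singular spaces and verifying that both weak harmonicity and the orthogonality to the reference spaces survive in the limit. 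This is where the uniform quasi-isometry with $\R^4$ and the explicit coordinate convergence from Proposition \ref{prop:REFFUNCT} carry the essential weight, since they allow one to treat the $f_i$ as functions on a single fixed domain whose metrics converge to $\omega_{C(Y)}$ and to invoke the standard Euclidean weak-compactness machinery.
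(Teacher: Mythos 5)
Your proof is correct and follows essentially the same route as the paper's: contradiction, normalization, compactness via Caccioppoli/Rellich to extract a harmonic limit on the fixed model cone (the paper cites Ding's lemma for this step, which you instead spell out), convergence of the reference subspaces to pass orthogonality to the limit, and the spectral decomposition lemma to reach the contradiction with the choice of $\alpha$. The only cosmetic difference is that you work with $\|f_\infty\|_{B(0,1)}\leq 1$ rather than $=1$, which still suffices for the final chain of inequalities.
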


\begin{proof}
	We proceed by contradiction. If no \(\epsilon>0\) exists, then we can find a sequence \(\epsilon_k \to 0\) with $B(x, \rho_k)$ $\epsilon_k$-close to a ball at the apex of $C(Y)$ for some fixed model cone $C(Y)$, together with functions \(f_k\) $L^2$-orthogonal to \(\mathcal{H}_{\leq 2}^{\epsilon} (B(x_k, \rho_k))\) with \(\|\Delta f_k\|_{B(x, \rho_k)} < \epsilon_k\), but such that
	\begin{equation*}
	\|f_k\|_{B(x, \lambda \rho_k)} > \lambda^{2+\alpha} \|f_k\|_{B(x, \rho_k)} .
	\end{equation*}
	We can assume that \(\|f_k\|_{B(x, \rho_k)}=1\) for all \(k\).
	
	As $k\to \infty$, $\rho_k^{-1} B(x,\rho_k)$ converges in Gromov-Hausdorff distance to $B(0,1) \subset C(Y)$. By the gradient estimate (Caccioppoli inequality) and Rellich compactness, the \(f_k\) converge IN $L^2$ to a harmonic function \(f\) on \(B(0, 1)\) with $\|f\|_{B(0,1)}=1$, see \cite[Lemma 1.3]{ding}, which is \(L^2\)-orthogonal to \(\mathcal{H}_{\leq 2}(B(0,1))\) and
	\[
		\|f\|_{B(0, \lambda)} \geq \lambda^{2+\alpha} \|f\|_{B(0, 1)}.
	\]
	But, by the spectral decomposition lemma, we conclude 
	\[
		\lambda^{2+\alpha} \|f\|_{B(0, 1)} \leq \|f\|_{B(0, \lambda)} \leq \lambda^{d_*} \|f\|_{B(0, 1)}
	\]
	where $d_*$ is the smallest indicial root for $C(Y)$ greater than $d=2$. This is only possible if $2 \leq d_* \leq 2+\alpha$, which contradicts the choice of $\alpha$.
\end{proof}

\subsection{Preliminary estimates}
We establish a \(C^{1, \alpha}\) bound and use it to prove an interior estimate for the \(L^2\)-norm of \(\dd u\).

\begin{proposition} \label{prop:C1alpha}
	Let \(u \in W^{1,2}\) be such that \(\Delta u = f\) on \(B_4\) with \(f \in C^{\alpha}\). Then, for every \(x \in B_{1/2}\) there is \(\tau \in \Lambda^{1,0}(\C^2)\) such that
	\begin{equation*}
	\| \p u - \tau \|_{B_{\rho}(x)} \leq C \rho^{\alpha}
	\end{equation*}
	for all \(0<\rho<1/2\).
\end{proposition}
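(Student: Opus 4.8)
The plan is to prove this \(C^{1,\alpha}\) bound by a Campanato iteration anchored at the fixed point \(x \in B_{1/2}\), running over the geometric sequence of radii \(\rho_k = \lambda^k\) and using the \(\epsilon\)-monotonicity of Lemma \ref{lem:epsilonmon} as the decay mechanism, with the reference spaces \(\mH_{\leq 2}^{\epsilon}\) of Proposition \ref{prop:REFFUNCT} as approximating subspaces. First I would fix \(\epsilon\) and \(\lambda\) small enough that Lemma \ref{lem:epsilonmon} applies for the given \(\alpha\) (so in particular \(\alpha<1\)), and absorb the inhomogeneity: since \(f \in C^{\alpha}\) and the potential satisfies \(\Delta \tilde{\psi} = 2\), subtracting the appropriate multiple of \(\tilde{\psi}\) --- exactly the normalization \(\Delta P(x) = 0\) built into the reference functions in the Remark following Proposition \ref{prop:REFFUNCT} --- reduces us to the situation \(\|\Delta u\|_{B(x,\rho)} \leq C [f]_{\alpha}\, \rho^{\alpha}\), with the constant part of \(f\) accounted for by the projection. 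By Proposition \ref{prop:MODELCONES}, all but at most \(N = N(\epsilon,\lambda)\) of the balls \(B(x,\rho_k)\) are \(\epsilon\)-close to the apex of a model cone in \(\mathcal{C}\), uniformly in \(x\).

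For each good scale I let \(P_k\) denote the \(L^2(B(x,\rho_k))\)-orthogonal projection of \(u\) onto \(\mH_{\leq 2}^{\epsilon}(B(x,\rho_k))\) and set \(u_k = u - P_k\), which is orthogonal to \(\mH_{\leq 2}^{\epsilon}\). One checks that \(\|\Delta u_k\|_{B(x,\rho_k)}\) is small, being bounded by \(\|f - \Delta P_k\|_{B(x,\rho_k)} \leq C\rho_k^{\alpha} + o_{\epsilon}(1)\) using the \(C^{\alpha}\) control on \(f\) and the fact that \(\Delta P_k(x)\) is (nearly) constant. Lemma \ref{lem:epsilonmon} then yields \(\|u_k\|_{B(x,\lambda\rho_k)} \leq \lambda^{2+\alpha}\|u_k\|_{B(x,\rho_k)}\), and comparing with the minimality of the projection at the next scale --- after transporting \(P_k\) to a near-element of \(\mH_{\leq 2}^{\epsilon}(B(x,\rho_{k+1}))\) --- gives a recursion for the excess \(E_k = \|u - P_k\|_{B(x,\rho_k)}\) of the form \(E_{k+1} \leq (\lambda^{2+\alpha}+o_{\epsilon}(1)) E_k + C[f]_{\alpha}\rho_k^{2+\alpha}\). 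At the at most \(N\) bad scales I use the crude bound \(E_{k+1} \leq C E_k\); since their number is bounded uniformly in \(x\), the accumulated loss is a fixed factor and I still obtain \(E_k \leq C \rho_k^{1+\alpha}\), which is more than enough.

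To extract \(\tau\) I decompose each \(P_k = a_k + L_k + Q_k\) into its constant, linear, and quadratic parts in the model-cone coordinates at \(x\), and set \(\tau_k = \p L_k\), a constant \((1,0)\)-form. From the excess decay, \(|\tau_{k+1} - \tau_k| \lesssim \rho_k^{-1}\|P_{k+1} - P_k\|_{B(x,\rho_{k+1})} \lesssim \rho_k^{\alpha}\), which is summable, so \(\tau_k \to \tau\) with \(|\tau - \tau_k| \leq C\rho_k^{\alpha}\). To pass from \(L^2\) control of \(u\) to control of \(\p u\) I invoke the Caccioppoli (gradient) inequality, giving \(\|\p(u - P_k)\|_{B(x,\rho_k/2)} \lesssim \rho_k^{-1}E_k + \rho_k\|\Delta u_k\| \lesssim \rho_k^{\alpha}\); meanwhile the quadratic remainder obeys \(\|\p Q_k\|_{B(x,\rho_k)} \lesssim |c_k|\rho_k \lesssim \rho_k \leq \rho_k^{\alpha}\), the coefficient \(|c_k|\) being bounded because \(P_k\) is bounded and the reference functions satisfy the uniform \(|\dd P|_{\alpha}\) bound of Proposition \ref{prop:REFFUNCT}. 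Writing \(\p u - \tau = \p(u - P_k) + \p Q_k + (\tau_k - \tau)\) on \(B(x,\rho_k)\) then gives \(\|\p u - \tau\|_{B(x,\rho_k)} \leq C\rho_k^{\alpha}\), and interpolating between consecutive radii upgrades this to all \(0 < \rho < 1/2\).

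The main obstacle I anticipate is the bookkeeping of the reference spaces across scales: the subspaces \(\mH_{\leq 2}^{\epsilon}(B(x,\rho_k))\) are only approximately, not exactly, nested, and the relevant model cone may change as \(k\) increases, so one must verify carefully that the projections \(P_k\) and their linear parts \(\tau_k\) genuinely form convergent sequences rather than drifting. This is precisely where the uniform convergence \(\mH_{\leq 2}^{\epsilon} \to \mH_{\leq 2}\) and the \(C^{\alpha}\) bound on \(\dd P\) from Proposition \ref{prop:REFFUNCT} must be used to make the transport error \(o_{\epsilon}(1)\) above genuinely negligible, and where the at most \(N\) bad scales of Proposition \ref{prop:MODELCONES} have to be absorbed without spoiling the geometric decay.
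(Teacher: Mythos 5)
Your overall architecture (Campanato iteration over the scales \(\lambda^k\), reference spaces from Proposition \ref{prop:REFFUNCT}, a bounded number of bad scales from Proposition \ref{prop:MODELCONES}, Caccioppoli to pass from \(u\) to \(\p u\)) is the same as the paper's, but there is a genuine gap at the central decay step. You apply Lemma \ref{lem:epsilonmon} directly to \(u_k = u - P_k\), justified by the absolute bound \(\|\Delta u_k\|_{B(x,\rho_k)} \leq C\rho_k^{\alpha} + o_{\epsilon}(1)\). The monotonicity lemma cannot be used this way: its hypothesis on \(\|\Delta f\|\) is a smallness condition relative to \(\|f\|\) (in its proof the functions are normalized to \(\|f_k\|=1\); with a merely absolute bound the statement is false, as one sees by scaling any non-harmonic counterexample down until its Laplacian is \(<\epsilon\), since both sides of the conclusion scale linearly). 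Along your own claimed decay \(E_k \lesssim \rho_k^{2+\alpha}\), the ratio \(\|\Delta u_k\|/\|u_k\|\) is of order \(\rho_k^{\alpha}/\rho_k^{2+\alpha} = \rho_k^{-2}\), which blows up, so the hypothesis fails at every sufficiently small scale and the recursion \(E_{k+1} \leq (\lambda^{2+\alpha}+o_{\epsilon}(1))E_k + C[f]_{\alpha}\rho_k^{2+\alpha}\) is unproved. The inhomogeneous term you wrote down has the correct size, but producing it requires an intermediate harmonic replacement: one must first trade the (suitably rescaled, norm-one) function for a genuinely harmonic \(h\) via the harmonic approximation Lemma \ref{lem:HARMAPROX}, with error controlled by the smallness \(\delta\) of the rescaled Laplacian, and only then project \(h\) onto the reference space and apply monotonicity to the orthogonal remainder, whose Laplacian is \(o_{\epsilon}(1)\). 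This is exactly the content of the paper's one step improvement \(\dagger\), and it is the step your proposal omits.

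Two further differences make the paper's route lighter. First, it runs the iteration subtractively, \(u_k = u_{k-1} - P_k\) with \(P_k\) produced by \(\dagger\) applied to \(\lambda^{-(k-1)(1+\alpha)}u_{k-1}\), and sets \(\tau = \sum_k \lambda^{k(1+\alpha)}\p P_k(x)\); each \(P_k\) lives only in its own scale's space, so no transport of projections between \(\mH^{\epsilon}_{\leq 2}(B(x,\rho_k))\) and \(\mH^{\epsilon}_{\leq 2}(B(x,\rho_{k+1}))\) is ever needed --- the ``main obstacle'' you flag simply never arises. Second, for this \(C^{1,\alpha}\) statement the paper projects onto \(\mH^{\epsilon}_{\leq 1}\) rather than \(\mH^{\epsilon}_{\leq 2}\): this needs only the weaker spectral assumption that no model cone has indicial roots in \((1,1+\alpha]\), and has the bonus that the degree \(\leq 1\) reference functions are constants and real and imaginary parts of the holomorphic coordinates, hence exactly harmonic for any K\"ahler metric; consequently \(\Delta u_k = f\) for all \(k\) and the Laplacian bookkeeping along the iteration is trivial, whereas your degree-two route forces you to track the non-harmonicity of \(2|\tz|^{2\beta} - \tilde{\psi}\) and to assume the stronger condition that no indicial roots lie in \((2,2+\alpha]\).
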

Here we mean \(\tau=\tau_i \eta_i\) with \(\eta_1, \eta_2\) an orthonormal coframe of \((1,0)\)-forms, and similarly for \(\p u\).
We will be brief because the same line of argument applies later on the \(C^{2, \alpha}\) bound (see Proposition ~\ref{prop:SCHEST}).
\begin{proof}
	We iterate the following one step improvement result:
	\begin{itemize}
		\item[\(\dagger\)] Let \(\alpha>0\) so that there are no indicial roots in the interval \((1, 1+\alpha]\) for all model cones. Then there are \(0 < \lambda < 1\), \(\epsilon>0\) and \(\delta>0\) such that the following holds: Let
		\(\Delta u = f\) on \(B(x, \lambda^k)\) with \(\|u\|_{B_{\lambda^k}(x)} \leq 1\), \(\|f\|_{B_{\lambda^k}(x)} < \delta \) and \(B(x, \lambda^k)\) \(\epsilon\)-close to a ball at the apex of a model cone. Then there is \(P \in \mathcal{H}_{\leq 2}^{\epsilon}(B(x, \lambda^k))\) (indeed \(P \in \mH_{\leq 1}^{\epsilon}\)) such that
		\begin{equation*}
		\|u - P \|_{B(x, \lambda^{k+1})} \leq \lambda^{1+\alpha} .
		\end{equation*}
		Moreover, \(\|P\|_{B(x,\lambda^k)} \leq C\) for some uniform constant \(C\).
	\end{itemize}
	To prove \(\dagger\) we fix \(\epsilon\) as in the monotonicity lemma. We take a harmonic approximation \(\|u-h\|<\mu\) and write \(h= h_{\leq 1} + h_{>1}\) with \(P:= h_{\leq 1} \in \mathcal{H}_{\leq 1}^{\epsilon}\) the \(L^2\)-projection. Write \(\|u-P\|<\|u-h\|+\|h_{>1}\|\) and adjust \(\mu\), \(\lambda\) to get the estimate.
	
	Now, fix \(\epsilon>0\), \(\delta>0\) and \(0<\lambda<1\) as in \(\dagger\). We can clearly assume that \(\|u\|_B \leq 1\) and \(|f|_{C^{\alpha}(x)} < \delta\). We have a controlled number \(N(\epsilon, \lambda)\) of bad scales, in the sense that \(B(x, \lambda^k)\) is \(\epsilon\)-close to a ball on a model cone for all \(k\) except at most \(N\). Set \(u_0=u\) and \(u_k = u_{k-1} -P_k\) with \(P_k\) given by \(\dagger\) applied to \(\lambda^{-(k-1)(1+\alpha)}u_{k-1}\) if \(\lambda^k\) is a good scale and \(P_k=0\) otherwise. This way \(\|u_k\|_{B(x, \lambda^k)} \leq C \lambda^{(1+\alpha)k}\). Set \(\tau_k= \lambda^{k(1+\alpha)} \p P_k(x)\) and \(\tau= \sum_k \tau_k\). Use the scaled Caccioppoli inequality
	\[\| \p u_k \|_{B(x, \lambda^{k+1})} \leq C (\lambda^k\|\Delta u_k\|_{B(x, \lambda^k)} + \lambda^{-k}\|u_k\|_{B(x, \lambda^k)})\] 
	to deduce
	\[\|\p u-\tau\|_{B(x, \lambda^k)}\leq C \lambda^{k\alpha}\]
	for all \(k\). 
\end{proof}

Next, we want a bound on \(\|\dd u\|_{L^2}\). A standard argument to bound the \(L^2\)-norm of the Hessian is to use Bochner formula and integrate by parts. However, we do not have control on the Ricci curvature of the approximate solution, only on its Ricci potential. Fortunately, the Bochner formula on a K\"ahler manifold decomposes into two parts and the part corresponding to \(|\dd u|^2\) does not involve Ricci; see \cite[Proposition 1]{Liu}.

\begin{proposition}
	Assume \(u\) is in \(W^{1,2}\) and \(\Delta u=f \in C^{\alpha}\), then
	\begin{equation}\label{eq:L2bound}
		\| \dd u \|_{B(x, \rho/2)} \leq C \left(\rho^{-2}\|u\|_{B(x, \rho)} + \|f\|_{B(x, \rho)}\right) .
	\end{equation}
\end{proposition}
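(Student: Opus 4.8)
The plan is to run the classical Bochner-plus-integration-by-parts argument for the $L^2$ bound on the Hessian, but to organize it so that no control on the Ricci curvature is ever needed, since we have none. Throughout I work on the regular part $M^{\circ} = \C^2 \setminus (L_1 \cup L_2 \cup L_3)$, where $\omega$ is a smooth K\"ahler metric and, because $f \in C^{\alpha}$, interior elliptic regularity makes $u$ smooth. Fix a cutoff $\eta$ with $\eta \equiv 1$ on $B(x, \rho/2)$, $\mathrm{supp}\, \eta \subset B(x, 3\rho/4)$, $|\p \eta| \leq C\rho^{-1}$ and $|\Delta \eta| \leq C \rho^{-2}$.

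The key point, following \cite{Liu}, is that on a K\"ahler manifold the Bochner formula splits the full Hessian as $|\nabla^2 u|^2 = 2|\dd u|^2 + 2|\p\p u|^2$ into a $(1,1)$-piece and a $(2,0)$-piece, and the curvature enters the two pieces with opposite signs, so that the combination isolating $|\dd u|^2$ is Ricci-free. Concretely, the standard (real) Bochner formula, integrated against $\eta^2$, gives
\[
\int_{M^{\circ}} \eta^2 |\nabla^2 u|^2 = \int_{M^{\circ}} \eta^2 f^2 - \int_{M^{\circ}} \eta^2 \mathrm{Ric}(\nabla u, \nabla u) + \mathcal{E}_1 ,
\]
where $\mathcal{E}_1$ collects terms supported in the annulus $\{\p\eta \neq 0\}$ in which $\Delta\eta^2$ or $\p\eta^2$ is paired with $|\p u|^2$ or $f\, \p u$. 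On the other hand, commuting covariant derivatives on the K\"ahler manifold (where the dangerous holomorphic--holomorphic commutators vanish) yields the companion identity
\[
\int_{M^{\circ}} \eta^2 |\dd u|^2 = \int_{M^{\circ}} \eta^2 |\p\p u|^2 + \int_{M^{\circ}} \eta^2 \mathrm{Ric}(\p u, \p u) + \mathcal{E}_2 ,
\]
with $\mathcal{E}_2$ of the same error type. Since $\mathrm{Ric}(\nabla u, \nabla u) = 2\,\mathrm{Ric}(\p u, \p u)$, adding twice the second identity to the first cancels both the Ricci terms and the $|\p\p u|^2$ contribution, leaving the Ricci-free identity
\[
\int_{M^{\circ}} \eta^2 |\dd u|^2 = \tfrac14 \int_{M^{\circ}} \eta^2 f^2 + \mathcal{E} ,
\]
where $\mathcal{E}$ is again supported in the annulus and depends only on $\p\eta,\Delta\eta$ paired with $\p u, \dd u, f$.

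From here the estimate is routine. The main term satisfies $\tfrac14 \int \eta^2 f^2 \leq C\rho^4 \|f\|_{B(x,\rho)}^2$. In $\mathcal{E}$ the factor $\p\eta^2 = 2\eta\, \p\eta$ is supported in $B(x,3\rho/4)\setminus B(x,\rho/2)$ with $|\p\eta| \leq C\rho^{-1}$ and $|\Delta\eta| \le C\rho^{-2}$; applying Young's inequality to the terms containing $\dd u$ absorbs $\tfrac12 \int \eta^2 |\dd u|^2$ into the left-hand side and leaves a remainder bounded by $C\rho^{-2}\int_{B(x,3\rho/4)} |\p u|^2 + C\rho^4\|f\|_{B(x,\rho)}^2$. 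The gradient term is finally disposed of by the Caccioppoli inequality, which gives $\|\nabla u\|_{B(x,3\rho/4)} \leq C(\rho^{-1}\|u\|_{B(x,\rho)} + \rho \|f\|_{B(x,\rho)})$. Substituting and rewriting everything in the scale-invariant norms yields $\|\dd u\|_{B(x,\rho/2)}^2 \leq C(\rho^{-2}\|u\|_{B(x,\rho)} + \|f\|_{B(x,\rho)})^2$, which is \eqref{eq:L2bound} after taking square roots.

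Two issues require care, and the second is the main obstacle. First, \eqref{eq:L2bound} is in part a regularity statement — that the distributional $\dd u$ of a merely $W^{1,2}$ weak solution lies in $L^2$ — so I would first establish the bound for smooth solutions (or, to avoid assuming third derivatives exist, run the Bochner computation through difference quotients to obtain $W^{2,2}$ control) and then extend to weak solutions by approximation. Second, every integration by parts above takes place on the singular space $(\C^2, \omega)$, so I must ensure the conical lines contribute no boundary terms. The plan is to multiply by an auxiliary cutoff $\zeta_{\delta}$ vanishing on $\delta$-tubes about $L_1 \cup L_2 \cup L_3$ and check that the extra terms vanish as $\delta \to 0$: because the lines have real codimension two and, by Lemma \ref{lem:QUASIISOM}, $\omega$ is quasi-isometric to the Euclidean metric, the boundary integrals over the tubes scale like a positive power of $\delta$ against $W^{1,2}$-controlled quantities and drop out in the limit. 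Verifying this insensitivity of the Bochner computation to the singular set is the crux of the argument.
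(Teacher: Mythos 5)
Your strategy is, in spirit, the paper's own: exploit the K\"ahler structure to get a Ricci-free integral identity for \(\int |\dd u|^2\), cut off near the lines, and clean up with Young and Caccioppoli. In fact, adding twice your companion identity to the real Bochner identity is just a roundabout derivation of what the paper obtains in one stroke from the pointwise, purely algebraic (complex dimension two) identity \(\dd u \wedge \dd u = \left( |\dd u|^2 - (\Delta u)^2 \right)\omega^2\) together with Stokes' theorem, so that Ricci never enters at all. The outer cutoff \(\eta\), the main term, and the Caccioppoli absorption are all fine. The problem is that the step you explicitly defer as ``the crux'' --- insensitivity to the singular set --- is where essentially all the content of this proposition lies, and the argument you sketch for it does not work.

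Three concrete points. (a) With only \(W^{1,2}\) control the tube terms do not vanish: a cutoff \(\zeta_\delta\) for a \(\delta\)-tube around a real codimension-two set has \(|\nabla\zeta_\delta| \sim \delta^{-1}\) on a set of volume \(\sim \delta^2\), so a term like \(\int |\nabla\zeta_\delta|\,|\nabla u|^2\) is only bounded by \(\delta^{-1}\int_{\mathrm{tube}}|\nabla u|^2\), and absolute continuity of the integral gives no rate with which to beat \(\delta^{-1}\). What the paper actually uses here is the \emph{uniform} gradient bound of Proposition \ref{prop:C1alpha}, proved beforehand by Campanato iteration precisely for this purpose; your proposal never invokes it, and quasi-isometry to \(\R^4\) is no substitute. (b) Even granting a bounded gradient, the terms where two derivatives land on the cutoff satisfy only \(\int|\Delta\zeta_\delta| \sim \delta^{-2}\cdot\delta^2 = O(1)\) for the naive tube cutoff, so they do \emph{not} ``scale like a positive power of \(\delta\)''; one needs the refined codimension-two cutoffs with \(\int|\Delta\chi_\epsilon| \to 0\) of \cite[Lemma 5.3]{BKMR}, together with the paper's rank-one observation to convert this into \(\int|\dd\chi_\epsilon| \to 0\). (c) Your description of \(\mathcal{E}_2\) is not correct: cutting off the companion identity and integrating by parts produces a term of the schematic form \(\int \nabla\zeta_\delta \cdot \p u \cdot \dd u\), i.e.\ the very complex Hessian you are trying to estimate appears in the error, for which you have no a priori \(L^2\) bound near the lines --- as organized, the argument is circular. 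The paper avoids this by integrating the exact four-form \(\dd u \wedge \dd u\) by parts \emph{twice} (using that any \((3,1)\)-form vanishes in complex dimension two), so that all derivatives land on the cutoff and the error is \(\int \dd\chi_\epsilon \wedge \p u \wedge \bar{\p} u\), which involves only first derivatives of \(u\) and is then killed by (a) and (b). Once you add these three ingredients --- Proposition \ref{prop:C1alpha}, the refined cutoffs, and the rearrangement keeping second derivatives of \(u\) out of the error --- your argument closes, but at that point it is the paper's proof.
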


\begin{proof}
	We work on the rescaled unit ball \(B=\rho^{-1}B(x, \rho)\). Without loss of generality we can assume that \(u\) has compact support contained in \(B\).
	In the complement of the conically singular set we have (up to dimensional factors)
	\[ \dd u \wedge \dd u = \left( |\dd u|^2 - (\Delta u)^2 \right) \omega^2 .\]
	Let \(\chi_{\epsilon}\) be a cut-off equal to \(1\) outside the \(\epsilon\)-tube around the lines and vanishing on the singular set. We multiply by \(\chi_{\epsilon}\) and integrate by parts to get
	\begin{align*}
	\int_B \chi_{\epsilon} \left( |\dd u|^2 - (\Delta u)^2 \right) \omega^2 &= \int_B \dd \chi_{\epsilon} \wedge \p u \wedge \bar{\p} u \\
	& \leq C \int_B |\dd \chi_{\epsilon}| .
	\end{align*}
	In the inequality we have used that the gradient of \(u\) is uniformly bounded, as provided by Proposition \ref{prop:C1alpha}. We take \(\chi_{\epsilon}\) such that \(\int |\dd \chi_{\epsilon}| \to 0\) as \(\epsilon \to 0\). The construction of cut-off functions with \(\int |\Delta \chi_{\epsilon}| \to 0\) as \(\epsilon \to 0\) is standard in the presence of real codimension two singularities, see \cite[Lemma 5.3]{BKMR} - the same construction applies in our setting because we have uniform control on tubular neighbourhoods of the singular set. Since \(\chi_{\epsilon}\) can be taken to depend only in the radial direction transverse to the conical divisor, we may also assume that \(\dd \chi_{\epsilon}\) has rank one almost everywhere. It follows that \(|\Delta \chi_{\epsilon}| \sim |\dd \chi_{\epsilon}|\) so \(\lim_{\epsilon \to 0}\int |\dd \chi_{\epsilon}| =0\) and we are done. 
\end{proof}

\subsection{Main result}

In this section we establish our main estimate, Proposition \ref{prop:SCHEST}. We prove it by a perturbation method, along the lines of \cite{dBoEdw}, which has the advantage of being robust and (contrary to \cite{Donaldson}) does not appeal to symmetry. For a classical reference on this perturbation technique see \cite{caffarelli}. Similarly, we expect that non-linear analogues of the Schauder estimate in Proposition \ref{prop:SCHEST} hold true for the complex Monge-Amp\`ere equation.

Write \(B_r\) for balls centred at the origin in \((\C^2, \omega)\). 
The main result of this section is the following

\begin{proposition} \label{prop:SCHEST}
	Fix \(0 < \alpha < 1/\beta_3 -1\). 
	Let \(u \in W^{1,2}\) be such that \(\Delta u = f\) on \(B_4\) with \(f \in C^{\alpha}\). Then, for every \(x \in B_{1/2}\) there is \(\tau \in \Lambda^{1,1}(\C^2)\) such that
	\begin{equation*}
	\| \dd u - \tau \|_{B_{\rho}(x)} \leq C \rho^{\alpha}
	\end{equation*}
	for every \(0<\rho<1/2\).
\end{proposition}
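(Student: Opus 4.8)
The plan is to run the Campanato iteration exactly as in the proof of Proposition~\ref{prop:C1alpha}, but now approximating by the degree-two reference functions \(\mathcal H_{\leq 2}^{\epsilon}\) and converting \(L^2\)-control of the remainder into Hessian control through the bound \eqref{eq:L2bound} rather than through Caccioppoli. First I normalise: subtracting \(\tfrac12 f(x)\,\tilde{\psi}\) from \(u\) changes \(\dd u\) by the bounded \((1,1)\)-form \(\tfrac12 f(x)\,\omega\), which I absorb into \(\tau\), so I may assume \(f(x)=0\) and hence \(\|f\|_{B(x,\rho)}\leq [f]_{\alpha}\rho^{\alpha}\); after a dilation I also assume \(\|u\|_{B_1}\leq 1\) and \([f]_{\alpha}<\delta\). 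The hypothesis \(0<\alpha<1/\beta_3-1\) enters precisely so that Lemma~\ref{lem:epsilonmon} applies: it keeps the interval \((2,2+\alpha]\) free of indicial roots of every model cone, the binding root being \(1+1/\beta_3\), carried by the holomorphic monomial \(zw\) on \(\C_{\beta_3}\times\C\) (recall \(\beta_1<\gamma<\beta_2\leq\beta_3\), so \(\beta_3\) is the largest angle).

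The engine is a one-step improvement \(\dagger\): there are \(0<\lambda<1\), \(\epsilon>0\), \(\delta>0\) such that whenever \(\Delta u=f\) on \(B(x,\lambda^{k})\) with \(\|u\|_{B_{\lambda^{k}}(x)}\leq 1\), \(\|f\|_{B_{\lambda^{k}}(x)}<\delta\), and \(B(x,\lambda^{k})\) is \(\epsilon\)-close to the apex of a model cone, there is \(P\in\mathcal H_{\leq 2}^{\epsilon}(B(x,\lambda^{k}))\) with \(\Delta P(x)=0\), \(\|P\|\leq C\), and
\[
	\|u-P\|_{B(x,\lambda^{k+1})}\leq \lambda^{2+\alpha}.
\]
To prove \(\dagger\) I fix \(\epsilon\) as in Lemma~\ref{lem:epsilonmon}, let \(P\) be the \(L^2\)-projection of \(u\) onto \(\mathcal H_{\leq 2}^{\epsilon}\) corrected by a multiple of the potential so that \(\Delta P(x)=0\) (the remark after Proposition~\ref{prop:REFFUNCT}), and note that \(u-P\perp\mathcal H_{\leq 2}^{\epsilon}\) with \(\Delta(u-P)=f-\Delta P\). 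Since the reference functions are almost harmonic, \(\|\Delta P\|\to 0\) as \(\epsilon\to 0\); so for \(\delta,\epsilon\) small enough \(\|\Delta(u-P)\|<\epsilon\) and Lemma~\ref{lem:epsilonmon} gives the displayed decay.

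I then iterate. By Proposition~\ref{prop:MODELCONES} all but at most \(N=N(\epsilon,\lambda)\) scales \(\lambda^{k}\) are good; setting \(u_0=u\) and \(u_k=u_{k-1}-P_k\), with \(P_k\) produced by \(\dagger\) applied to \(\lambda^{-(k-1)(2+\alpha)}u_{k-1}\) at good scales and \(P_k=0\) at the (boundedly many) bad ones, I obtain \(\|u_k\|_{B(x,\lambda^{k})}\leq C\lambda^{(2+\alpha)k}\). Feeding this into \eqref{eq:L2bound} yields
\[
	\|\dd u_k\|_{B(x,\lambda^{k+1})}\leq C\bigl(\lambda^{-2k}\|u_k\|_{B(x,\lambda^{k})}+\|\Delta u_k\|_{B(x,\lambda^{k})}\bigr)\leq C\lambda^{k\alpha},
\]
the second term being controlled by \(\Delta u_k(x)=0\) and \(f\in C^{\alpha}\). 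Putting \(\tau=\sum_k \dd P_k(x)\), which converges because \(|\dd P_k(x)|\leq C\lambda^{\alpha k}\), I decompose on \(B(x,\lambda^{k})\)
\[
	\dd u-\tau=\dd u_k+\sum_{j\leq k}\bigl(\dd P_j-\dd P_j(x)\bigr)-\sum_{j>k}\dd P_j(x).
\]
The outer two terms are \(O(\lambda^{k\alpha})\) by the above and by geometric summation; the middle term is handled by the bound on \(|\dd P_j|_{\alpha}\) from Proposition~\ref{prop:REFFUNCT}, the crucial point being that on each model cone the complex Hessian of a subquadratic harmonic is parallel in the singular coframe, so \(|\dd P|_{\alpha}=0\) there and only the small \(\epsilon\)-deviations survive. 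Interpolating across \(\rho\in[\lambda^{k+1},\lambda^{k}]\) gives \(\|\dd u-\tau\|_{B_{\rho}(x)}\leq C\rho^{\alpha}\).

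I expect the main obstacle to be exactly the double demand on the reference functions, which are neither harmonic nor of constant complex Hessian. Running the monotonicity step needs \(\Delta P\) small (so the orthogonal remainder is admissible), while passing to the limit needs \(|\dd P|_{\alpha}\) controlled uniformly over all scales and all six model-cone types (so the telescoped Hessians converge to a genuine \(C^{\alpha}\) tensor \(\tau\)). Both are furnished by Proposition~\ref{prop:REFFUNCT} and the remark after it; the only input beyond the \(C^{1,\alpha}\) argument is that the decay exponent is now \(2+\alpha\), which is why the threshold \(1/\beta_3-1\) rather than \(1/\beta_3\) appears. A minor technical nuisance is that summing the per-scale Hessian deviations can cost a factor linear in \(k\), harmlessly absorbed by running the iteration at a slightly larger exponent \(\alpha'\in(\alpha,1/\beta_3-1)\).
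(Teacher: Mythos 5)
Your proposal is correct and follows the paper's architecture in all essentials: the same Campanato iteration over good/bad scales from Proposition \ref{prop:MODELCONES}, the same reference spaces \(\mathcal H_{\leq 2}^{\epsilon}\) from Proposition \ref{prop:REFFUNCT} with the telescoped Hessians \(\dd P_j(x)\) assembling into \(\tau\), the same use of \eqref{eq:L2bound} (rather than Caccioppoli) to convert the \(L^2\) decay of \(u_k\) into Hessian decay, and the same \(\alpha'>\alpha\) summation trick to control \(\sum_j \Delta P_j\) and \(\sum_j(\dd P_j - \dd P_j(x))\). The one genuine deviation is inside the one-step improvement \(\dagger\) (the paper's Lemma \ref{lem:ONESTEP}): the paper first invokes harmonic approximation (Lemma \ref{lem:HARMAPROX}) to replace \(u\) by a harmonic \(h\) with \(\|u-h\|<\mu\), projects \emph{\(h\)} onto \(\mathcal H_{\leq 2}^{\epsilon}\), applies \(\epsilon\)-monotonicity to the tail \(h_{>2}\), and then tunes \(\mu\) against \(\lambda\); you instead project \(u\) itself and apply Lemma \ref{lem:epsilonmon} directly to \(u-P\), using \(\|\Delta(u-P)\|\leq\|f\|+\|\Delta P\|<\epsilon\). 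Your shortcut is legitimate given the lemma as stated and renders Lemma \ref{lem:HARMAPROX} superfluous, which is a small economy; what the paper's detour buys is that the function fed to the monotonicity lemma is an honest harmonic tail, whose Laplacian comes only from the non-harmonicity of the reference functions and is independent of \(f\), and the extra free parameter \(\mu\) decouples the source smallness \(\delta\) from the cone-closeness \(\epsilon\). One point of care common to both routes: the monotonicity conclusion \(\|g\|_{B(x,\lambda\rho)}\leq\lambda^{2+\alpha}\|g\|_{B(x,\rho)}\) can only be meant in the normalized sense \(\|\Delta g\|\lesssim \epsilon\|g\|\) (an inhomogeneous solution of tiny norm orthogonal to \(\mathcal H^{\epsilon}_{\leq 2}\) cannot decay homogeneously), so when \(\|u-P\|_{B(x,\lambda^k)}\) is already smaller than, say, \(\lambda^{4+\alpha}\), you should get the bound \(\|u-P\|_{B(x,\lambda^{k+1})}\leq\lambda^{2+\alpha}\) trivially from \(\|\cdot\|_{B_{\lambda\rho}}\leq\lambda^{-2}\|\cdot\|_{B_{\rho}}\) rather than from the lemma; the paper's version needs the identical dichotomy since \(\Delta h_{>2}=-\Delta h_{\leq 2}\neq 0\), so the two arguments sit at the same level of rigor. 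Finally, your identification of the binding indicial root \(1+1/\beta_3\), carried by \(zw\) on \(\C\times\C_{\beta_3}\) (using \(\beta_1<\gamma<\beta_2\leq\beta_3\) and \(\beta_3>1/2\)), is correct and supplies the explanation for the hypothesis \(\alpha<1/\beta_3-1\) that the paper leaves implicit.
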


Same as before, in the integral estimate we mean \(\tau=\tau_{i\bar{j}} \eta_i \bar{\eta}_j\) with \(\eta_1, \eta_2\) an orthonormal coframe of \((1,0)\)-forms for \((\C^2, \omega)\), and similarly for \(\p u\).
The key for proving Proposition \ref{prop:SCHEST} is the following one step improvement lemma.

\begin{lemma} \label{lem:ONESTEP}
	Let \(0 < \alpha < 1/\beta_3 -1\). There are \(0 < \lambda < 1\), \(\epsilon>0\) and \(\delta>0\) such that the following holds: Let
	\(\Delta u = f\) on \(B(x, \lambda^k)\) with \(\|u\|_{B_{\lambda^k}(x)} \leq 1\), \(\|f\|_{B_{\lambda^k}(x)} < \delta \) and \(B(x, \lambda^k)\) \(\epsilon\)-close to a ball at the apex of a model cone. Then there is \(P \in \mathcal{H}_{\leq 2}^{\epsilon}(B(x, \lambda^k))\) such that
	\begin{equation*}
	\|u - P \|_{B(x, \lambda^{k+1})} \leq \lambda^{2+\alpha} .
	\end{equation*}
	Moreover, \(\|P\|_{B(x,\lambda^k)} \leq C\) for some uniform constant \(C\).
\end{lemma}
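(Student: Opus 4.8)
The plan is to run, at second order, the same one-step improvement scheme used for $\dagger$ in the proof of Proposition \ref{prop:C1alpha}, with the $\epsilon$-monotonicity Lemma \ref{lem:epsilonmon} now playing the role of the spectral gap. Two preliminary choices are needed. First I would record that, since the cone angles satisfy \eqref{anglecond}, we have $\beta_2+\beta_3>1$ and hence $\beta_3>1/2$; consequently $1+1/\beta_3$ is the smallest indicial root exceeding $2$ on $\C_{\beta_3}\times\C$, and the hypothesis $\alpha<1/\beta_3-1$ is precisely the statement $2+\alpha<1+1/\beta_3$. Using the description of $\mH_{\leq 2}$ in Proposition \ref{prop:HARMPOL} one checks the analogous gap for each of the remaining five model cones, so that $\alpha$ meets the indicial-root hypothesis of Lemma \ref{lem:epsilonmon}. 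I then fix $\lambda$, let $\epsilon=\epsilon(\alpha,\lambda)$ be as in that lemma, and shrink $\epsilon$ and choose $\delta$ below. Since the norms are scale invariant and Lemma \ref{lem:epsilonmon} is stated for an arbitrary radius, I apply everything directly on $B(x,\lambda^k)$ with no rescaling.

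Next I would take $P$ to be the $L^2(B(x,\lambda^k))$-orthogonal projection of $u$ onto the reference space $\mathcal{H}_{\leq 2}^{\epsilon}(B(x,\lambda^k))$, normalized as in the remark following Proposition \ref{prop:REFFUNCT} so that $\Delta Q(x)=0$ for every $Q$ in the space. Being an orthogonal projection it is norm-decreasing, so $\|P\|_{B(x,\lambda^k)}\leq\|u\|_{B(x,\lambda^k)}\leq 1$, which already gives the asserted bound $\|P\|\leq C$; moreover $g:=u-P$ is $L^2$-orthogonal to $\mathcal{H}_{\leq 2}^{\epsilon}$ with $\|g\|_{B(x,\lambda^k)}\leq\|u\|_{B(x,\lambda^k)}\leq 1$ by the Pythagorean identity. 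The crucial point is to control $\Delta g=f-\Delta P$. Here I would exploit that the generators of $\mathcal{H}_{\leq 2}^{\epsilon}$ built from $\tilde z,\tilde w,\tilde z^2,\tilde w^2,\tilde z\tilde w$ are real or imaginary parts of holomorphic functions, hence pluriharmonic and therefore \emph{exactly} $\omega$-harmonic; the only generator with nonzero Laplacian is $2|\tilde z|^{2\beta}-\tilde\psi$, whose Laplacian tends to $0$ as $\epsilon\to 0$ by the remark following Proposition \ref{prop:REFFUNCT} (one has $\Delta\tilde\psi\equiv 2$ while $\Delta(2|\tilde z|^{2\beta})$ converges to the matching constant). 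Since $\|P\|\leq 1$ bounds the coefficients in the fixed finite-dimensional space, this forces $\|\Delta P\|_{B(x,\lambda^k)}$ to be small for $\epsilon$ small, and with $\delta$ small I arrange $\|\Delta g\|_{B(x,\lambda^k)}\leq\|f\|_{B(x,\lambda^k)}+\|\Delta P\|_{B(x,\lambda^k)}<\epsilon$.

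With these bounds $g$ meets the hypotheses of Lemma \ref{lem:epsilonmon} on $B(x,\lambda^k)$, which gives
\[
	\|u-P\|_{B(x,\lambda^{k+1})}=\|g\|_{B(x,\lambda^{k+1})}\leq\lambda^{2+\alpha}\,\|g\|_{B(x,\lambda^k)}\leq\lambda^{2+\alpha},
\]
completing the estimate. The step I expect to be the main obstacle is the control of $\Delta P$: I must ensure $\|\Delta P\|$ is small in $L^2$ \emph{over the whole ball}, and uniformly in the scale $\lambda^k$ and the centre $x$, rather than merely at the single point $x$ where it vanishes by normalization. This is exactly where the nontrivial inputs of Proposition \ref{prop:REFFUNCT} are used, namely the uniform convergence of the potentials $\tilde\psi$ together with the uniform bound on $|\dd P|_{\alpha}$, which upgrade the pointwise identity $\Delta P(x)=0$ to smallness of $\Delta P$ throughout $B(x,\lambda^k)$; the delicate case is the model cone $\cone$, which has no Euclidean factor and must be treated through the Hermitian comparison metric $\omega_H$ precisely as in the proof of Proposition \ref{prop:REFFUNCT}.
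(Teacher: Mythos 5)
Your proof takes a genuinely more direct route than the paper's. The paper first invokes the harmonic approximation Lemma \ref{lem:HARMAPROX} to produce a harmonic \(h\) with \(\|u-h\|_{B(x,\lambda^k)}<\mu\), projects \emph{h} onto \(\mathcal H_{\leq 2}^{\epsilon}\), applies \(\epsilon\)-monotonicity to \(h_{>2}\) with an auxiliary exponent \(\alpha'>\alpha\), and recombines by the triangle inequality, choosing \(\lambda\) so that \(C_2\lambda^{2+\alpha'}<\lambda^{2+\alpha}/2\) and then \(\mu\) (hence \(\delta\)) small. You instead project \(u\) itself and feed \(g=u-P\) directly into Lemma \ref{lem:epsilonmon}, exploiting that its hypothesis tolerates a small nonzero Laplacian so the source term \(f\) (with \(\delta<\epsilon/2\)) can be absorbed there; this removes the harmonic approximation, the auxiliary exponent, and the constant-chasing, and the bounds \(\|P\|\leq\|u\|\leq 1\) and \(\|g\|\leq 1\) come for free from orthogonality. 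This is a legitimate simplification, and it is no worse off than the paper on the one point both arguments lean on: each needs \(\|\Delta P\|_{B(x,\lambda^k)}\) to be small for reference functions with \(\|P\|\leq C\) (in the paper this is needed because \(\Delta h_{>2}=-\Delta h_{\leq 2}\), a hypothesis of Lemma \ref{lem:epsilonmon} that the paper verifies nowhere). Your observation that five of the six generators are pluriharmonic, hence exactly \(\omega\)-harmonic, correctly isolates the issue in the single generator \(2|\tilde z|^{2\beta}-\tilde\psi\).

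However, the mechanism you propose for that last step does not work. A uniform bound on \(|\dd P|_{\alpha}\) together with \(\Delta P(x)=0\) yields only \(|\Delta P(y)|\leq C\,d(x,y)^{\alpha}\), which on the very ball where \(P\) is defined (unit size after rescaling) gives boundedness, not smallness: the H\"older seminorm is bounded, not small, so nothing forces \(\|\Delta P\|_{B(x,\lambda^k)}<\epsilon/2\). This ``vanishing at the centre plus \(C^{\alpha'}\) bound'' device is exactly what the paper uses in the proof of Proposition \ref{prop:SCHEST}, but there it is applied to \(P_j\) on the much smaller balls \(B(x,\lambda^k)\) with \(k\geq j\), where \(d(x,\cdot)^{\alpha'}\lesssim\lambda^{(k-j)\alpha'}\) is genuinely small; within the ball on which \(P\) was constructed it gives nothing. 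The actual source of smallness is the structure of \(\omega\) on the good balls of Proposition \ref{prop:MODELCONES}: there the rescaled metric is either exactly \(\omega_{\cone}\) or close to the comparison metric \(\omega_H\) (e.g. via \eqref{eq:COMPHERM}), and \(\mathrm{tr}_{\omega_H}\bigl(i\dd|z|^{2\beta_1}\bigr)\equiv 1\) holds identically because \(\omega_H\) is diagonal with \(i\dd|z|^{2\beta_1}\) as its first entry, so \(\Delta(2|\tilde z|^{2\beta})\to 2\) in \(L^2\) over the whole ball as the closeness parameter shrinks. Your architecture is sound, but this justification should be routed through the metric comparison rather than through the H\"older bound on \(\dd P\).
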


\begin{proof}
	Let \(\epsilon\) be chosen so that the \(\epsilon\)-monotonicity Lemma \ref{lem:epsilonmon} applies with \(0 < \alpha' < 1/\beta_3 -1\) for some \(\alpha' > \alpha\) and some \(0 < \lambda< 1\) to be determined later. Take \(\mu>0\) to be fixed later and let
	\(\delta>0\) small so that the harmonic approximation Lemma \ref{lem:HARMAPROX} applies to give
	 \(h\) harmonic on \(B(x, \lambda^k)\) with \(\|u-h\|_{B(x, \lambda^k)} < \mu\). We can further assume \(\|h\|_{B(x, \lambda^k)} \leq 2\). Write \(h = h_{\leq 2} + h_{>2}\) with \(P = h_{\leq 2} \in \mathcal{H}_{\leq 2}^{(\epsilon)}(B(x, \lambda^k))\) the \(L^2\)-orthogonal projection. We get that
	\begin{align*}
	\|u - P\|_{B(x, \lambda^{k+1})} &\leq \|u-h\|_{B(x, \lambda^{k+1})} + \|h_{>2}\|_{B(x, \lambda^{k+1})}  \\
	&\leq C\lambda^{-4}\|u-h\|_{B(x, \lambda^k)} + \lambda^{2+\alpha'} \|h_{>2}\|_{B(x, \lambda^k)} \\
	&\leq C_1 \lambda^{-4} \mu + C_2 \lambda^{2+\alpha'}  .
	\end{align*}
	We take \(0<\lambda<1\) such that \(C_2 \lambda^{2+\alpha'} < \lambda^{2+\alpha}/2\) and \(\mu\) small so that \(C_1 \lambda^{-4} \mu < \lambda^{2+\alpha}/2\); therefore \(\|u - P\|_{B(x, \lambda^{k+1})} < \lambda^{2+\alpha}\).
\end{proof}

We proceed with the proof of Proposition \ref{prop:SCHEST}.

\begin{proof}
	Take \(\lambda, \epsilon, \delta\) as in Lemma \ref{lem:ONESTEP}.
	Dividing by \( 1 + \|u\|_{B_4} + \frac{1}{\delta}\|f\|_{C^{\alpha}}\), we can assume that \(\|u\|_{B_4} < 1\) and \(\|f\|_{C^{\alpha}} < \delta\). Fixing $x \in B_{1/2}$ and subtracting \(f(x)\psi\) from \(u\), we reduce to \(f(x)=0\). Here we recall that \(\psi\) is the potential for the approximate solution \(\omega=i\dd\psi\).

	Let \(u_0 = u\). We set \(u_k = u_{k-1}- \lambda^{(2+\alpha)(k-1)}P_k\) with \(P_k\) given by Lemma \ref{lem:ONESTEP} applied to \(\lambda^{-(2+\alpha)(k-1)}u_{k-1}\) for good scales \(\lambda^k\) and \(P_k=0\) otherwise. This way
	\begin{equation*}
	\|u_k\|_{B(x, \lambda^k)}\leq C \lambda^{k(2+\alpha)} 	
	\end{equation*}
	for all \(k \geq 0\). Let \(\tau = \sum_j \tau_j\) with \(\tau_j = \lambda^{(2+\alpha)(j-1)} \dd P_j(x)\), so \(|\tau_j| < C \lambda^{j\alpha}\) and
	\begin{equation} \label{eq:MAINEST1}
		\| \dd u - \tau\|_{B(x, \lambda^k)} \leq \|\dd u_k\|_{B(x, \lambda^k)} + \| \sum_{j=1}^{k} \lambda^{(2+\alpha)(j-1)} \dd P_j - \tau\|_{B(x, \lambda^k)} .
	\end{equation}
	We estimate the first term using equation \eqref{eq:L2bound}
	\begin{align*}
		\| \dd u_k \|_{B(x, \lambda^k)} &\leq C \left( \lambda^{-2k}\|u_k\|_{B(x, \lambda^{k-1})} + \|\Delta u_k\|_{B(x, \lambda^{k-1})} \right) \\
		&\leq C \lambda^{k\alpha} .
	\end{align*}
	Indeed, \(\Delta u_k = f - \sum_{j=1}^{k}\lambda^{(2+\alpha)(j-1)} \Delta P_j\). We have uniform control on the \(C^{\alpha'}\) norm of \(\lambda^{(2+\alpha')j}\Delta P_j\) for some fixed \(\alpha'>\alpha\) and  \(\Delta P_j (x)=0\). We conclude
	\begin{align*}
		\sum_{j=1}^{k}\lambda^{(2+\alpha)j} \|\Delta P_j\|_{B(x, \lambda^k)} &= \sum_{j=1}^{k}\lambda^{(\alpha-\alpha')j} \|\lambda^{(2+\alpha')j} \Delta P_j\|_{B(x, \lambda^k)} \\
		&\leq C \sum_{j=1}^{k} \lambda^{(\alpha-\alpha')j} \lambda^{k\alpha'} \\
		&= C \lambda^{k\alpha} \sum_{j=1}^{k} \lambda^{(\alpha'-\alpha)(k-j)} .
	\end{align*}
	
	In the same way, we use the \(C^{\alpha'}\) bound on \(\lambda^{(2+\alpha')j}\dd P_j\) to handle the second term on the r.h.s. of \eqref{eq:MAINEST1} as follows 
	\begin{align*}
		\|\sum_{j=1}^{k} \lambda^{(2+\alpha)(j-1)} \dd P_j - \tau\|_{B(x, \lambda^k)} &\leq \sum_{j=1}^{k} \lambda^{(2+\alpha)(j-1)} \|\dd P_j - \dd P_j(x)\|_{B(x, \lambda^k)} \\ &+  \sum_{j=k+1}^{\infty} |\tau_j| \\
		&\leq C \lambda^{k\alpha} .
	\end{align*}
	We conclude from equation \eqref{eq:MAINEST1} that \(\|\dd u - \tau \|_{B(x, \lambda^k)} \leq C \lambda^{k\alpha}\).
\end{proof}

\section{Perturbation to a Ricci-flat metric} \label{sect:PERTURBATION}

Establishing Proposition \ref{prop:SCHEST} is the essential result needed to use the perturbation method. Still, since the real parts of holomorphic functions are harmonic with respect to any K\"ahler metric, the kernel of the Laplace operator of our approximate solution metric on a ball is infinite dimensional. To reduce to finite dimensions, we compactify.\footnote{An alternative would be to introduce boundary conditions.}
 
\subsection{Fredholm setup}
We have a natural inclusion \(\C^2 \subset \mathbf{CP}^2\). We still denote by \(L_i\) the corresponding complex projective lines.

\begin{lemma} \label{lem:EXT}
	There is a K\"ahler metric on \(\mathbf{CP}^2\) that agrees with the approximate solution in a neighbourhood of the  intersection point of the lines and has standard cone singularities of angle \(2\pi\beta_i\) elsewhere along \(L_i\).
\end{lemma}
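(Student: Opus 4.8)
The plan is to construct the extension by gluing the approximate solution \(\omega\), used on a small ball \(B'\) about the origin, to a reference conical metric on \(\mathbf{CP}^2\) built by the standard recipe. The geometric fact that makes this feasible is that any two distinct lines through the origin meet only at the origin, so the \(L_i \setminus \{0\}\) are pairwise disjoint in \(\mathbf{CP}^2\); away from the origin we therefore only ever see a single conical line at a time and never a crossing, which is exactly the situation handled by Lemma \ref{localCSinglemma}.

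First I would fix the reference metric. Let \(\omega_{FS}\) be the Fubini--Study form and \(h\) a Hermitian metric on \(\mathcal{O}(1)\) with \(i\Theta_h = \omega_{FS}\); each line \(L_i\) is cut out by a holomorphic section \(s_i\) of \(\mathcal{O}(1)\). For a sufficiently large constant \(A>0\) the form
\[
	\omega_{\mathrm{ref}} = A\,\omega_{FS} + \sum_{i} i \dd\, |s_i|_h^{2\beta_i}
\]
is a K\"ahler metric on \(\mathbf{CP}^2\) with standard cone angle \(2\pi\beta_i\) along \(L_i \setminus \{0\}\), by Donaldson's construction \cite{Donaldson}: positivity for large \(A\) holds because the terms \(i\dd |s_i|_h^{2\beta_i}\) are bounded below by a fixed multiple of \(\omega_{FS}\) away from the lines, while near each \(L_i\) the positive transverse contribution dominates. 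On the affine chart \(\C^2\) this metric has the explicit local potential \(\Phi = A \log(1+|z|^2+|w|^2) + \sum_i |s_i|_h^{2\beta_i}\).

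Next I would glue. With \(\psi\) the potential of Proposition \ref{propaproxsol} and a radial cut-off \(\chi=\chi(\rho)\) equal to \(1\) on \(B'\) and to \(0\) outside a slightly larger ball, set
\[
	\omega_{\mathbf{CP}^2} = i \dd\big( \chi\,\psi + (1-\chi)\,\Phi \big).
\]
This agrees with \(\omega\) on \(B'\) and with \(\omega_{\mathrm{ref}}\) outside the larger ball, so it has the asserted cone angles everywhere except possibly in the transition annulus \(\mathcal{A}=\{\text{supp}\, d\chi\}\). Since the only conical lines meeting \(\mathcal{A}\) are the \(L_i\), and these are pairwise disjoint there, near a point of \(L_i \cap \mathcal{A}\) both \(\psi\) and \(\Phi\) have, in suitable coordinates \((z_1,z_2)\) with \(L_i=\{z_1=0\}\), the shape (positive smooth)\(\cdot |z_1|^{2\beta_i} + (\text{smooth})\) guaranteed by Lemmas \ref{lem:LOCREGFM} and \ref{localCSinglemma}. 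The interpolated potential then has the same shape with positive coefficient \(\chi F + (1-\chi)F'\), so Lemma \ref{localCSinglemma} again yields standard cone angle \(2\pi\beta_i\), provided the transverse \((1,1)\)-form stays positive.

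The main obstacle is precisely this positivity inside \(\mathcal{A}\). The difficulty is that near the origin \(\psi \sim \rho^2 = |z|^{2\beta_1}+|w|^{2\gamma}\) is genuinely conical and dominates the smooth part \(A\log(1+\cdots)\) of \(\Phi\), and the leading conical terms of the two potentials do not coincide, so \(\psi-\Phi\) is not small. I would overcome this by splitting \(\mathcal{A}\) into two pieces. Away from every \(L_i\) both potentials are smooth on \(\mathcal{A}\), and after the dilation \(D_\lambda\) normalising \(\mathcal{A}\) to unit scale the cut-off errors \(d\chi\wedge d(\psi-\Phi)\) and \((\psi-\Phi)\,i\dd\chi\) are of lower order than the metric, so positivity follows from that of \(\omega\) and \(\omega_{\mathrm{ref}}\) by taking \(B'\) small (equivalently \(A\) large). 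Near each \(L_i\) the structural description above reduces positivity of the transverse term to a statement about the interpolated smooth coefficients, which are uniformly positive and bounded, together with the uniform lower bound in the \(z_2\)-direction supplied by Lemma \ref{lem:COLLREG} and the quasi-isometry of Lemma \ref{lem:QUASIISOM}. Combining the two pieces produces a global K\"ahler metric with the required properties.
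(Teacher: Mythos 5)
Your reference metric \(\omega_{\mathrm{ref}}\) is fine, and the overall plan (glue the approximate solution to a global Donaldson-type conical metric) is a natural one, but the proof has a genuine gap at exactly the point you flag as ``the main obstacle,'' and your proposed resolution does not close it. The claim that, after rescaling the transition annulus \(\mathcal{A}\) to unit size, the cut-off errors \(d\chi\wedge d(\psi-\Phi)\) and \((\psi-\Phi)\,i\dd\chi\) become ``of lower order than the metric'' is false. On \(\mathcal{A}=\{\rho\sim\lambda\}\) the difference \(\psi-\Phi\) contains the collision term \(\chi_1|w|^{2\gamma}+\chi_2|z|^{2\gamma}\phi(z^{-1}w)\), which on the part of the annulus where \(R\sim\rho\) is of size \(\rho^2\sim\lambda^2\), i.e.\ of exactly the same order as the potentials themselves: the corresponding terms of \(\Phi\) are negligible there, since \(|s_2|_h^{2\beta_2}+|s_3|_h^{2\beta_3}=O(\lambda^{2\beta_2/\gamma})=o(\lambda^2)\) (because \(\beta_2,\beta_3>\gamma\)) and \(A\log(1+|z|^2+|w|^2)=O(\lambda^{2/\gamma})=o(\lambda^2)\). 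Equivalently, \(\lambda^{-2}D_\lambda^*\,\omega_{\mathrm{ref}}\) converges to the degenerate form \(i\dd|z|^{2\beta_1}\), so \(\omega\) and \(\omega_{\mathrm{ref}}\) are \emph{not} uniformly equivalent on \(\mathcal{A}\), no matter how small \(B'\) is taken (and enlarging \(A\) only worsens the mismatch). Concretely, the term \((\psi-\Phi)\,\p_w\p_{\bar w}\chi\) contributes to the \(w\bar{w}\)-component an amount of size \(\lambda^2\cdot\lambda^{-2}|\p_w\rho|^2\sim\lambda^{2-2/\gamma}\), which is comparable to the cone metric's \(w\bar{w}\)-component \(\gamma^2|w|^{2\gamma-2}\sim\lambda^{2-2/\gamma}\) and carries no sign (it involves \(\chi''\)). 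So positivity of \(i\dd\big(\chi\psi+(1-\chi)\Phi\big)\) does not follow; this contradicts your own correct observation, two sentences earlier, that \(\psi-\Phi\) is not small, and no choice of \(B'\) or \(A\) repairs it.

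The paper's proof circumvents precisely this difficulty by never asking the glued conical potential to have positive \(i\dd\) on the gluing region. It establishes only the one-sided uniform bound \(i\dd\big((1-\chi)\psi+\chi\,|\ell_1|_h^{2\beta_1}|\ell_2|_h^{2\beta_2}|\ell_3|_h^{2\beta_3}\big)>-C\eta\), using the convexity inequality \(i\dd u\geq u\,i\dd\log u\) together with the local normal form \(F_1|z_1|^{2\beta_j}+F_2\) near each line, and then defines the metric as \(\eta_0+\delta\, i\dd\big((1-\chi)\psi+\chi\,|\ell_1|_h^{2\beta_1}|\ell_2|_h^{2\beta_2}|\ell_3|_h^{2\beta_3}\big)\), where \(\eta_0\geq0\) is a smooth semipositive form vanishing on \(B_\epsilon\) and equal to a Kähler form \(\eta\) outside \(B_{1/2}\). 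For small \(\delta\), positivity near the origin comes from \(\delta\,i\dd\psi\) alone (there \(\eta_0=0\) but \(\chi=0\)), while elsewhere \(\eta_0=\eta\) dominates the error \(-\delta C\eta\). If you wish to keep your interpolation ansatz, you need an analogous mechanism -- a one-sided bound plus a dominating smooth semipositive form supported away from the origin, or a reference potential matching \(\psi\) to subleading order on \(\mathcal{A}\) -- since, as written, the positivity step fails.
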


\begin{proof}
	Let \(h\) be a smooth Hermitian metric on \( \mathcal{O}(1) \) and \( \ell_j \) be  holomorphic sections of \( \mathcal{O}(1) \) with \( \ell_j^{-1}(0) = L_j \) and let \(\eta\) be a smooth K\"ahler metric on \(\mathbf{CP}^2\). 
	We claim that 
	\begin{equation} \label{eq:EXTLEM0}
	i\dd (|\ell_1|_h^{2\beta_1} |\ell_2|_h^{2\beta_2} |\ell_3|_h^{2\beta_3}) > - C \eta
	\end{equation}	
		for some uniform \(C>0\).
	Indeed this follows from the identity
	\[ i \dd u = u i \dd \log u + u^{-1} i \partial  u \wedge \overline{\partial} u \geq u i \dd \log u  \]
	with \( u = |\ell_1|_{h}^{2\beta_1} |\ell_2|_h^{2\beta_2} |\ell_3|_h^{2\beta_3}\) and noticing that each \(i \dd \log |\ell_j|_h^2\) is a smooth form on the complement of \(L_j\) which admits a smooth extension to \(\mathbf{CP}^2\) (and therefore is bounded).

	Let \(\chi\) be a standard cut-off function with \(\chi = 0 \) on \(B_1\) and \(\chi = 1 \) outside \(B_2\). Write the approximate solution as \(\omega=i\dd\psi\) on \(B_3\). We claim that there is some \(C>0\) such that 
	\begin{equation} \label{eq:EXTLEM}
	i \dd ( (1-\chi) \psi + \chi |\ell_1|_h^{2\beta_1} |\ell_2|_h^{2\beta_2} |\ell_3|_h^{2\beta_3} ) > - C \eta .
	\end{equation}
	Indeed, it follows from \eqref{eq:EXTLEM0} that we only need to check \eqref{eq:EXTLEM} on  \( B_2 \setminus B_1 \). On the other hand at each point \(p\) of \(L_j\) lying on \(B_2 \setminus B_1\) we can find complex coordinates which do not meet the other lines and such that \(L_j = \{z_1 = 0\} \). In such coordinates
	\[(1-\chi) \psi + \chi |\ell_1|_h^{2\beta_1} |\ell_2|_h^{2\beta_2} |\ell_3|_h^{2\beta_3} = F_1 |z_1|^{2\beta_j} + F_2 \]
	 where \(F_1, F_2\) are smooth  functions and \(F_1\) is uniformly bounded below by a positive constant in a neighbourhood of \(p\). The inequality \eqref{eq:EXTLEM} follows from 
	 \begin{align*}
	 	i \dd \left(F_1|z_1|^{2\beta_j}\right) &\geq \left(F_1|z_1|^{2\beta_j}\right) i \dd \log \left(|z_1|^{2\beta_j}F_1\right) \\
	 	&= \left(F_1|z_1|^{2\beta_j}\right) i \dd \log \left(F_1\right) .
	 \end{align*}
	 
	Let \(\eta_0\) be a smooth non-negative \((1, 1)\)-form \(\eta_0 \geq 0\) such that \(\eta_0 = \eta\) on \( \mathbf{CP}^2 \setminus B_{1/2} \) and \(\eta_0 = 0 \) on \(B_{\epsilon}\) for some $\epsilon >0$. (The construction of such \((1,1)\)-forms is standard.)
	For \(\delta>0\) we set 
	\begin{equation} \label{eq:EXTMET}
	\omega = \eta_0 + \delta i \dd \left( (1-\chi) \psi + \chi |\ell_1|_h^{2\beta_1} |\ell_2|_h^{2\beta_2} |\ell_3|_h^{2\beta_3} \right) .
	\end{equation}
	It is then clear from Equation \eqref{eq:EXTLEM} that if \(\delta>0\) is sufficiently small, then \(\omega\) satisfies the requirements of the lemma.
\end{proof}

Consider \((\mathbf{CP}^2, \omega)\) as in Lemma \ref{lem:EXT}. Set
\[ [f]_{C^{\alpha}(x)} = \sup_{0<\rho<1} \rho^{-\alpha}\Big\|f - \fint_{B_\rho(x)} f \Big \|_{B_{\rho}(x)}.\]
Here we use the Campanato criterion of H\"older functions, see \cite[Section 1.1]{ACM} where Campanato and Morrey spaces are used in the context of metric measure spaces. The \(C^{2, \alpha}\) norms are defined by looking at the components of \(\p f\) and \(\dd f\) with respect to an orthonormal coframe. Proposition \ref{prop:SCHEST} gives us the following global result.

\begin{corollary}
	If \(u\) is a weak solution of \(\Delta u = f\) and \(f \in C^{\alpha}\), then \(u \in C^{2, \alpha}\) and 
	\[\|u\|_{C^{2, \alpha}} \leq C_0 (\|f\|_{C^{\alpha}} + \|u\|_{C^0}) .\]
\end{corollary}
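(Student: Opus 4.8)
The plan is to promote the interior Campanato estimates of Propositions~\ref{prop:SCHEST} and~\ref{prop:C1alpha} to a global bound on the compact space $(\mathbf{CP}^2,\omega)$, using Campanato's $L^2$ characterisation of Hölder norms together with the Ahlfors regularity of $\omega$. Recall that the inequality $\|\dd u - \tau\|_{B_\rho(x)} \le C\rho^\alpha$ says precisely that $\tau = \dd u(x)$ in the Lebesgue sense and that the pointwise Hölder seminorm $[\dd u]_{C^\alpha(x)}$ is bounded by $C$; likewise Proposition~\ref{prop:C1alpha} bounds $[\p u]_{C^\alpha(x)}$. Thus it suffices to bound these seminorms, together with the $C^0$ norms of $\p u$ and $\dd u$, uniformly in $x$ by $\|f\|_{C^\alpha} + \|u\|_{C^0}$.

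First I would observe that Proposition~\ref{prop:SCHEST} is stated for balls centred at the origin because that is the worst singular point, where all three lines meet and the full list of model cones $\mathcal C$ is needed. At every other point of $(\mathbf{CP}^2,\omega)$ only simpler local geometry occurs: either a smooth (Euclidean) point, or a single conical line modelled on $\C_{\beta_j}\times\C$. Both appear in $\mathcal C$, so the Campanato iteration of Lemma~\ref{lem:ONESTEP}, run with the reduced list of admissible model cones, produces the same one-step improvement and hence the same interior estimate at any such point, with constants independent of the point. Covering the compact manifold $\mathbf{CP}^2$ by finitely many balls on which these interior estimates hold, one obtains a uniform bound on $[\p u]_{C^\alpha(x)}$ and $[\dd u]_{C^\alpha(x)}$ for all $x$.

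Next I would track the dependence of the constants. Undoing the normalisation in the proof of Proposition~\ref{prop:SCHEST} (dividing by $1 + \|u\|_{B_4} + \delta^{-1}\|f\|_{C^\alpha}$) gives $[\dd u]_{C^\alpha(x)} \le C(\|u\|_{B(x,\,\cdot)} + \|f\|_{C^\alpha})$, and by Ahlfors regularity $\|u\|_{B(x,r)} \le C\|u\|_{C^0}$. For the $C^0$ control of the complex Hessian, I would evaluate the Campanato estimate at a fixed radius $r \sim 1$ to get $|\dd u(x)| \le \|\dd u\|_{B_r(x)} + Cr^\alpha$, and then bound $\|\dd u\|_{B_r(x)}$ by the $L^2$-Hessian estimate~\eqref{eq:L2bound}, yielding $|\dd u(x)| \le C(\|u\|_{C^0} + \|f\|_{C^\alpha})$. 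The same scheme, using Proposition~\ref{prop:C1alpha} in place of Proposition~\ref{prop:SCHEST}, bounds $\|\p u\|_{C^0}$. Collecting the $C^0$ and Hölder-seminorm bounds for $\p u$ and $\dd u$ together with $\|u\|_{C^0}$ gives $\|u\|_{C^{2,\alpha}} \le C_0(\|f\|_{C^\alpha} + \|u\|_{C^0})$.

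The main obstacle I anticipate is the uniformity across the compact manifold: one must check that the interior estimate, established near the origin via the dilation structure of $\cone$, holds at every other point with a single constant, and in particular that the easier model-cone analysis away from the origin really is subsumed by the machinery of Section~\ref{sect:SCHAUDER}. The remaining passage from pointwise Campanato seminorm control to an honest global norm is then routine given Ahlfors regularity.
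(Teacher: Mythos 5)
Your proposal is correct and follows essentially the same route as the paper, which states this corollary as an immediate consequence of Proposition \ref{prop:SCHEST}: the interior Campanato estimates for \(\p u\) and \(i\dd u\) (Propositions \ref{prop:C1alpha} and \ref{prop:SCHEST}), combined with Campanato's \(L^2\) characterization of H\"older norms under Ahlfors regularity and a covering of the compact manifold, yield the global bound. The details you supply --- uniformity of the constants at points away from the origin (where only the simpler model cones \(\C_{\beta_j}\times\C\) and \(\C^2\) occur), the unwinding of the normalisation to track \(\|u\|_{C^0}+\|f\|_{C^\alpha}\), and the \(C^0\) control of \(\p u\) and \(i\dd u\) via the fixed-radius Campanato bound together with \eqref{eq:L2bound} --- are exactly the steps the paper leaves implicit.
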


It follows from the Poincar\'e inequality that if \(\int f = 0\) then there is a  weak solution to \(\Delta u =f\). The solution is unique up to addition of a constant. We conclude that \(\Delta\) is an isomorphism between the  zero average \(C^{2, \alpha}\) and the zero average \(C^{\alpha}\) spaces. The inverse of the Laplacian has bounded norm, as shown by the following

\begin{corollary}
	There is \(C>0\) such that 
	\[ \|u\|_{C^{2, \alpha}} \leq C \|\Delta u\|_{C^{\alpha}} \]
	for every \(u\) such that \(\int u = 0\). (We can also replace the zero average condition  by requiring that \(u(p)=0\) for some fixed point \(p\).)
\end{corollary}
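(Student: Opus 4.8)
The plan is to upgrade the a priori estimate of the previous corollary, \(\|u\|_{C^{2,\alpha}} \leq C_0(\|\Delta u\|_{C^\alpha} + \|u\|_{C^0})\), by removing the lower-order term \(\|u\|_{C^0}\). The mechanism will be a standard compactness-and-contradiction argument that exploits the compactness of \((\mathbf{CP}^2, \omega)\) and the absence of nonconstant harmonic functions on it.

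First I would argue by contradiction, assuming no such \(C\) exists. Then there is a sequence \(u_k\) with \(\int u_k = 0\), normalised so that \(\|u_k\|_{C^{2,\alpha}} = 1\), while \(\|\Delta u_k\|_{C^\alpha} \to 0\). Feeding these into the previous corollary and using the normalisation forces \(\liminf_k \|u_k\|_{C^0} > 0\); more precisely \(\|u_k\|_{C^0} \geq 1/(2C_0)\) for \(k\) large.

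Next I would extract a limit. Since the \(u_k\) have uniformly bounded \(C^{2,\alpha}\) norm on the compact \(\mathbf{CP}^2\), Arzel\`a--Ascoli provides a subsequence converging in \(C^{2,\alpha'}\) for some \(\alpha'<\alpha\) to a limit \(u_\infty\); the compactness of the embedding \(C^{2,\alpha} \hookrightarrow C^{2,\alpha'}\) is legitimate here because the metric is quasi-isometric to the Euclidean one (Lemma \ref{lem:QUASIISOM}), so the H\"older spaces are comparable to the standard ones in the coordinates \(\tilde{\Phi}\). The limit then satisfies \(\int u_\infty = 0\), is weakly harmonic (because \(\Delta u_k \to 0\) in \(C^\alpha\)), and obeys \(\|u_\infty\|_{C^0} \geq 1/(2C_0) > 0\), so \(u_\infty \not\equiv 0\).

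Finally I would close the argument with the Liouville property on the compact manifold. Testing the weak harmonicity of \(u_\infty\) against \(\chi_\epsilon u_\infty\), with \(\chi_\epsilon\) the radial cut-off vanishing on the conical lines and satisfying \(\int|\nabla\chi_\epsilon| \to 0\) from the \(L^2\)-Hessian estimate, and letting \(\epsilon \to 0\) (the error terms vanish since \(u_\infty\) and \(\nabla u_\infty\) are bounded), yields \(\int_{\mathbf{CP}^2}|\nabla u_\infty|^2 = 0\); hence \(u_\infty\) is constant, and \(\int u_\infty = 0\) forces \(u_\infty \equiv 0\), contradicting \(u_\infty \not\equiv 0\). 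The same reasoning applies verbatim with the normalisation \(u_\infty(p)=0\) substituted for the zero-average condition. I expect the only delicate points to be the two facts inherited from the singular setting, namely the compact H\"older embedding and the admissibility of \(u_\infty\) as a test function, but both are routine given the quasi-isometry with \(\mathbf{R}^4\) and the codimension-two cut-off construction already in place.
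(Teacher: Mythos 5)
Your proposal is correct and follows essentially the same route as the paper: a contradiction argument normalising \(\|u_k\|_{C^{2,\alpha}}=1\) with \(\|\Delta u_k\|_{C^\alpha}\to 0\), using the previous corollary to get a uniform \(C^0\) lower bound, extracting a limit, and concluding via harmonic-implies-constant plus the normalisation. The only difference is that you spell out the two steps the paper leaves implicit (the compact H\"older embedding via the quasi-isometry with \(\mathbf{R}^4\), and the Liouville property via the codimension-two cut-off), both of which are carried out correctly.
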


\begin{proof}
	Otherwise we get a sequence \(\|u_k\|_{2, \alpha} =1\), \(\|\Delta u_k\|_{\alpha} \leq 1/k\).  It follows from the interior Schauder estimates that \(\|u_k\|_0 \geq 1/C_0 - \|\Delta u_k\|_{\alpha} \geq 1/(2C_0)\) for \(k\) large, we let \(|u_k(x_k)| \geq 1/(2C_0)\). Up to a subsequence \(u_k \to u_{\infty}\) with \(u_{\infty}\) harmonic, hence constant. Since \(\int u_{\infty}=0\) (or \(u_{\infty}(p)=0\)) we conclude that \(u_{\infty}=0\). But this contradicts \(|u_{\infty}(x_{\infty})| \geq 1/(2 C_0)\).
\end{proof}

\subsection{Implicit function theorem}

For our approximate solution \((\mathbf{CP}^2, \omega)\) we have
\[\omega^2 = e^{-h} \Omega \wedge \bar{\Omega} ,\]
with \(h \in C^{\alpha}\), \(h(0)=0\). Here, we have also chosen some extension of \(\Omega \wedge \bar{\Omega}\) with \(\int \Omega \wedge \bar{\Omega}=\int \omega^2\).
Our goal is to solve, for \(\omega_u := \omega + i \dd u\), the equation
\begin{equation} \label{eq:MA}
	\omega_u^2 = e^{h} \omega^2
\end{equation}
in a neighbourhood of \(0\).
 
Consider the Monge-Amp\`ere operator \(MA(u) = \log (\omega_u^2/\omega^2) \),
\[MA:  \mathcal{U} = \{u \in C_0^{2, \alpha}, \,\ \omega_u >0\} \to \mathcal{V}= \{f \in C^{ \alpha}, {} \int (e^f -1) =0\} . \]
Here, the sub-index \(0\) means zero average; so \(\mathcal{U}\) is an open subset of a Banach vector space and \(\mathcal{V}\) is a Banach manifold i.e. a \(C^1\)-hypersurface on the space of \(C^{\alpha}\) functions.
Clearly, \(MA(0)=0\) and \(D(MA)(0) = \Delta\) with
\begin{align*}
	\Delta : T_0 \mathcal{U} \cong C^{2, \alpha}_0 &\to T_0 \mathcal{V} \cong  C^{\alpha}_0 
\end{align*}
an isomorphism.

\begin{proposition} \label{prop:INVTHM}
	Equation \eqref{eq:MA} admits a solution \(u \in C^{2, \alpha}\). Moreover, by shrinking the neighbourhood in which \eqref{eq:MA} is satisfied, we can ensure that \(\|u\|_{2, \alpha}\) is as small as we please.
\end{proposition}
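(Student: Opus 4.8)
The plan is to solve $MA(u)=h$ by the inverse function theorem for Banach spaces, and then to force the solution to be small by passing to a small scale, where the Ricci potential becomes small in $C^{\alpha}$.

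First I would verify the hypotheses of the inverse function theorem. The map $MA:\mathcal{U}\to\mathcal{V}$ is smooth (its value and derivatives are algebraic expressions in the coefficients of $\omega+i\dd u$ and their ratio with those of $\omega$), with $MA(0)=0$ and $D(MA)(0)=\Delta$. By the two preceding corollaries, $\Delta:C^{2,\alpha}_0\to C^{\alpha}_0$ is an isomorphism with bounded inverse $\|\Delta^{-1}\|\le C$. Hence the inverse function theorem provides $r>0$ and $C'>0$ such that for every $f\in\mathcal{V}$ with $\|f\|_{C^{\alpha}}<r$ there is a unique small $u\in\mathcal{U}$ solving $MA(u)=f$, and moreover $\|u\|_{2,\alpha}\le C'\|f\|_{C^{\alpha}}$.

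Next I would check that the desired right-hand side lies in $\mathcal{V}$. Since $e^{h}\omega^2=\Omega\wedge\bar{\Omega}$ by the definition of $h$, and the extension of $\Omega\wedge\bar{\Omega}$ was chosen with $\int\Omega\wedge\bar{\Omega}=\int\omega^2$, we obtain
\[ \int_{\mathbf{CP}^2}(e^{h}-1)\,\omega^2=\int_{\mathbf{CP}^2}\Omega\wedge\bar{\Omega}-\int_{\mathbf{CP}^2}\omega^2=0, \]
so $h\in\mathcal{V}$ and the equation $MA(u)=h$ is precisely \eqref{eq:MA}. It remains to arrange $\|h\|_{C^{\alpha}}<r$.

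This is where shrinking the neighbourhood enters, and it is the step I expect to require the most care. Rather than solving at unit scale, I would work at a small scale $t$, replacing $\omega$ by the rescaled approximate solution $\omega_t=t^{-2}D_t^*\omega$ (which converges to $\cone$ by Lemma \ref{lem:TGCONE}) and redoing the compactification and Fredholm setup for $\omega_t$. By Proposition \ref{propriccipot} the Ricci potential satisfies $|h|\le C\rho^{\epsilon}$ with $\epsilon>0$, and together with the accompanying derivative estimates this gives that the rescaled Ricci potential $h_t$ obeys $\|h_t\|_{C^{\alpha}}\to 0$ as $t\to 0$, using the scale invariance of the Campanato-type H\"older seminorm. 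Crucially, because the Schauder estimate of Proposition \ref{prop:SCHEST}, and hence the bound $\|\Delta^{-1}\|\le C$, is scale invariant, the constants $r,C'$ above do not degenerate as $t\to 0$. Therefore for $t$ small enough $\|h_t\|_{C^{\alpha}}<r$, and the inverse function theorem yields $u$ with $\|u\|_{2,\alpha}\le C'\|h_t\|_{C^{\alpha}}$ as small as we wish; undoing the dilation gives a solution of \eqref{eq:MA} on the ball $B_t$, that is a Ricci-flat $\omega+i\dd u$ with the prescribed cone angles near the origin. The main obstacle is precisely to show that the \emph{H\"older} norm (not merely the supremum) of the Ricci potential tends to zero under rescaling, and that the compactification can be carried out with constants uniform in $t$, so that a single pair $(r,C')$ works for all small scales.
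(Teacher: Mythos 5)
Your Fredholm setup, the verification that \(h\in\mathcal{V}\), and the identification of the real difficulty (making the right-hand side small in \(C^{\alpha}\)) all match the paper. But the mechanism you propose for achieving smallness --- rescaling by \(D_t\) and redoing the compactification --- does not deliver it, and this is a genuine gap. The rescaled Ricci potential \(h\circ D_t\) is small only on the unit ball, i.e.\ on the part of \(\mathbf{CP}^2\) coming from \(B_t\); on the extension region provided by Lemma \ref{lem:EXT}, which is most of \(\mathbf{CP}^2\), the Ricci potential of the compactified metric is of order one, and this cannot be improved by a better choice of extension: the class \(c_1(\mathbf{CP}^2)-\sum_j(1-\beta_j)[L_j]\) is non-zero (here \(\sum_j(1-\beta_j)<2<3\)), so no metric on \(\mathbf{CP}^2\) with these cone angles is globally Ricci-flat. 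Hence the global right-hand side you would feed into the inverse function theorem never tends to zero, no matter how small \(t\) is. What is missing is precisely the key device of the paper's proof: keep a \emph{single fixed} compactified metric, replace \(h\) by \(\chi_k h\) with \(\chi_k\) supported in \(B_{2/k}\) --- smallness of \(\|\chi_k h\|_{C^{\alpha}}\) comes from \(h(0)=0\) and H\"older continuity, not from dilation --- and then restore the constraint \(\int(e^{f}-1)=0\) defining \(\mathcal{V}\), which the truncation destroys, by adding a bump function \(\chi_{p,k}\) supported in a ball \(B_p\) far from the origin, calibrated so that \(\int_{B_p}e^{\chi_{p,k}}=\vol(B_p)+v_k\) with \(v_k=V-\int e^{\chi_k h}\to 0\). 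Solving \(MA(u_k)=\chi_k h+\chi_{p,k}\) then gives the Calabi--Yau equation exactly on \(B_{1/k}\), where \(\chi_k\equiv 1\), with \(\|u_k\|_{2,\alpha}\) as small as desired. Note that your scheme would need this localization-plus-correction step anyway: a truncated right-hand side is small but leaves \(\mathcal{V}\), while the honest global Ricci potential lies in \(\mathcal{V}\) but is not small.

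There is a second unresolved point, which you yourself flag as ``the main obstacle'' but do not address: your route requires the inverse function theorem threshold \(r\) and the bound \(\|\Delta^{-1}\|\) to hold uniformly in \(t\) for the whole family of compactified rescaled metrics. The paper's corollary giving \(\|u\|_{C^{2,\alpha}}\leq C\|\Delta u\|_{C^{\alpha}}\) is proved by a compactness/contradiction argument for one fixed metric; for your family the geometry degenerates as \(t\to 0\) (the pulled-back lines \(D_t^{-1}(L_2)\), \(D_t^{-1}(L_3)\) collide into a single line of angle \(2\pi\gamma\)), so uniformity of these constants is a nontrivial assertion requiring its own limit analysis. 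The paper's construction sidesteps both issues at once by never rescaling, which is why the cutoff-and-bump argument, rather than dilation, is the correct core of the proof.
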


\begin{proof}
	By the implicit function theorem, we can solve 
	\[\omega_{u}^2 = e^{\tilde{h}}\omega^2  \]
	whenever \(\tilde{h} \in \mathcal{V}\) and \(\|\tilde{h}\|_{\alpha}<\mu_0\) for some fixed \(\mu_0>0\).
	Take a sequence of cut-off functions \(\chi_k\) that are equal to \(1\) in \(B_{1/k}(0)\) and equal to \(0\) outside \(B_{2/k}(0)\). Since \(h(0)=0\), it follows that \(\| \chi_k h \|_{\alpha} \to 0\) as \(k \to \infty\). 
	Write \(V= \int \omega^2 = \int \Omega \wedge \bar{\Omega}\)
	and let 
	\[v_k = V- \int e^{\chi_kh} . \]
	In particular, \(v_k \to 0\) as \(k \to \infty\).
	
	Fix a point \(p \in \mathbf{CP}^2\) far from the origin and take a sequence of bump functions \(\chi_{p,k}\) supported in a ball \(B_p\) centred at \(p\) which does not intersect \(B_3\), say, and which satisfy \(\int_{B_p} e^{\chi_{p,k}} =\vol(B_p)+v_k\). Moreover, we can choose \(\chi_{p,k}\) so that they converge smoothly to zero as \(k \to \infty\). 
	
	We set
	\begin{equation*}
		h_k = \chi_k h + \chi_{p,k} .
	\end{equation*}
	Since the supports of \(\chi_k\) and \(\chi_{p,k}\) do not overlap, we have
	\begin{align*}
		\int_{\mathbf{CP}^2} e^{h_k} 
		& = \int_{\mathbf{CP}^2 \setminus B_p} e^{\chi_k h} + \int_{B_p} e^{\chi_{p,k}} \\
		&= \int_{\mathbf{CP}^2} e^{\chi_k h} - \vol(B_p) + \int_{B_p} e^{\chi_{p,k}} \\
		&= \int_{\mathbf{CP}^2} e^{\chi_k h} + v_k = V .
	\end{align*}
	It follows that \(h_k \in \mathcal{V}\) and \(\|h_k\|_{\alpha} < \mu_0\) once \(k\) is sufficiently large. We can therefore solve \(MA(u_k)=h_k\), so in particular \(MA(u_k)=h\) on \(B_{1/k}\).
\end{proof}

To conclude the proof of our main Theorem \ref{MAINTTHM} we set \(\omega_{CY}=\omega_u\) with \(\omega_u\) as in Proposition \ref{prop:INVTHM}. Since \(u \in C^{2, \alpha}\), it is clear that the tangent cone of \(\omega_{CY}\) at the origin is unique and isometric to \(\cone\).

\section{General Picture} \label{sect:GENPICT}

\subsection{Stability}
We fix three or more complex lines \(L_1, \ldots, L_d\) going through the origin and discuss the expected behaviour of K\"ahler-Einstein metrics as the cone angles vary.

The geometric interpretation of \eqref{anglecond} can be understood as the strict violation of the Troyanov condition for the existence of spherical metrics, i.e. positive constant curvature metrics on $\mathbf{CP}^1$ with prescribed conical singularities:

More precisely, if \(0 < \beta_1 \leq \beta_2 \leq \ldots \leq \beta_d < 1\) satisfy the following two conditions:
\begin{equation} \label{eq:KLT}
		\sum_{j=1}^{d} (1-\beta_j) < 2, \tag{K}
\end{equation}
and
\begin{equation} \label{eq:TROY}
		(1-\beta_1) < \sum_{j=2}^{d} (1-\beta_j), \tag{S}
\end{equation}
then there is a unique spherical metric on \(\mathbf{CP}^1\) with cone angle \(2\pi\beta_j\) at each \(L_j \in \mathbf{CP}^1\) \cite{LuTi92,tr91}. Since we are assuming that \(d \geq 3\), these two conditions are also necessary for the existence of spherical metrics. In classical differential geometric terms, \eqref{eq:KLT} comes from Gauss-Bonnet and \eqref{eq:TROY} is the Troyanov condition.

The two conditions have different flavours and can also be interpreted in terms of algebro-geometric properties of the pair \((\C^2, \sum_{j=1}^{d}(1-\beta_j)L_j)\): Equation \eqref{eq:KLT} corresponds to the KLT property while Equation \eqref{eq:TROY} corresponds to stability.

It is convenient to define the weights \(\mu_j = 1-\beta_j\). The pair \((\C^2, \sum_{j=1}^{d}\mu_j L_j)\) is KLT provided that \(\prod_{j=1}^{d}|\ell_j|^{-2\mu_j}\) is locally integrable around \(0\) in the standard Lebesgue sense. Note that \(\prod_{j=1}^{d}|\ell_j|^{-2\mu_j}\) is homogeneous and locally integrable away from the origin because \(\mu_j<1\). By taking spherical coordinates, the KLT condition amounts to
\begin{equation*}
	\int_0^1 t^{3-2\sum \mu_j} dt < \infty 
\end{equation*}
which is equivalent to \eqref{eq:KLT}. On the other hand, Equation \eqref{eq:TROY} is equivalent to \(\mu_i < \sum_{j \neq i} \mu_j\) for every \(i=1, \ldots, d\). This is the standard GIT stability for a weighted configuration of points in the Riemann sphere; and agrees with the `log K-stability' of the affine pair \((\C^2, \sum_j \mu_j L_j)\) polarized by dilations.

Our assumption on the cone angles \eqref{anglecond} automatically implies the KLT property, and, in terms of stability, means that we are in the strictly unstable case. The tangent cone in Theorem \ref{MAINTTHM} agrees with the one predicted algebraically by the theory of normalized volumes of valuations, and there is a destabilizing test configuration from \((\C^2, \sum_{j=1}^{d}\mu_j L_j)\) to \(\cone\) in the central fibre, see \cite[Section 4.1]{Li}.

We introduce the (KLT) open convex polytope 
\[ \mathcal{K} = \{ 0<\mu_j <1, \hspace{2mm} \sum_j \mu_j < 2 \} \subset \R^d ,\] 
a hypercube with its corner \((1,\ldots,1)\) chopped off. The  stable open subset \(\mathcal{S} \subset \mathcal{K}\) is cut out by \(d\) equations \( \cap_{i=1}^d \{ \mu_i < \sum_{j \neq i} \mu_j\}\). There are \(d\) hyperplane walls  \(\mathcal{W} = \cup_{i=1}^d\{\mu_i = \sum_{j\neq i} \mu_j\}\) that form the semistable locus. The strictly unstable region, \(\mathcal{U}\), the complement of \(\overline{\mathcal{S}}\), is open and has \(d\) components. We get a decomposition 
\[\mathcal{K} = \mathcal{S} \sqcup \mathcal{W} \sqcup \mathcal{U}\] 
into \(d+1\) open convex polytopes and \(d\) interior walls.
If \(\mu \in \mathcal{S}\) then there is a polyhedral K\"ahler cone metric with cone angles \(2\pi\beta_j\) along the complex lines \(L_j\), that comes as a lift of the spherical metric on the Riemann sphere and models the local behaviour of K\"ahler-Einstein metrics with these cone angles in general. On the other hand, Theorem \ref{MAINTTHM} provides models for the strictly \emph{unstable} region \(\mu \in \mathcal{U}\). It is an open problem to analyse in differential-geometric terms the case of equality in equation \eqref{anglecond}, that is the semi-stable regime \(\mu \in \mathcal{W}\).

While $\mathcal K$ parametrizes the set of KLT pairs, the boundary \(\p \mathcal{K}\) parametrizes log canonical pairs. It would be interesting to provide models for the asymptotic cuspidal behaviour of the corresponding K\"ahler-Einstein metrics for these pairs.

\subsection{Higher dimensions}
Of course one can take products of our Ricci flat metrics with a flat Euclidean factor to model convergence with multiplicity of conical divisors. We present here a different situation in which an irreducible hypersurface develops a normal crossing in the tangent cone limit.

Let \(\{x_1x_2=1\} \subset \C^2\). Recall Donaldson's Ricci flat model metrics \cite{Donaldson}, which solve
\begin{equation*}
(i \dd \phi)^2 = |1 - x_1 x_2|^{2\beta-2} dx_1 dx_2 \overline{dx_1 dx_2} .
\end{equation*}
Consider the two dimensional \(A_{p-1}\)-singularity given by
\begin{equation*}
D = \{ x_1 x_2 = z^p \} \subset \C^3 .
\end{equation*}
Fix \( \alpha_0 \in (1, p \beta /2)\).
The approximate solution ansatz is then 
\begin{equation*}
\omega = i \dd \left( |z|^2 + \gamma_1 (R \rho^{-\alpha_0}) R^2 + \gamma_2(R \rho^{-\alpha_0}) |z|^{p \beta} \phi (z^{-p/2} \cdot x) \right) .
\end{equation*}
Here \( \lambda \cdot x = (\lambda x_1, \lambda x_2)\) and \(\phi (z^{-p/2} \cdot x)\) is well defined because \(\phi(x)=\phi(-x)\). We have used  \(\rho^2 = |z|^2 + R^2\) and \(R^2 = |x_1|^{2\beta} + |x_2|^{2\beta}\), so \(\omega_{\C_{\beta} \times \C_{\beta}} = \frac{i}{2} \dd R^2\) is the tangent cone at infinity of \(i\dd \phi\).

Along the \(z\) axis, we have \( d(\cdot, 0) \approx |z|\) and the metric, in transverse directions is given by \(|z|^{p\beta} (i \dd \varphi)\). If \(p \beta > 2\) then the smoothing model \(i \dd \phi\) contracts faster than linearly. We expect we can perturb the approximate solution to a Ricci flat metric:

\emph{Assume that \(p \geq 3\) and \(2/p < \beta < 1\). Then there should be a Calabi-Yau metric \(\omega_{CY}\) in a neighbourhood of the origin in \(\C^3\) with cone angle \(2\pi\beta\) along \(D \setminus\{0\}\) and tangent cone at the origin equal to \(\C \times \C_{\beta} \times \C_{\beta}\).}



\bibliographystyle{amsplain}
\bibliography{bib3lines}

\end{document}